\newtheorem{Theorem}{Theorem}[section]
\newtheorem{Proposition}[Theorem]{Proposition}
\newtheorem{Lemma}[Theorem]{Lemma}
\newtheorem{Corollary}[Theorem]{Corollary}
\newtheorem{Remark}[Theorem]{Remark}
\newtheorem{Hypothesis}[Theorem]{Hypothesis}
\def\R{\mathbb R}
\def\N{\mathbb N}
\def\C{\mathbb C}
\def\E{\mathbb E}
\def\eps{\varepsilon}
\def\ds{\displaystyle}
\newcommand{\esssup}{\operatorname{ess\,sup}}
\title{\bf Time regularity for generalized Mehler semigroups}\date{}
\author[A. Lunardi]{Alessandra Lunardi}
\address{
Dipartimento di Scienze Matematiche, Fisiche e Informatiche\\
Universit\`a di Parma\\
Parco Area delle Scienze, 53/A\\
43124 Parma, Italy}
\email{alessandra.lunardi@unipr.it}
\subjclass[2010]{35B65, 35R15, 47D07}
\keywords{H\"older regularity,  generalized Mehler semigroups, Ornstein-Uhlenbeck semigroups, fractional diffusion}
\begin{document}

 \begin{abstract}  
We study continuity and H\"older continuity of $t\mapsto P_tf$, where $P_t$ is a generalized Mehler semigroup in  $C_b(X)$, the space of the continuous and bounded functions from a  Banach space $X$ to $\R$, and $f\in C_b(X)$. The generators $L$ of such semigroups are realizations of   a class  of differential and pseudo-differential operators, both in finite and in infinite dimension. Examples of operators $L$ to which this  theory is applicable include Ornstein-Uhlenbeck operators with fractional diffusion in finite dimension, and Ornstein-Uhlenbeck operators with associated strong-Feller semigroups,  in infinite dimension. 
 \end{abstract}

 \maketitle

\section{Introduction}
Let $X$ be a real  Banach space, and let $C_b(X)$ denote the space of the continuous and bounded functions from $X$ to $\R$, endowed with the sup norm. 
A generalized Mehler semigroup $P_t$  is a semigroup of bounded operators in  $C_b(X)$ that may be represented by 
 \begin{equation}
 \label{P_t}
 P_tf(x) = \int_{X} f(T_tx + y)\mu_t(dy), \quad t\geq 0, \; f\in C_b(X), \; x\in X, 
 \end{equation}
where $T_t$ is a strongly continuous semigroup of bounded operators  on  $X$, and $\{\mu_t:\; t\geq 0\}$ is a family of Borel probability measures in $X$ such that $\mu_0= \delta_0$ (the Dirac measure at $0\in X$), $t\mapsto \mu_t$ is weakly continuous in $[0, +\infty)$,  and  
 \begin{equation}
 \label{semigruppo}
 \mu_{t+s} = (\mu_t \circ T_s^{-1})\ast \mu_s, \quad t, s >0, 
  \end{equation}
that is in fact an algebraic necessary and sufficient for $P_t$ be a semigroup. 

They arise as transition semigroups of (weak or mild) solutions to stochastic differential equations such as  
\begin{equation}
\label{stoch}
dX(t)= AX(t)dt + dL_t, \quad  t>0; \; X(0)=x,
\end{equation}
where $A:D(A)\subset X\rightarrow X$ is the infinitesimal generator of $T_t$,  and $\{ L_t: \;t\geq0\}$  is a L\'evy process in $X$. 
Detailed discussions about several features of such  semigroups, and about connections with the stochastic differential equations \eqref{stoch}, are in \cite{BRS, ACM, DL, DLScS, FR, LR1, LR2, ZL, OR, ORW, SchSun} and in the papers quoted therein; see also the review paper \cite{Ap}. 

It is not hard to see that for every $f\in C_b(X)$ and $x\in X$ the function $(t,x)\mapsto P_tf(x)$ is continuous in $[0, +\infty)\times X$ (e.g., \cite[Lemma 2.1]{BRS}). But $P_t$ is not strongly continuous in general, as well as its restriction to the space $BUC(X)$   of the bounded and uniformly continuous functions from $X$ to $\R$. The {\it generator } $L$ of $P_t$ is defined through its resolvent, 
 \begin{equation}
 \label{L}
 R(\lambda, L)f (x) = \int_0^{\infty} e^{-\lambda t}P_tf(x)\,dt, \quad \lambda >0, \;f\in C_b(X), \; x\in X, 
  \end{equation}
however it is not the infinitesimal generator in the standard sense.

In this paper we study continuity and H\"older continuity of the function $[0, +\infty)\mapsto C_b(X)$, $t\mapsto P_tf$, for $f\in C_b(X)$. 

As a simple first result, we show that for every $f\in C_b(X)$, $t\mapsto P_tf$ is continuous if and only if  $\lim_{\lambda \to +\infty} \lambda R(\lambda, L)f =f$, if and only if
$f\in \overline{D(L)}$. So, $\overline{D(L)}$ is the subspace of strong continuity of $P_t$. This characterization is shared by (not necessarily strongly continuous) analytic semigroups in general Banach spaces, see e.g. \cite[Sect. 2.1]{L}. But in general, generalized Mehler semigroups are not analytic in $C_b(X)$ nor in $BUC(X)$. Even worse, they are not necessarily eventually norm continuous: see \cite{PvN} for Ornstein-Uhlenbeck semigroups. 

Then we prove that for every $\alpha\in (0, 1)$, 
$t\mapsto P_tf$ is  $\alpha$-H\"older continuous if and only if  $sup_{\lambda >0} \|\lambda^\alpha LR(\lambda, L)f\|_{\infty} <+\infty$, if and only if
$f$ is in  the real interpolation space $(C_b(X), D(L))_{\alpha, \infty}$. This characterization is shared by strongly continuous or analytic  semigroups of nonnegative type. 

In fact, both characterizations are proved for a larger class of semigroups, and precisely for all bounded semigroups $P_t$ such that $(t,x)\mapsto P_tf(x)$ belongs to $C([0 +\infty)\times X)$ for every $f\in C_b(X)$. 

Going back to generalized Mehler semigroups, if $P_t$ enjoys  suitable smoothing assumptions it is possible to provide more explicit descriptions of both $\overline{D(L)}$ and $(C_b(X), D(L))_{\alpha, \infty}$. 

Concerning $\overline{D(L)}$, if  $P_t$ maps $C_b(X)$ to $BUC(X)$ we have 
$$\overline{D(L)} = \{f\in BUC(X):\; \lim_{t\to 0}\|f(T_t\;\cdot )- f\|_{\infty} =0\}. $$

Concerning $(C_b(X), D(L))_{\alpha, \infty}$, we need better smoothing properties of $P_t$. Precisely, we assume that the following hypothesis holds. 

\begin{Hypothesis}
\label{Hyp:LR}
Each  $\mu_t$ is Radon, and Fomin differentiable along the range of $T_t$. There exist $C>0$, $\omega \in \R$   such that denoting by $\beta_{t,h}$ the Fomin derivative of $\mu_t$ along $T_th$, we have 
\begin{equation}
\label{ipotesi_intro}
\| \beta_{t,h}\|_{L^1(X, \mu_t)}\leq \frac{C e^{\omega t}}{t^\theta }\|h\|,  \quad t>0, \,h\in X. 
\end{equation}
\end{Hypothesis}
Then it is possible to show that $P_t$ is strong-Feller (namely, if $f$ is Borel measurable and bounded, then $P_tf$, still defined by  \eqref{P_t}, belongs to $C_b(X)$) and moreover it  maps $C_b(X)$ into the space $C^k_b(X)$ of the $k$ times Fr\'echet differentiable functions with bounded derivatives up to the order $k$,  for $k\in \N$, $k<1/\theta$, and there is $C_k>0$ such that 
\begin{equation}
\label{essenziali}\| P_tf\|_{C^k_b(X)} \leq \frac{C_k}{t^{k\theta}}  \|f\|_{\infty}, \quad f\in C_b(X),\;0<t\leq 1, \; k<1/\theta.
\end{equation}
Hypothesis  \ref{Hyp:LR} was introduced in \cite{LR}, where Schauder type theorems were proved. Estimates \eqref{essenziali} are used to prove that for every $\alpha\in (0, 1)$  we have
\begin{equation}
\label{inclusione}
(C_b(X), D(L))_{\alpha, \infty} \subset \left\{ \begin{array}{ll}
C^{\alpha /\theta}_b(X), &  \alpha /\theta\notin \N, 
\\
\\
Z^{\alpha /\theta}_b(X), &  \alpha /\theta \in \N, 
\end{array}\right. 
\end{equation}
with continuous embedding. Here, for $k\in \N$, $\sigma\in (0, 1)$, $C^{k+\sigma}_b(X)$ consists  of the functions  $f\in C^k_b(X)$ having $\sigma$-H\"older continuous $k$-th order derivative, and   $Z^{k}_b(X)$ is the Zygmund space  defined  in Sect. 2. Besides estimates  \eqref{essenziali}, the proof of  \eqref{inclusione} relies on results proved in \cite{LR} and  on the continuous embeddings 
$$(C_b(X), C^\beta_b(X))_{\alpha, \infty} \subset \left\{ \begin{array}{ll}
C^{\alpha \beta}_b(X), &  \alpha \beta \notin \N, 
\\
\\
Z^{\alpha \beta}_b(X), &  \alpha \beta \in \N, 
\end{array}\right. $$
valid for every Banach space $X$ and for every $\beta >0$, $\alpha\in (0, 1)$.  

From the representation formula \eqref{P_t} we see that $P_t$ is a contraction semigroup in all spaces $C^{\beta}_b(X)$, and also in all Zygmund spaces. Therefore  \eqref{inclusione} yields that if $t\mapsto P_tf(x)$ is $\alpha$-H\"older continuous uniformly with respect to $x$, then $x\mapsto P_tf(x)$ belongs to $C^{\alpha /\theta}_b(X)$ (or to $Z^{\alpha /\theta}_b(X)$), uniformly with respect to $t$. 

So, we have an extension  to the present general setting of the heuristic principle ``time regularity implies space regularity" that holds for semigroups associated to uniformly elliptic operators  with bounded regular coefficients in $\R^N$.  
However the converse does not hold; in other words  the embeddings in \eqref{inclusione} cannot be replaced by equalities, in general. Under a   further assumption on the moments of the measures $\mu_t$, namely
\begin{equation}
\label{caratt_interpIntro}
\exists\, \gamma , \,C>0:\quad \int_X \|x\|^\gamma \,\mu_t(dx) \leq Ct^{\gamma \theta}, \quad 0<t\leq 1, 
\end{equation}
we prove that  for every $\alpha\in (0, 1\wedge \theta)\cap (0, \gamma \theta]$ we have
\begin{equation}
(C_b(X), D(L))_{\alpha, \infty} = C^{\alpha /\theta}_b(X) \cap Y_\alpha , 
\label{main}
\end{equation}
with equivalence of the respective norms, where
$$Y_\alpha :=\left\{ f\in C_b(X):\; [f]_{Y_\alpha} := \sup_{t > 0} \frac{\|f(T_t \cdot) -f\|_{\infty}}{t^{\alpha}} <+\infty \right\}, \quad \|f\|_{Y_\alpha} = \|f\|_{\infty} +  [f]_{Y_\alpha}. $$
If in addition all  the measures $\mu_t$ are centered, namely $\mu_t(B) = \mu_t(-B)$ for every Borel set $B$, and $\theta <1$, the above equivalence holds also for all  $\alpha \in (\theta, 2\theta) \cap (0, \gamma \theta]$.    
 
Both in finite and in infinite dimension,  popular examples of generalized Mehler semigroups are the Ornstein-Uhlenbeck semigroups (\S \ref{Sect.OU}), in which case 
the measures $\mu_t$ are Gaussian. We recall that if $X=\R^N$, given any matrices $A$, $Q=Q^*\geq 0$, and setting $Q_t = \int_0^t e^{sA}Qe^{sA^*}\,ds$, the relevant Ornstein-Uhlenbeck semigroup $P_t$ is defined by \eqref{P_t} where  $T_t= e^{tA}$, and $\mu_t$ is the Gaussian measure with mean $0$ and covariance $Q_t$. The generator  $L$ is a realization of the operator $\mathscr Lu (x) = $ Tr $(QD^2u(x))/2 + \langle Ax, \nabla u(x)\rangle$. If in addition $Q>0$, estimates \eqref{essenziali} holds with $\theta =1/2$ and \eqref{caratt_interpIntro} holds for any $\gamma >0$. The above characterization of $(C_b(\R^N), D(L))_{\alpha, \infty} $ was already proved in \cite{DPL}; in the subsequent paper \cite{ABP} it was re-discovered that $t\mapsto P_tf$ is $\alpha$-H\"older continuous with values in $C_b(\R^N)$  for $\alpha <1/2$ iff $f\in C^{2\alpha}_b(\R^N) \cap Y_\alpha$. A characterization of $(C_b(\R^N), D(L))_{\alpha, \infty} $ is available also in the case that Det$\,Q =0$ but $\mathscr L$ is hypoelliptic  (\cite{L0}). 

Still for Ornstein-Uhlenbeck semigroups, in infinite dimension
sufficient conditions for Hypothesis \ref{Hyp:LR}  to hold are well known in the case that $X$ is a separable Hilbert space; for any 
 $\theta \geq 1/2$ there are   examples such that \eqref{ipotesi_intro} is satisfied (e.g., \cite{C,DPZbrutto,ABP}).   Instead, it is not clear whether \eqref{caratt_interpIntro} holds for some $\gamma >0$. Therefore, we can prove only the embeddings \eqref{inclusione}. 

Nontrivial new examples of generators of generalized Mehler semigroups satisfying both Hypothesis \ref{Hyp:LR} and \eqref{caratt_interpIntro}
are Ornstein-Uhlenbeck operators with fractional diffusion in finite dimension (\S \ref{Sect.OU_diff_fraz}), such as 
$$(\mathscr L u)(x) =  \frac{1}{2}( {\text Tr}^s(QD^2u) )(x)  - \langle Ax, \nabla u(x)\rangle , \quad x\in \R^N, $$
with $s\in (0,1)$ and $Q$, $A$ matrices such that $Q=Q^*>0$. Here Tr$^s(QD^2)$ is the pseudo-differential operator with symbol $- \langle Q\xi, \xi\rangle^s$. In this case \eqref{ipotesi_intro} holds with $\theta = 1/(2s)$, the measures $\mu_t$ are absolutely continuous with respect to the Lebesgue measure, \eqref{caratt_interpIntro} holds for every $\gamma < 2s$, and we obtain
$$(C_b(X), D(L))_{\alpha, \infty} = C^{2s \alpha}_b(\R^N) \cap Y_\alpha , \quad \alpha \in (0,1), \; \alpha \neq 1/(2s). $$
%

\section{Interpolation in spaces of continuous and bounded functions}

\subsection{Function spaces}

Let $X$, $Y$ be Banach spaces. 

By $\mathcal B_b(X; Y)$ and $C_b(X; Y)$ we denote the space of all bounded Borel measurable (resp. bounded  continuous)   functions $F:X\mapsto Y$, endowed with the sup norm $\|F \|_{\infty}:= \sup_{x\in X} \|F(x)\|_Y$. 
If $X=\R$ we set $\mathcal B_b(X; \R) := \mathcal B_b(X)$ and  $C_b(X; \R):= C_b(X)$. 

For $\alpha\in (0, 1)$ we denote by $C^{\alpha}_b(X; Y)$ the space of the bounded and $\alpha$-H\"older continuous functions $F:X\mapsto Y$, 
endowed with the H\"older norm $\|F \|_{C^{\alpha}_b(X; Y)} = \|F\|_{\infty} + [F]_{C^{\alpha}(X; Y)}$, where 
$[F]_{C^{\alpha}(X; Y)}:= \sup_{x\in X, \, h\in X\setminus \{0\}} \|F(x+h)- F(x)\|_Y\|h\|_{X}^{-\alpha}$. 

For $\alpha =1$ we will not consider the Lipschitz condition, but   a weaker one. We set 
%
%
$$Z^1_b(X, Y) := 
 \bigg\{F\in C_b(X; Y):  \, [F]_{Z^1(X, Y)} := \sup_{x, h\in X, \,h\neq 0} \frac{\| F(x+2h)-2F(x+h) + F(x)\|_Y}{ \|h\|_{X}} <+\infty \bigg\}.  $$
The space $Z^1(X; Y)$ is called {\em Zygmund space}, and it is endowed with the norm
$$\|F\|_{Z^1_b(X, Y)} :=  \sup_{x\in X}\|F(x)\|_Y + [F]_{Z^1(X, Y)} .  $$

For every $n\in \N$ we denote by $C^k_b(X)$ the space of the bounded and $n$ times continuously Fr\'echet differentiable functions $f:X\mapsto \R$ with bounded Fr\'echet derivatives  up to the order $k$.  Its norm is
\begin{equation}
\label{normaC^k}
\|f\|_{C^{k}_{b}(X)} := \|f\|_{\infty} + \sum_{j=1}^k \sup_{x\in X} \|D^jf(x)\|_{\mathcal L^j(X)}, 
\end{equation}
where $\mathcal L^j(X)$ is the space of the $k$-linear continuous functions from $X^j$ to $\R$, endowed with the norm  $\|T\|_{ {\mathcal L}^{j}(X)} := \sup\bigg\{ \frac{|T(h_1, \ldots, h_j)|}{\|h_1\|\cdots \|h_j\|}: \; h_i\in X\setminus \{0\} \bigg\}. $

For $\sigma\in (0, 1)$ and $k\in \N$ we set
\begin{equation}
\label{def:higherHolder}
\begin{array}{c}
C_b^{\sigma +k}(X) := \{ f\in C_b^k(X): \: D^kf \in C^{\sigma}(X, \mathcal L^k(X))\}, 
\\
\\
\|f\|_{C^{\sigma +k}_b(X)} := \|f\|_{C_b^{k}(X)} + [D^kf]_{C^{\sigma}(X, \mathcal L^k(X))},
\end{array}
\end{equation}
and  for $k\in \N$, $k\geq 2$, the higher order Zygmund spaces are defined by 
\begin{equation}
\label{def:higherZygmund}
\begin{array}{c}
Z^{k}_b(X) := \{ f\in C^{k-1}_b(X): \: D^{k-1}f \in Z^{1}(X, \mathcal L^{k-1}(X))\}, 
\\
\\
\|f\|_{Z^{k}_b(X)} := \|f\|_{C^{k-1}_b(X)} + [D^{k-1}f]_{Z^{1}(X, \mathcal L^{k-1}(X))}.
\end{array}
\end{equation}
%

\subsection{Interpolation}

We recall that if $E$, $F$ are   Banach spaces and $F$ is  continuously embedded in $E$, the real interpolation space $(E,F)_{\alpha, \infty}$ is defined by 
$$(E,F)_{\alpha, \infty} = \{ f\in E:\; \|f\|_{(E,F)_{\alpha, \infty}} := \sup_{\xi>0} \xi^{-\alpha} K(\xi, f, E, F)<+\infty \}$$
where
$$ K(\xi, f, E, F) := \inf \{ \|a\|_E + \xi\|b\|_F:\; f= a+b, \; a\in E, \; b\in F \}, \quad \xi>0, \; f\in E. $$
There are several equivalent characterizations of real interpolation spaces. In the following we shall use the next one (e.g., \cite[Thm. 1.5.3]{T}). 

\begin{Proposition}
\label{medie}
$(E,F)_{\alpha, \infty} $ consists of the elements $f\in E$ such that $f= \int_0^{\infty} \frac{u(t)}{t} \,dt$, where $u\in C((0, +\infty);F)$ is such that 
 $t\mapsto   t^{1-\alpha}u(t)\in L^{\infty}((0, +\infty); F)$ and $t\mapsto  t^{-\alpha}u(t)\in L^{\infty}((0, +\infty); E)$. 
The norm 
$$f\mapsto \inf\left\{ \esssup_{t>0} \|t^{1-\alpha}u(t)\|_F + \esssup_{t>0}\|  t^{-\alpha}u(t)\|_E:\; f= \int_0^{\infty} \frac{u(t)}{t} \,dt \right\}$$
is equivalent to the norm of $(E,F)_{\alpha, \infty} $. 
\end{Proposition}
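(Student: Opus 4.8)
The plan is to prove both inclusions directly from the $K$-functional definition of $(E,F)_{\alpha,\infty}$ recalled above, and then to pass to infima over representations to get the equivalence of norms. Two preliminary observations: since $F$ is continuously embedded in $E$ one has $E+F=E$ with an equivalent norm, so all integrals and series below may be read in $E$; and the two conditions on $u$ amount to a single two-sided bound $M(u):=\sup_{t>0}\max\{t^{-\alpha}\|u(t)\|_E,\ t^{1-\alpha}\|u(t)\|_F\}<+\infty$, which together with $0<\alpha<1$ makes $t\mapsto u(t)/t$ Bochner integrable near $t=0$ in $E$ (using $\alpha>0$) and near $t=+\infty$ in $F$ (using $\alpha<1$), so that $\int_0^{\infty}u(t)\,dt/t$ is a well defined element of $E$. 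Denote by $\|\cdot\|_{\mathrm{mean}}$ the norm appearing in the statement.

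The inclusion ``every mean-representable $f$ belongs to $(E,F)_{\alpha,\infty}$'' is the easy one: a representation already furnishes an optimal family of $K$-decompositions. Given $f=\int_0^{\infty}u(t)\,dt/t$ and $\xi>0$, split $f=a_\xi+b_\xi$ with $a_\xi:=\int_0^{\xi}u(t)\,dt/t\in E$ and $b_\xi:=\int_\xi^{\infty}u(t)\,dt/t\in F$; the bound $M(u)$ gives $\|a_\xi\|_E\le M(u)\,\xi^{\alpha}/\alpha$ and $\|b_\xi\|_F\le M(u)\,\xi^{\alpha-1}/(1-\alpha)$, hence $K(\xi,f,E,F)\le\|a_\xi\|_E+\xi\|b_\xi\|_F\le c_\alpha\,M(u)\,\xi^{\alpha}$ uniformly in $\xi$. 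Thus $\|f\|_{(E,F)_{\alpha,\infty}}\le c_\alpha M(u)$, and taking the infimum over admissible $u$ gives $\|f\|_{(E,F)_{\alpha,\infty}}\le c_\alpha\|f\|_{\mathrm{mean}}$.

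The converse inclusion carries the substance. Given $f\in(E,F)_{\alpha,\infty}$, put $K_0:=\|f\|_{(E,F)_{\alpha,\infty}}$, so $K(2^n,f,E,F)\le K_0\,2^{n\alpha}$ for all $n\in\Z$; for each $n$ I would pick a near-optimal splitting $f=a_n+b_n$ with $\|a_n\|_E+2^n\|b_n\|_F\le 2K_0\,2^{n\alpha}$. Since $\alpha>0$, $a_n\to 0$ in $E$ as $n\to-\infty$, and since $\alpha<1$, $b_n\to 0$ in $F$ (hence $a_n\to f$ in $E$) as $n\to+\infty$. The increments $w_n:=a_{n+1}-a_n=b_n-b_{n+1}$ then lie in $F$, satisfy $\|w_n\|_E\le c\,K_0\,2^{n\alpha}$ and $\|w_n\|_F\le c\,K_0\,2^{n(\alpha-1)}$, and telescoping gives $f=\sum_{n\in\Z}w_n$ with the positive tail converging in $F$ and the negative one in $E$. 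To upgrade this discrete decomposition to a \emph{continuous} $F$-valued representing function I would mollify it in the variable $\tau=\log_2 t$: fix a continuous nonnegative $\varphi$ with $\mathrm{supp}\,\varphi\subset(-1,1)$ and $\int_{\R}\varphi=1$, and set $u(t):=c_\varphi\sum_{n\in\Z}\varphi(\log_2 t-n)\,w_n$, the constant $c_\varphi$ chosen so that $\int_0^{\infty}u(t)\,dt/t=\sum_{n\in\Z}w_n$. At each $t$ the sum has at most two nonzero terms, so $u\in C((0,+\infty);F)$; on the dyadic block $t\asymp 2^m$ only the $w_n$ with $|n-m|\le 1$ occur, whence $\|u(t)\|_E\le c\,K_0\,t^{\alpha}$ and $\|u(t)\|_F\le c\,K_0\,t^{\alpha-1}$, i.e.\ $M(u)\le c\,K_0$; and $\int_0^{\infty}u(t)\,dt/t=\sum_{n}w_n=f$. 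Hence $\|f\|_{\mathrm{mean}}\le c\,K_0$, which together with the first step gives the equality with equivalence of norms.

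I expect the only real obstacle to be this last point, namely producing a representing function that is genuinely \emph{continuous} into $F$ rather than merely bounded and a priori non-measurable. The dyadic discretization is precisely what bypasses any measurable-selection difficulty, and the fixed-kernel mollification in $\log t$ is what restores continuity while preserving both one-sided decay rates. Everything else is routine bookkeeping with the $K$-functional; I would, however, keep careful track that $\alpha\in(0,1)$ is genuinely used twice — $\alpha>0$ for convergence and integrability near $t=0$ in the $E$-norm, and $\alpha<1$ for convergence and integrability near $t=+\infty$ in the $F$-norm.
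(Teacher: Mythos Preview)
Your argument is correct. Note, however, that the paper does not supply its own proof of this proposition: it is stated as a known characterization and attributed to \cite[Thm.~1.5.3]{T} (Triebel). So there is no ``paper's proof'' to compare against; you have written out in full what the paper chose to quote.

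Your approach is the standard one, and both directions are handled cleanly. The easy direction (a mean representation yields a $K$-decomposition by splitting the integral at $\xi$) is routine. For the substantive direction you correctly identify and resolve the only genuine difficulty: a raw choice of near-optimal decompositions $f=a(\xi)+b(\xi)$ for each $\xi>0$ need not produce a measurable, let alone continuous, $F$-valued function. Your remedy---discretize along the dyadic scale $\xi=2^n$, telescope to get $w_n=a_{n+1}-a_n=b_n-b_{n+1}\in F$ with the right two-sided bounds, then smear via a compactly supported continuous kernel in $\log_2 t$---is exactly the device that makes the construction work, and your bookkeeping (the use of $\alpha>0$ near $t=0$, of $\alpha<1$ near $t=+\infty$, the local finiteness of the sum, and the choice of $c_\varphi$ so that the integral reproduces $\sum_n w_n=f$) is accurate.

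One small cosmetic point: when you assert $\int_0^\infty u(t)\,dt/t=\sum_n w_n$, it is worth saying explicitly that the interchange of sum and integral is justified because on each dyadic block only finitely many terms of the sum are nonzero, so the series of integrals is exactly $\sum_n w_n$ times a fixed constant; you have the ingredients for this but leave it implicit.
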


Moreover, given $\theta\in (0, 1)$ and three Banach spaces $G\subset F\subset E$, with continuous embeddings, we say that $F$ belongs to the class $J_\theta$ between $E$ and $G$ if there exists $C>0$ such that 
$$\|x\|_F\leq C\|x\|_G^\theta \|x\|_E^{1-\theta}, \quad x\in G. $$
In this case  the Reiteration Theorem (e.g., \cite[Thm. 1.10.2]{T}, \cite[Thm. 1.23(ii)]{L1}) yields, for every $\alpha\in (0, 1)$, $\alpha\neq \theta$, 
\begin{equation}
\label{reiterazione}
(E,G)_{\alpha , \infty} \subset \left\{ \begin{array}{ll} (E,F)_{\alpha /\theta, \infty}, & \text{if}\; \alpha <\theta, 
\\
\\
(F, G)_{ (\alpha-\theta)/(1-\theta), \infty}, & \text{if}\;  \alpha >\theta , 
\end{array}\right.
\end{equation}
with continuous embeddings. 

From now on we take $E= C_b(X)$, and $F= C^\beta_b(X)$, with $\beta >0$. 
Next statements are far from surprising. However, the known proofs  in the case $X=\R^N$ 
do not seem to be immediately extendable  to the infinite dimensional case, and therefore we provide   independent  proofs.

\begin{Lemma}
\label{C^k}
For every $k\in \N$, $k\geq 2$, there exists $C_k>0$ such that 
\begin{equation}
\label{Jintero}
\|D^hf\|_{L^\infty(X, \mathcal L^h(X))} \leq C_k  \|f\|_\infty^{1-h/k} \|D^kf\|_{L^\infty(X, \mathcal L^k(X))} , \quad h=1, \ldots, k-1, \; f\in C^k_b(X). 
\end{equation}
Similarly, for every noninteger $\beta >1$, $\beta= k+\sigma $ with $k\in \N$ and $\sigma\in (0, 1)$ there exists $C_\beta>0$ such that 
\begin{equation}
\label{Jnonintero}
\|D^hf\|_{L^\infty(X, \mathcal L^h(X))} \leq C_\beta \|f\|_\infty^{1-h/\beta} [D^kf]_{C^\sigma (X, \mathcal L^k(X))} , \quad h=1, \ldots, k, \; f\in C^\beta_b(X). 
\end{equation}
%
\end{Lemma}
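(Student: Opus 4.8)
The plan is to reduce everything to the one-dimensional interpolation inequality for Hölder spaces on the real line, applied along line segments. Fix $f\in C^k_b(X)$ (resp. $C^\beta_b(X)$) and a point $x\in X$ together with a direction $v\in X$ with $\|v\|=1$. Consider the scalar function $g_{x,v}(s):=f(x+sv)$ for $s\in\R$. Since the Fréchet derivatives of $f$ are bounded, $g_{x,v}\in C^k_b(\R)$ with $g_{x,v}^{(j)}(s)=D^jf(x+sv)[v,\dots,v]$, so $\|g_{x,v}^{(j)}\|_\infty\le \|D^jf\|_\infty$; and in the noninteger case $[g_{x,v}^{(k)}]_{C^\sigma(\R)}\le [D^kf]_{C^\sigma}$. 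Conversely, I would recover the full $h$-linear derivative norm from directional data: for $T\in\mathcal L^h(X)$ symmetric, $\|T\|_{\mathcal L^h(X)}$ is comparable to $\sup_{\|v\|=1}|T[v,\dots,v]|$ by the polarization identity (with a dimension-free constant depending only on $h$), and $D^hf(x)$ is symmetric since $f\in C^h_b$. Hence $\|D^hf(x)\|_{\mathcal L^h(X)}\le c_h\sup_{\|v\|=1}|g_{x,v}^{(h)}(0)|$, and it suffices to bound $|g_{x,v}^{(h)}(0)|$ uniformly.

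The core input is the classical Landau–Kolmogorov / interpolation inequality on $\R$: for $h=1,\dots,k-1$ there is $C_k$ with $\|\varphi^{(h)}\|_{L^\infty(\R)}\le C_k\|\varphi\|_{L^\infty(\R)}^{1-h/k}\|\varphi^{(k)}\|_{L^\infty(\R)}^{h/k}$ for $\varphi\in C^k_b(\R)$, and the analogous statement with $\|\varphi^{(k)}\|_\infty$ replaced by $[\varphi^{(k)}]_{C^\sigma(\R)}$ and exponent $h/\beta$ in the noninteger case (this is just $(C_b(\R),C^\beta_b(\R))_{h/\beta,\infty}$ reindexed, or can be proved directly by a Taylor/difference-quotient argument choosing the scale optimally). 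Applying this to $\varphi=g_{x,v}$ and using the bounds of the previous paragraph gives
\[
|g_{x,v}^{(h)}(0)|\le C_k\,\|f\|_\infty^{1-h/k}\,\|D^kf\|_\infty^{h/k},
\]
and similarly in the noninteger case. Taking the supremum over $\|v\|=1$ and over $x\in X$, then reinserting the polarization constant $c_h$, yields \eqref{Jintero} and \eqref{Jnonintero} with new constants depending only on $k$ (resp. $\beta$). Note the exponent on $\|D^kf\|_\infty$ in the final statement is written simply as $1$ rather than $h/k$; this is harmless because $\|D^kf\|_\infty\le \|f\|_{C^k_b(X)}$ is bounded relative to the norm, but to match \eqref{Jintero} verbatim one absorbs $\|D^kf\|_\infty^{1-h/k}$ using $\|D^kf\|_\infty\le C\|f\|_{C^k_b}$ — or, more cleanly, one keeps the sharp exponents and observes they imply the stated inequalities. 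I would phrase the lemma's proof with the sharp exponents and remark that the displayed form follows a fortiori.

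The main obstacle is making sure the reduction to one dimension is genuinely dimension-free. Two points need care: first, the polarization identity expressing a symmetric $h$-linear form through its diagonal values has constants depending only on $h$ (not on $\dim X$), which is standard but should be cited or stated; second, one must check that $D^hf(x)$ really is symmetric and that $s\mapsto f(x+sv)$ inherits the right regularity — for the noninteger case this means verifying $[D^kf]_{C^\sigma}<\infty$ transfers to $[g_{x,v}^{(k)}]_{C^\sigma(\R)}<\infty$ with the same constant, which follows immediately since $g_{x,v}^{(k)}(s)-g_{x,v}^{(k)}(s')=(D^kf(x+sv)-D^kf(x+s'v))[v,\dots,v]$ and $\|(s-s')v\|=|s-s'|$. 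Once these are in place the argument is routine.
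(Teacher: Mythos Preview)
Your approach is correct and genuinely different from the paper's. The paper first establishes the base case $\|Df\|_{L^\infty(X,X^*)} \le C\,\|f\|_\infty^{\sigma/(\sigma+1)}[[Df]]_\sigma^{1/(\sigma+1)}$ by a direct difference-quotient estimate along a single direction $h$ (essentially your one-dimensional computation for $h=1$), and then bootstraps by induction on $k$: applying the base case to the $\mathcal L^{k-1}(X)$-valued function $D^{k-1}f$ yields $\|D^kf\|_\infty \le K_k\,\|D^{k-1}f\|_\infty^{1/2}\|D^{k+1}f\|_\infty^{1/2}$, which is combined recursively with the inductive hypothesis to reach arbitrary $h<k$; the noninteger case is then deduced from the integer one together with the analogous $C^{k+\sigma}$ base estimate. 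Your route instead restricts to lines once and for all, invokes the full one-dimensional Landau--Kolmogorov inequality (and its H\"older variant) as a black box, and recovers the multilinear operator norm via the polarization bound $\|T\|_{\mathcal L^h(X)}\le (h^h/h!)\sup_{\|v\|=1}|T(v,\dots,v)|$ for symmetric $h$-linear $T$. The paper's argument is entirely self-contained but computationally heavier; yours is shorter and more conceptual, at the price of importing two standard facts (the 1D inequality and the dimension-free polarization constant). Both give the sharp exponent $h/k$ (resp.\ $h/\beta$) on the top-order term; the exponent $1$ appearing in the displayed inequalities \eqref{Jintero}--\eqref{Jnonintero} is evidently a typo, as the paper's own proof and the subsequent corollary on the $J_{h/\beta}$ class confirm, so your closing attempt to reconcile the two forms is unnecessary and somewhat garbled --- just state and prove the sharp inequality.
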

\begin{proof}
Let us prove a basic estimate, for $f\in C^{1+\sigma}_b(X)$ with $\sigma \in (0, 1]$. 
For every $x\in X$, $h\in X$ with $\|h\|=1$ and $t>0$,   we have 
 $$\begin{array}{lll}
 | Df(x)(h) | &  \leq  & \ds \left| Df(x)(h) - \frac{ f(x+th) - f(x)}{t}\right| +  \left|\frac{ f(x+th) - f(x)}{t}\right|
 \\
 \\
 & \leq & \ds \frac{1}{t}  \int_0^t |Df(x)(h) - Df(x+ \tau h)(h)|\,d\tau +   \frac{2}{t}  \|f\|_{\infty}
 \\
 \\ 
  & \leq & \ds  [[Df]]_{\sigma } \frac{t^\sigma }{1+\sigma} +   \frac{2}{t} \|f\|_{\infty}, 
 \end{array}$$
where $ [[Df]]_{\sigma }:=  [Df]_{C^{\sigma}(X, X^*)}$ if $\sigma \in (0, 1)$, $ [[Df]]_{\sigma }:= \|D^2f\|_{L^\infty (X, \mathcal L^2(X))}$ if $\sigma =1$. 
In both cases,  for every $x\in X$, 
$$\|Df(x)\|_{X^*} \leq  [[Df]]_{\sigma } \frac{t^\sigma }{1+\sigma} +   \frac{2}{t} \|f\|_{\infty}, \quad  t>0. $$
Taking the minimum of the right hand side over $t>0$ (or else, choosing $t=  (\|f\|_{\infty}/ [[Df]]_{\sigma })^{1/(1+\sigma)}$),  we see that there exists $C $ such that 
\begin{equation}
\label{J1} \|Df(x)\|_{X^*} \leq C(  [[Df]]_{\sigma }^{1/(\sigma +1)} (\|f\|_{\infty})^{1-1/(\sigma +1)}, \quad x\in X, 
\end{equation}
which proves \eqref{Jintero} for $k=2$ and  \eqref{Jnonintero} for $\beta \in (1,2)$.  

Estimate \eqref{J1} is readily extended as follows: for every $k\in \N$,  and $\sigma \in (0, 1)$ 
 there exists $K_{k,\sigma} >0$ such that 
\begin{equation}
\label{J_k}
\|D^kf\|_{L^\infty (X, \mathcal L^k(X))} \leq   K_{k,\sigma} (\|D^{k-1}f\|_{L^\infty (X, \mathcal L^{k-1}(X))}^{1-1/(\sigma +1)} [D^kf]_{C^{ \sigma } (X, \mathcal L^k(X)) )})^{1/(\sigma +1)} , \quad f\in C^{k+\sigma}_b(X), 
\end{equation}
while for $\sigma =1$  there exists $K_{k} >0$ such that 
\begin{equation}
\label{J_kintero}
\|D^kf\|_{L^\infty (X, \mathcal L^k(X))} \leq   K_k  (\|D^{k-1}f\|_{L^\infty (X, \mathcal L^{k-1}(X))}^{1/2} \|D^{k+1}\|_{L^\infty (X, \mathcal L^{k+1}(X))})^{1/2} , \quad f\in C^{k+1}_b(X), 
\end{equation}
(for $k\geq 2$  it is sufficient to replace $f$ by $D^{k-1}f$ and argue as above).

Now we are ready to prove \eqref{Jintero}, by recurrence. We just proved that  \eqref{Jintero} holds for $k=2$. Assume that  \eqref{Jintero} holds for some $k\geq 2$. Using first \eqref{J_kintero} and then the recurrence assumption, for every $f\in C^{k+1}_b(X)$ we have
$$\begin{array}{lll}\|D^kf\|_{L^\infty (X, \mathcal L^k(X))}&  \leq  & K_k \|D^{k-1}f\|_{L^\infty (X, \mathcal L^{k-1}(X))}^{1/2}  \|D^{k+1}f\|_{L^\infty (X, \mathcal L^{k+1}(X))}^{1/2} 
\\
\\
& \leq & K_k  (C_{k}  \|f\|_{\infty} ^{1/k} 
\|D^kf\|_{L^\infty (X, \mathcal L^k(X))}^{(k-1)/k})^{1/2}  \|D^{k+1}f\|_{L^\infty (X, \mathcal L^{k+1}(X))}^{1/2} 
\\
\\
& = &  K_k  C_{k}^{1/2}   \|f\|_{\infty} ^{1/(2k)} \|D^kf\|_{L^\infty (X, \mathcal L^k(X))}^{(k-1)/(2k)}  \|D^{k+1}f\|_{L^\infty (X, \mathcal L^{k+1}(X))}^{1/2} 
\end{array}$$
so that 
\begin{equation}
\label{k-(k+1)}
\|D^kf\|_{L^\infty (X, \mathcal L^k(X))}\leq  ( K_k  C_{k}^{1/2})^{2k/(k+1)}   \|f\|_{\infty} ^{1/(k+1)}  \|D^{k+1}f\|_{L^\infty (X, \mathcal L^{k+1}(X))}^{k/(k+1)} . 
\end{equation}
For $h<k$ we use the recurrence assumption and then \eqref{k-(k+1)}, estimating
$$\begin{array}{lll}\|D^hf\|_{L^\infty (X, \mathcal L^h(X))} & \leq &  C_k    \|f\|_{\infty} ^{1-h/k} \|D^kf\|_{L^\infty (X, \mathcal L^k(X))}^{h/k}
\\
\\
& \leq & C_k( K_k  C_{k}^{1/2})^{2k/(k+1)}     \|f\|_{\infty} ^{1-h/k}  (\|f\|_{\infty} ^{1/(k+1)}  \|D^{k+1}f\|_{L^\infty (X, \mathcal L^{k+1}(X))}^{k/(k+1)})^{h/k}
\\
\\
& =  & C_k^{1+h/(k+1)} K_k^{2h/(k+1)}     \|f\|_{\infty} ^{1-h/(k+1)}  \|D^{k+1}f\|_{L^\infty (X, \mathcal L^{k+1}(X))}^{h/(k+1)}. 
\end{array}$$
Such estimates and \eqref{k-(k+1)} yield that   \eqref{Jintero} holds for $k+1$. 

Now we prove that   \eqref{Jnonintero} holds. Let $\beta= k+\sigma $ with $k\in \N$, $k\geq 2$,  and $\sigma\in (0, 1)$, and let $f\in C^{\beta}_b(X)$. Estimates \eqref{J_k} and   \eqref{Jintero} with $h= k-1$ 
yield
$$\begin{array}{lll}\|D^kf\|_{L^\infty (X, \mathcal L^k(X))} &\leq  & K_{k,\sigma} (\|D^{k-1}f\|_{L^\infty (X, \mathcal L^{k-1}(X))}^{1-1/(\sigma +1)} [D^kf]_{C^{ \sigma } (X, \mathcal L^k(X)) )})^{1/(\sigma +1)}
\\
\\
&  \leq &
 K_{k,\sigma} ( C_{k}\|f\|_{\infty}^{1/k} \|D^kf\|_{L^\infty (X, \mathcal L^k(X))}^{1-1/k})^{1-1/(\sigma +1)} [D^kf]_{C^{ \sigma } (X, \mathcal L^k(X))})^{1/(\sigma +1)} \end{array}$$
so that 
$$\|D^kf\|_{L^\infty (X, \mathcal L^k(X))} \leq ( K_{k,\sigma}  C_{k}^{1-1/(\sigma +1)})^{k(\sigma +1)/\beta} \|f\|_{\infty}^{\sigma/\beta}  [D^kf]_{C^{ \sigma } (X, \mathcal L^k(X))}^{k/\beta}$$
which gives \eqref{Jnonintero} with $h=k$. For $h<k$ we use \eqref{Jintero} and the above estimate, to get 
$$\begin{array}{lll}\|D^hf\|_{L^\infty (X, \mathcal L^h(X))} & \leq & C_k \|f\|_{\infty}^{1-h/k} \|D^kf\|_{L^\infty (X, \mathcal L^k(X))}^{h/k}
\\
\\
& \leq & C_k ( K_{k,\sigma}  C_{k}^{1-1/(\sigma +1)})^{k(\sigma +1)/\beta}  \|f\|_{\infty}^{1-h/\beta}   [D^kf]_{C^{ \sigma } (X, \mathcal L^k(X))}^{h/\beta},\end{array}$$
and  \eqref{Jnonintero} is proved. 
 \end{proof}

\begin{Corollary}
\label{Cor:J}
For every $k\in \N$ the mapping $f\mapsto \|f\|_{\infty} + \|D^kf\|_{L^\infty (X, \mathcal L^k(X))}$ is a norm in $C^k_b(X)$, equivalent to the norm \eqref{normaC^k}, 
and for every non integer $\beta >0$, $\beta = k+\sigma$ with $k\in \N$, $ \sigma \in (0, 1)$,  the mapping $f\mapsto \|f\|_{\infty} + [D^kf]_{C^\sigma (X, \mathcal L^k(X))}$ is a norm in $C^\beta_b(X)$, equivalent to the norm \eqref{def:higherHolder}. 
Moreover, for every $\beta >1$ and for every $h\in \N$, $h<\beta$, the space $C^h_b(X)$ belongs to the class $J_{h/\beta}$ between $C_b(X)$ and $C^\beta_b(X)$. 
\end{Corollary}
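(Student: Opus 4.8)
The plan is to derive Corollary \ref{Cor:J} directly from Lemma \ref{C^k}, which already contains all the analytic content; what remains is essentially bookkeeping about equivalence of norms and verification of the definition of the class $J_\theta$.

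First I would handle the integer case. Fix $k\in\N$. One inequality is trivial: the norm \eqref{normaC^k} dominates $\|f\|_\infty+\|D^kf\|_{L^\infty(X,\mathcal L^k(X))}$ since the latter only retains two of the terms in the sum. For the reverse inequality, I would invoke \eqref{Jintero}: for each $h=1,\dots,k-1$ the quantity $\|D^hf\|_{L^\infty(X,\mathcal L^h(X))}$ is bounded by $C_k\|f\|_\infty^{1-h/k}\|D^kf\|_{L^\infty(X,\mathcal L^k(X))}^{h/k}$, and by Young's inequality $a^{1-h/k}b^{h/k}\le a+b$ this is at most $C_k(\|f\|_\infty+\|D^kf\|_{L^\infty(X,\mathcal L^k(X))})$. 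Summing over $h$ gives the control of the full norm \eqref{normaC^k} by a constant multiple of $\|f\|_\infty+\|D^kf\|_{L^\infty(X,\mathcal L^k(X))}$. For $k=1$ there is nothing to prove. The noninteger case $\beta=k+\sigma$ is entirely parallel: one inequality is again trivial from the definition \eqref{def:higherHolder}, and for the other I would apply \eqref{Jnonintero}, which bounds every $\|D^hf\|_{L^\infty}$ for $h=1,\dots,k$ by $C_\beta\|f\|_\infty^{1-h/\beta}[D^kf]_{C^\sigma}^{h/\beta}\le C_\beta(\|f\|_\infty+[D^kf]_{C^\sigma})$ via Young's inequality, and then sum.

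Next I would address the $J_{h/\beta}$ assertion. By the definition recalled in the excerpt, I must produce a constant $C>0$ with $\|f\|_{C^h_b(X)}\le C\|f\|_{C^\beta_b(X)}^{h/\beta}\|f\|_{C_b(X)}^{1-h/\beta}$ for all $f\in C^\beta_b(X)$. By the norm-equivalence just established (in both the integer and noninteger cases) it suffices to prove this with $C^h_b(X)$ replaced by the equivalent norm $\|f\|_\infty+\|D^hf\|_{L^\infty(X,\mathcal L^h(X))}$ and $C^\beta_b(X)$ by its equivalent norm (which is $\|f\|_\infty+\|D^\beta f\|_{L^\infty}$ if $\beta\in\N$, or $\|f\|_\infty+[D^{\lfloor\beta\rfloor}f]_{C^\sigma}$ otherwise). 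For the top-order term, Lemma \ref{C^k} gives exactly $\|D^hf\|_{L^\infty}\le C\|f\|_\infty^{1-h/\beta}(\text{top-order seminorm of }f)^{h/\beta}\le C\|f\|_{C_b(X)}^{1-h/\beta}\|f\|_{C^\beta_b(X)}^{h/\beta}$. For the $\|f\|_\infty$ term I would simply write $\|f\|_\infty=\|f\|_\infty^{1-h/\beta}\|f\|_\infty^{h/\beta}\le\|f\|_{C_b(X)}^{1-h/\beta}\|f\|_{C^\beta_b(X)}^{h/\beta}$. Adding the two estimates yields the desired interpolation inequality with a new constant.

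The only genuine subtlety — and the step I expect to need the most care — is ensuring the argument is consistent when $\beta$ is itself an integer: then the ``top-order seminorm'' in the equivalent norm of $C^\beta_b(X)$ is $\|D^\beta f\|_{L^\infty}$ rather than a Hölder seminorm, and one should check that \eqref{Jintero} is being applied with $k=\beta$ so that the exponent $h/k=h/\beta$ matches. Likewise, for $h<\beta$ noninteger with $h$ below the integer part, \eqref{Jnonintero} is the relevant estimate, whereas if $h$ equals that integer part one still uses \eqref{Jnonintero} with the stated range $h=1,\dots,k$. Once the cases are sorted, everything reduces to the inequality $a^{1-\theta}b^\theta\le a+b$ and the constants from Lemma \ref{C^k}, so no further real work is needed.
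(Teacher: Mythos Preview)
Your proposal is correct and is precisely the natural derivation the paper has in mind: the Corollary is stated in the paper without proof, as an immediate consequence of Lemma~\ref{C^k}, and your argument (bounding each intermediate derivative via \eqref{Jintero} or \eqref{Jnonintero} and converting the multiplicative inequality $a^{1-\theta}b^{\theta}\le a+b$ into norm equivalence, then reading off the $J_{h/\beta}$ property directly) is exactly how one fills in the details. There is nothing to add.
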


\begin{Proposition}
\label{Prop:inclusione}
For every $\beta >0$ and $\gamma \in (0, 1)$ we have 
\begin{equation}
\label{embedding}
(C_b(X), C^{\beta}_b(X))_{\gamma, \infty} \subset \left\{ \begin{array}{ll} 
C^{\gamma\beta}_b(X), & \gamma\beta \notin \N, 
\\
\\
Z^{\gamma\beta}_b(X), & \gamma\beta \in \N, 
\end{array}\right. 
\end{equation}
with continuous embeddings. 
\end{Proposition}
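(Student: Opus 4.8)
The plan is to combine the integral (``mean'') description of real interpolation spaces from Proposition~\ref{medie} with the interpolation inequalities for derivatives of Lemma~\ref{C^k} (equivalently, the class-$J$ statements in Corollary~\ref{Cor:J}) and the Reiteration Theorem in the form \eqref{reiterazione}. Fix $\beta>0$ and $\gamma\in(0,1)$, put $\theta:=\gamma\beta$, and let $m$ be the largest integer with $m<\theta$ (so $0\le m<\theta\le m+1$ and, since $\gamma<1$, also $m<\theta<\beta$). By Proposition~\ref{medie}, any $f\in(C_b(X),C^\beta_b(X))_{\gamma,\infty}$ can be written as $f=\int_0^\infty u(t)\,\frac{dt}{t}$ with $u\in C((0,\infty);C^\beta_b(X))$ and
$$M:=\sup_{t>0}\Big(t^{-\gamma}\|u(t)\|_\infty+t^{1-\gamma}\|u(t)\|_{C^\beta_b(X)}\Big)<+\infty,$$
where $M$ is comparable to the interpolation norm of $f$; so it suffices to bound the relevant norm of $f$ by a constant multiple of $M$.

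First I would reduce to the case $\beta\le m+1$. If $\theta\notin\N$ and $\beta>m+1$, then $\gamma=\theta/\beta<(m+1)/\beta$, and since $C^{m+1}_b(X)$ belongs to the class $J_{(m+1)/\beta}$ between $C_b(X)$ and $C^\beta_b(X)$ (Corollary~\ref{Cor:J}, using $m+1<\beta$), \eqref{reiterazione} gives $(C_b(X),C^\beta_b(X))_{\gamma,\infty}\hookrightarrow(C_b(X),C^{m+1}_b(X))_{\theta/(m+1),\infty}$; here the product of the two parameters is still $\theta$ while the smoothness exponent has dropped to $m+1$, so it is enough to treat this last space. Likewise, if $\theta=n\in\N$ and $\beta>n+1$, the same argument lands us in $(C_b(X),C^{n+1}_b(X))_{n/(n+1),\infty}$ (note $m=n-1$ here). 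The equality case $\gamma=(m+1)/\beta$ forbidden in \eqref{reiterazione} cannot occur: it would force $\theta=m+1\in\N$ in the first instance, contradicting $\theta\notin\N$, and $\theta=n+1\ne n$ in the second. Hence from now on $m<\theta<\beta\le m+1$ (with $m=n-1$ if $\theta=n\in\N$).

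For the core estimate, since $u(t)\in C^\beta_b(X)$, Lemma~\ref{C^k} applied to $u(t)$ together with Corollary~\ref{Cor:J} gives $\|D^hu(t)\|_{L^\infty(X,\mathcal L^h(X))}\le C\,\|u(t)\|_\infty^{1-h/\beta}\|u(t)\|_{C^\beta_b(X)}^{h/\beta}\le CM\,t^{\gamma-h/\beta}$ for $1\le h\le\lfloor\beta\rfloor$, while the top-order seminorm of $u(t)$ (the $C^{\beta-\lfloor\beta\rfloor}$-seminorm of $D^{\lfloor\beta\rfloor}u(t)$, or $\|D^\beta u(t)\|_\infty$ if $\beta\in\N$) is $\le Mt^{\gamma-1}$. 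Since $\gamma-m/\beta=(\theta-m)/\beta>0$ near $t=0$ and $\gamma-1<0$ near $t=+\infty$, one may differentiate $m$ times under the integral sign: $D^jf=\int_0^\infty D^ju(t)\,\frac{dt}{t}$ converges in $C_b(X;\mathcal L^j(X))$ for $j=0,\dots,m$, so $f\in C^m_b(X)$ with $\|f\|_{C^m_b(X)}\le CM$. It remains to estimate $[D^mf]_{C^\delta(X,\mathcal L^m(X))}$ when $\theta=m+\delta$ is not an integer, and the Zygmund seminorm $[D^{n-1}f]_{Z^1(X,\mathcal L^{n-1}(X))}$ when $\theta=n\in\N$. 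For fixed $x,h\in X$, write $\rho:=\|h\|$ and split $D^mf(x+h)-D^mf(x)$ — respectively the second difference $D^{n-1}f(x+2h)-2D^{n-1}f(x+h)+D^{n-1}f(x)$ — as $\int_0^s(\cdots)\frac{dt}{t}+\int_s^\infty(\cdots)\frac{dt}{t}$. The first integral is handled by the crude bound $\le C\|D^mu(t)\|_\infty\le CMt^{\gamma-m/\beta}$ (resp.\ $\le C\|D^{n-1}u(t)\|_\infty\le CMt^{\gamma-(n-1)/\beta}$); for the second I would write the increment of $D^mu(t)$ (resp.\ the second difference of $D^{n-1}u(t)$) as an integral involving $D^{m+1}u(t)$ or the Hölder modulus of $D^{\lfloor\beta\rfloor}u(t)$, obtaining a bound $\le CM\,\rho^{\beta-m}\,t^{\gamma-1}$ (resp.\ $\le CM\,\rho^{\beta-n+1}\,t^{\gamma-1}$). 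Optimising in $s>0$, the two contributions balance at $s\sim\rho^\beta$, and the resulting power of $\rho$ is exactly $\beta\gamma-m=\theta-m=\delta$ (resp.\ $\beta\gamma-(n-1)=1$), since $\gamma\beta=\theta$. Thus $f\in C^{m+\delta}_b(X)=C^\theta_b(X)$ with norm $\le CM$ when $\theta\notin\N$, and $f\in Z^n_b(X)$ with norm $\le CM$ when $\theta=n\in\N$; the uniformity of the constants yields the continuous embeddings.

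I expect the only genuine difficulty to be organisational: picking the intermediate space ($C^{m+1}_b$ or $C^{n+1}_b$) so that reiteration is legitimate and keeps $\theta=\gamma\beta$ unchanged, and then checking that the exponents in the split-integral estimate balance exactly — all of the analytic substance being already contained in Lemma~\ref{C^k} and Proposition~\ref{medie}. The integer case $\theta\in\N$ is marginally more delicate than the non-integer one only because one must estimate a second difference of $D^{n-1}f$ (to land in a Zygmund rather than merely a H\"older space) rather than a first difference of $D^\theta f$.
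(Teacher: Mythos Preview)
Your proof is correct and close to the paper's: both represent $f$ via Proposition~\ref{medie}, extract the intermediate derivative bounds $\|D^ku(t)\|_{L^\infty}\le Ct^{\gamma-k/\beta}M$ from Lemma~\ref{C^k}, and balance the split integral $\int_0^s+\int_s^\infty$ at $s=\|h\|^\beta$. The only real difference is organisational: you first invoke the Reiteration Theorem to cap $\beta$ just above the integer part of $\theta=\gamma\beta$, whereas the paper treats arbitrary $\beta$ directly, at the cost of additional case distinctions (e.g.\ $\beta\le1$ vs.\ $\beta>1$, or $\beta-n<1$ vs.\ $\beta-n\ge1$) inside the split-integral estimate itself. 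Your reduction buys a cleaner core computation for the price of one extra appeal to reiteration; the paper's version avoids reiteration here but repeats the split argument across several sub-cases.

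One slip to fix: the summary ``from now on $m<\theta<\beta\le m+1$'' is impossible in the integer case $\theta=n$, since then $m=n-1$ and $m+1=n=\theta<\beta$. What your reduction actually yields there is $n<\beta\le n+1$ (i.e.\ $\beta\le m+2$), and the bound you write next --- second difference of $D^{n-1}u(t)$ controlled by $CM\,\rho^{\beta-n+1}t^{\gamma-1}$, via either $\|D^{n+1}u(t)\|_\infty$ when $\beta=n+1$ or the H\"older modulus of $D^nu(t)$ when $n<\beta<n+1$ --- is correct precisely for that range. The argument goes through once the summary line is amended.
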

\begin{proof}
By Proposition \ref{medie}, each  $f\in (C_b(X), C^{\beta}_b(X))_{\gamma, \infty}$ may be represented as $ f= \int_0^{\infty} \frac{u(t)}{t}\,dt$, where $u\in C((0, +\infty);C^{\beta}_b(X))$,  $t\mapsto t^{1-\gamma}u(t)\in L^{\infty}((0, +\infty); C^{\beta}_b(X))$,  and $t\mapsto t^{-\gamma}u(t)\in L^{\infty}((0, +\infty);$ $C_b(X))$. 
By the norm equivalence of Proposition  \ref{medie} there is $C= C(\gamma) >0$ such that 
\begin{equation}
\label{stima0,beta}
(i)\;\|u(t)\|_{C_b (X)} \leq Ct^{\gamma}\|f\|_{(C_b(X), C^{\beta}_b(X))_{\gamma, \infty}}, \;\;
(ii)\; \|u(t)\|_{C_b^\beta(X)} \leq Ct^{\gamma -1}\|f\|_{(C_b(X), C^{\beta}_b(X))_{\gamma, \infty}},
\quad
t>0, 
\end{equation}
and  by \eqref{stima0,beta} and Lemma \ref{C^k} there is $C = C(\beta, \gamma)$ such that for every $k\in \N$, $k<\beta $ we have
\begin{equation}
\label{stimak}
\sup_{x\in X}\|D^ku(t)(x)\|_{\mathcal L^k(X)} \leq Ct^{\gamma -k/\beta}\|f\|_{(C_b(X), C^{\beta}_b(X))_{\gamma, \infty}}, \quad t>0. 
\end{equation}
Having such estimates at our disposal, the proof is now similar to the ones of Theorems 3.8(i) and 3.9(i) of \cite{LR}. 
 
First we consider the case that $\gamma\beta$ is not integer. Let $n =[\gamma\beta]$ 
be the integral part of $\gamma\beta$.

If  $n=0$, namely $\gamma\beta <1$, for every $x$, $y\in X$ and $\lambda >0$ we have
$$|f(x) - f(y)|   \leq   \int_0^\lambda  \frac{|u(t)(x) - u(t)(y)|}{t}\,dt + \int_\lambda^\infty  \frac{|u(t)(x) - u(t)(y)|}{t}\,dt $$
where by   \eqref{stima0,beta}(i)  the first integral is bounded by 
$$2\int_0^\lambda \frac{\|u(t)\|_\infty}{t} \, dt \leq 
2\int_0^\lambda t^{\gamma -1}dt \,C\|f\|_{(C_b(X), C^{\beta}_b(X))_{\gamma, \infty}} = \frac{2\lambda ^\gamma C}{\gamma}\|f\|_{(C_b(X), C^{\beta}_b(X))_{\gamma, \infty}} . $$
If $\beta \leq 1$,  by   \eqref{stima0,beta}(ii) the second integral is bounded by 
$$\begin{array}{l} 
\ds \int_\lambda^\infty \frac{\|u(t)\|_{C^\beta_b(X)}}{t}\,dt \;\|x-y\|^\beta \leq 
\int_\lambda^\infty  t^{\gamma -2}\,dt \, C\| f\|_{(C_b(X), C^{\beta}_b(X))_{\gamma, \infty}}  \|x-y\|^\beta
\\
\\
\ds = \frac{\lambda^{\gamma -1}C}{1-\gamma} \|f\|_{(C_b(X), C^{\beta}_b(X))_{\gamma, \infty}} \|x-y\|^\beta; \end{array}$$
while  if $\beta >1$, by \eqref{stimak} with $k=1$ the second integral is bounded by 
 $$\begin{array}{l} 
\ds   \int_\lambda^\infty   \frac{\|u(t)\|_{C^1_b(X)}}{t}\,dt \;\|x-y\|  \leq 
\int_\lambda^\infty  t^{\gamma -1/\beta -1}\,dt \, C\| f\|_{(C_b(X), C^{\beta}_b(X))_{\gamma, \infty}} \|x-y\|
\\
\\
\ds = \frac{\lambda^{\gamma -1/\beta}C}{1/\beta -\gamma} \|f\|_{(C_b(X), C^{\beta}_b(X))_{\gamma, \infty}} \|x-y\|. \end{array}$$
In both cases, taking $\lambda = \|x-y\|^\beta$ we get 
$$|f(x) - f(y)|   \leq  K \|x-y\|^{\gamma \beta}, $$
with $K>0$ independent of $f$, $x$, $y$. Therefore $f\in C^{\gamma\beta}_b(X)$ and  $[f]_{C^{\gamma\beta}(X)} \leq K\|f\|_{(C_b(X), C^{\beta}_b(X))_{\gamma, \infty}}$. 

Let now $n\in \N$. In this case, first we show that $f\in C^n_b(X)$, splitting $f = \int_0^\lambda  \frac{u(t)}{t}\,dt + \int_\lambda^{\infty} \frac{u(t)}{t}\,dt$ for every $\lambda >0$. The first integral has values in $C^n_b(X)$ since   $\gamma >n\beta$, so that 
$t\mapsto u(t)/t \in L^1((0, \lambda);C^n_b(X))$ by \eqref{stimak}. By \eqref{stima0,beta}(ii), the second integral has values in $ C^{\beta}_b(X)$  and therefore in $ C^{n}_b(X)$.  In both cases, $D^n$ commutes with the integral. 

Now we prove that $D^nf \in C^{\gamma\beta - n}_b(X, \mathcal L^n(X))$. As in the case $n=0$, for every $x$, $y\in X$ and $\lambda >0$ we split and estimate as follows, 
$$\begin{array}{lll}
\|(D^nf(x) - D^nf(y))\|_{\mathcal L^n(X)}  &  \leq  & \ds  \int_0^\lambda  \frac{\|D^nu(t)(x) - D^nu(t)(y)\|_{\mathcal L^n(X)}}{t}\,dt 
\\
\\
&&  \ds + \int_\lambda^\infty  \frac{\|D^nu(t)(x) - D^nu(t)(y)\|_{\mathcal L^n(X)}}{t}\,dt . \end{array}$$
In the first integral  we estimate $\|D^nu(t)(x) - D^nu(t)(y)\|_{\mathcal L^n(X)}     \leq 2 \|D^nu(t)\|_{L^\infty(X, \mathcal L^n(X))}$ and we use \eqref{stimak} with $k=n$; in the second integral we estimate $\|D^nu(t)(x) - D^nu(t)(y)\|_{\mathcal L^n(X)}\leq [D^nu(t)]_{C^{\beta -n}(X, \mathcal L^n(X))}\|x-y\|^{\beta -n}$ and we use \eqref{stima0,beta}(ii) if $\beta -n <1$, we estimate $\|D^nu(t)(x) - D^nu(t)(y)\|_{\mathcal L^n(X)}\leq 2 \|D^{n+1}u(t)]_{L^\infty (X, \mathcal L^{n+1}(X))}\|x-y\| $ and we use  \eqref{stimak} with $k=n+1$ if $\beta -n \geq 1$, getting respectively
$$\begin{array}{l}\|(D^nf(x) - D^nf(y))\|_{\mathcal L^n(X)}  
\\
\\
 \leq   \ds \int_0^\lambda 2   t^{\gamma -n/\beta -1}dt\, C\|f\|_{(C_b(X), C^{\beta}_b(X))_{\gamma, \infty}}
+  \int_\lambda^\infty   \|x-y\|^{\beta -n}t^{\gamma -2}dt\, C\|f\|_{(C_b(X), C^{\beta}_b(X))_{\gamma, \infty}}
\\
\\
\ds = C\left( \frac{2\lambda^{\gamma -n/\beta}}{\gamma - n/\beta} + \frac{\|x-y\|^{\beta -n}}{(1-\gamma)\lambda^{1-\gamma}}\right)   \|f\|_{(C_b(X), C^{\beta}_b(X))_{\gamma, \infty}}  \end{array} $$
if $\beta -n <1$, and
$$\begin{array}{l}\|(D^nf(x) - D^nf(y))\|_{\mathcal L^n(X)}  
\\
\\
 \leq   \ds \int_0^\lambda 2  Ct^{\gamma -n/\beta -1}dt\, \|f\|_{(C_b(X), C^{\beta}_b(X))_{\gamma, \infty}}
+  \int_\lambda^\infty  C \|x-y\| t^{\gamma -(n+1)/\beta -1}dt\, \|f\|_{(C_b(X), C^{\beta}_b(X))_{\gamma, \infty}} 
\\
\\
\ds = C\left( \frac{2\lambda^{\gamma -n/\beta}}{\gamma - n/\beta} + \frac{\|x-y\|^{\beta -n}}{((n+1)/\beta -\gamma)\lambda^{(n+1)/\beta-\gamma}}\right)   \|f\|_{(C_b(X), C^{\beta}_b(X))_{\gamma, \infty}}  \end{array} $$
if $\beta -n \geq 1$ (recall that $n= [\gamma \beta]$, so that $(n+1)/\beta -\gamma >0$). In both cases, 
taking again   $\lambda = \|x-y\|^\beta$, we get $\|(D^nf(x) - D^nf(y))\|_{\mathcal L^n(X)}   \leq K \|x-y\|^{\beta\gamma  -n} \|f\|_{(C_b(X), C^{\beta}_b(X))_{\gamma, \infty}} $ for some $K>0$ independent of $f$, $x$, $y$. So, $f\in C^{\beta\gamma}_b(X)$, and  $(C_b(X), C^{\beta}_b(X))_{\gamma, \infty}$ is continuously embedded in $C^{\beta\gamma}_b(X)$ by such estimate and Corollary \ref{Cor:J}.

If $\gamma\beta =n \in \N$ the proof is similar. For $n=1$, for every $x$, $y\in X$ and $\lambda >0$,  we estimate
$$\begin{array}{l}
|f(x) + f(y) -2f((x+y)/2)|   \leq 
\\
\\
\leq  \ds \int_0^\lambda  \frac{|u(t)(x) +u(t)(y) -2u((x+y)/2) |}{t}\,dt + \int_\lambda^\infty  \frac{|u(t)(x) +u(t)(y) -2u((x+y)/2) |}{t}\,dt .\end{array}$$
In the first integral  we use  estimate \eqref{stima0,beta}(i); in the second integral we estimate 
$$|u(t)(x) +u(t)(y) -2u((x+y)/2) | \leq  [Du(t)]_{C^{\beta -1 }(X, X^*)}\|x-y\|^{\beta} \leq C t^{\gamma -1}\|f\|_{(C_b(X), C^{\beta}_b(X))_{\gamma, \infty}}\|x-y\|^{\beta}$$ by \eqref{stima0,beta}(ii) if $\beta \in (1, 2)$;     and 
$$|u(t)(x) +u(t)(y) -2u((x+y)/2) | \leq   \|D^2u\|_{L^\infty (X, \mathcal L^2(X))}\|x-y\|^2 \leq  C t^{\gamma -  2/\beta -1}\|f\|_{(C_b(X), C^{\beta}_b(X))_{\gamma, \infty}}\|x-y\|^{2}$$  if $\beta \geq 2$, by \eqref{stimak} with $k=2$. In the first case we get 
$$\begin{array}{l}
|f(x) + f(y) -2f((x+y)/2)|   \leq \ds  \left(4\int_0^\lambda t^{\gamma -1}dt +  \|x-y\|^\beta \int_\lambda^\infty  t^{\gamma -2}dt\right) C\|f\|_{(C_b(X), C^{\beta}_b(X))_{\gamma, \infty}} 
\\
\\
\ds = \left( \frac{4\lambda^\gamma}{\gamma} + \frac{  \|x-y\|^\beta}{( 1-\gamma)\lambda ^{ 1-\gamma }}\right) C\|f\|_{(C_b(X), C^{\beta}_b(X))_{\gamma, \infty}}, \end{array}$$
while in the second case we get 
$$\begin{array}{l} |f(x) + f(y) -2f((x+y)/2)|   \leq \ds  \left(4\int_0^\lambda t^{\gamma -1}dt +  \|x-y\|^2 \int_\lambda^\infty  t^{\gamma - 2/\beta -1}dt\right) C\|f\|_{(C_b(X), C^{\beta}_b(X))_{\gamma, \infty}}
\\
\\
\ds = \left( \frac{4\lambda^\gamma}{\gamma} + \frac{ \|x-y\|^2}{(2/\beta -\gamma )\lambda ^{2/\beta -\gamma }}\right) C\|f\|_{(C_b(X), C^{\beta}_b(X))_{\gamma, \infty}}. \end{array}. $$
In both cases choosing $\lambda =  \|x-y\|^\beta$ and recalling that $\gamma \beta =1$ we get $|f(x) + f(y) -2f((x+y)/2)|   \leq  K \|f\|_{(C_b(X), C^{\beta}_b(X))_{\gamma, \infty}}\|x-y\|$ for some $K$ independent of $f$, $x$, $y$, so that $f\in Z^1_b(X)$  and  $(C_b(X), C^{\beta}_b(X))_{1/\beta, \infty}$ is continuously embedded in $Z^{1}_b(X)$. 

If $n>1$ the procedure is similar, replacing $f$ by $D^{n-1}f$ and distinguishing the cases $\beta \in (n, n+1)$ and $\beta \geq n+1$. The details are left to the reader. 
\end{proof}

It would be desirable to have equivalences  instead of embeddings in \eqref{embedding}, which  is true in  the case $X= \R^N$. In the  infinite dimensional case the only characterization of this kind that we are aware of is $(BUC(X), BUC^1(X))_{\gamma, \infty}$ $ = C^\gamma_b(X)$ for every $\gamma \in (0, 1)$, proved in \cite{DPC} when $X$ is a separable Hilbert space. 

\section{Good semigroups in $C_b(X)$}
\label{Sect:good}

Although the aim of this paper is the study of generalized Mehler semigroups, we start with a few properties satisfied by a larger class of semigroups. Indeed in this section we consider semigroups of bounded operators $P_t$ in $C_b(X)$, such that 
$$\|P_t\|_{\mathcal L (C_b(X))} \leq M e^{\beta t}, \quad t>0, $$
for some $M>0$, $\beta \in \R$, and such that the function $(t,x)\mapsto P_tf(x)$ is continuous in $[0, +\infty)\times X$. 

Then there exists a closed operator $L :D(L)\subset C_b(X)\mapsto C_b(X)$, such that the resolvent set of $L$ contains $(\beta, +\infty)$, and  
 \begin{equation}
 \label{L1}
 R(\lambda, L)f (x) = \int_0^{\infty} e^{-\lambda t}P_tf(x)\,dt, \quad \lambda >\beta, \;f\in C_b(X), \; x\in X. 
  \end{equation}
The (easy) proof of this statement may be found e.g. in \cite[\S 2(c)]{LP}. From \eqref{L1} we get immediately
$$\| R(\lambda, L)\|_{\mathcal L(C_b(X))} \leq \frac{M}{\lambda - \beta}, \quad \lambda >\beta. $$

Some properties of strongly continuous semigroups are shared by such semigroups. The first one is in the next lemma. 

\begin{Lemma} 
\label{Le:midnight}
For every $s>0$ and $f\in D(L)$, 
$P_sf\in  D(L)$,  we have $P_s Lf = LP_sf$, and 
\begin{equation}
\label{midnight}
P_tf(x) -f (x) = \int_0^t LP_sf(x)\,ds = \int_0^t P_sLf(x)\,ds, \quad t>0, \; x\in X. 
\end{equation}
Consequently, for every   $x\in X$ the function $t\mapsto P_tf(x)$ is  differentiable at any $t\geq 0$ with derivative $P_tLf(x)$, and moreover
\begin{equation}
\label{Lip}
  \|P_tf -f\|_{\infty} \leq t \,M\max\{ e^{\beta}, 1\}  \|Lf\|_{\infty}  , \quad 0<t\leq 1. 
 \end{equation}
\end{Lemma}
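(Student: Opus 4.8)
The plan is to mimic the classical argument for strongly continuous semigroups, the only subtlety being that $P_t$ need not be strongly continuous, so all manipulations will be carried out pointwise in $x\in X$ and justified via the continuity of $(t,x)\mapsto P_tf(x)$ and the resolvent formula \eqref{L1}. First I would fix $f\in D(L)$, set $g=Lf$, and write $f=R(\lambda,L)h$ with $h=\lambda f-g\in C_b(X)$; applying $P_s$ under the integral \eqref{L1} (which is legitimate since $P_s$ is bounded and the integrand is dominated in sup norm) gives $P_sf=R(\lambda,L)P_sh$, so $P_sf\in D(L)$ and $LP_sf=\lambda P_sf-P_sh=P_s(\lambda f-h)=P_sLf$. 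This gives the commutation identity.

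Next I would establish the integral identity \eqref{midnight}. Using $LP_sf=P_sg$ and the continuity of $s\mapsto P_sg(x)$, the function $s\mapsto\int_0^t P_sg(x)\,ds$ is well defined; I would show it equals $P_tf(x)-f(x)$ by taking Laplace transforms in $t$: for $\lambda>\beta$, $\int_0^\infty e^{-\lambda t}(P_tf(x)-f(x))\,dt=R(\lambda,L)f(x)-\lambda^{-1}f(x)$, while by Fubini $\int_0^\infty e^{-\lambda t}\big(\int_0^t P_sg(x)\,ds\big)dt=\lambda^{-1}\int_0^\infty e^{-\lambda s}P_sg(x)\,ds=\lambda^{-1}R(\lambda,L)g(x)$, and the resolvent identity $R(\lambda,L)g=\lambda R(\lambda,L)f-f$ (which holds because $g=Lf$) shows the two Laplace transforms agree for all $\lambda>\beta$. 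Since both $t\mapsto P_tf(x)-f(x)$ and $t\mapsto\int_0^t P_sg(x)\,ds$ are continuous, injectivity of the Laplace transform forces \eqref{midnight}. Differentiability of $t\mapsto P_tf(x)$ with derivative $P_tLf(x)$ is then immediate from the fundamental theorem of calculus applied to the continuous integrand $s\mapsto P_sLf(x)=P_sg(x)$; at $t=0$ this reads as the right derivative.

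Finally, for \eqref{Lip} I would estimate directly from \eqref{midnight}: for $0<t\le1$ and every $x\in X$,
$$|P_tf(x)-f(x)|\le\int_0^t|P_sLf(x)|\,ds\le\int_0^t\|P_s\|_{\mathcal L(C_b(X))}\|Lf\|_\infty\,ds\le\int_0^t M e^{\beta s}\|Lf\|_\infty\,ds\le t\,M\max\{e^\beta,1\}\|Lf\|_\infty,$$
using $\sup_{0\le s\le1}e^{\beta s}=\max\{e^\beta,1\}$; taking the sup over $x$ gives \eqref{Lip}.

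\textbf{Main obstacle.} There is no hard analytic step here; the only point requiring care is the justification that one may pull $P_s$ inside the resolvent integral and apply Fubini to the double Laplace integral. Both are routine once one observes that all integrands are bounded (in sup norm) by an integrable exponential weight, but they must be stated cleanly since $P_t$ is merely bounded, not strongly continuous, so dominated convergence is used pointwise in $x$ rather than in the norm of $C_b(X)$.
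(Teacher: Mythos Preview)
Your proposal is correct. The commutation argument and the derivation of \eqref{Lip} are the same as in the paper, but for the key integral identity \eqref{midnight} you take a genuinely different route: you compute the Laplace transforms (in $t$) of $t\mapsto P_tf(x)-f(x)$ and of $t\mapsto\int_0^t P_sLf(x)\,ds$, show they both equal $\lambda^{-1}R(\lambda,L)Lf(x)$ for every $\lambda>\beta$, and conclude by injectivity of the Laplace transform on continuous, exponentially bounded functions. The paper instead writes $f=R(\lambda,L)(\lambda f-Lf)$ and manipulates $P_tf(x)-f(x)$ directly into the form $(e^{\lambda t}-1)f(x)-\int_0^t e^{-\lambda\sigma}P_\sigma(\lambda f-Lf)(x)\,d\sigma$, observes that the right-hand side extends holomorphically to all $\lambda\in\C$, and then lets $\lambda\to0$ to obtain \eqref{midnight}. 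Your Laplace-transform argument is slightly more streamlined, avoiding the analytic continuation step; the paper's computation has the minor advantage of producing an explicit identity valid for each fixed $\lambda$ along the way. Both proofs rest on the same preliminary fact that $P_s$ commutes with $R(\lambda,L)$, and both flag the passage of $P_s$ through the pointwise-defined resolvent integral as the one place requiring care (the paper handles it via Riemann sums, which amounts to your dominated-convergence remark).
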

\begin{proof}
By formula  \eqref{L1}, $P_s$ commutes with $R(\lambda, L)$ for every $\lambda >\beta$, because it commutes with the Riemann sums 
$\frac{1}{nT}\sum_{k=1}^n e^{-kT/n}P_{kT/n}f$, for every $T>0$ and $n\in \N$. 
Therefore it commutes with $L$ on $D(L)$. 
Fixed any  $f\in D(L)$ and $\lambda >\beta $ set $g= \lambda f - Lf$ so that $f= R(\lambda, L)g$. For every $t>0$ and $x\in X$ we have 
$$\begin{array}{lll}
P_tf(x) -f (x)  &= & \ds  \int_0^{\infty} e^{-\lambda s}P_{t+s}g(x)\,ds - \int_0^{\infty} e^{-\lambda s}P_{s}g(x)\,ds 
\\
\\
& =  & 
\ds \int_t^{\infty} e^{-\lambda (\sigma -t)}P_\sigma g(x)\,d\sigma - \int_0^{\infty} e^{-\lambda s}P_{s}g(x)\,ds 
\\
\\
& =  & 
\ds (e^{\lambda t}-1)
\int_0^{\infty} e^{-\lambda \sigma}P_\sigma g(x)\,d\sigma - \int_0^t e^{-\lambda \sigma}P_\sigma g(x)\,d\sigma
\\
\\
& =  & \ds  (e^{\lambda t}-1)f(x) - \int_0^t e^{-\lambda \sigma}P_\sigma (\lambda f - Lf)(x)\,d\sigma . 
\end{array}$$
Although the proof of this formula makes sense only for $\lambda >\beta$, the formula holds for every $\lambda\in \C$ because  the right-hand side is a holomorphic function of $\lambda$.  
Letting $\lambda\to 0$, we get $P_tf(x) -f (x) = \int_0^t  (P_\sigma   Lf)(x)\,d\sigma$ and \eqref{midnight} follows. 

Since $s\mapsto P_sLf(x)$ is continuous in $[0, +\infty)$, by \eqref{midnight} 
 $t\mapsto P_tf(x)$ is differentiable at any $t\geq 0$ with derivative  $ P_t Lf(x)$. Estimate \eqref{Lip} follows immediately from \eqref{midnight}. \end{proof}

\begin{Corollary}
\label{Cor:strong_cont}
For every $f\in C_b(X)$ we have 
$$\lim_{t\to 0} \|P_tf -f\|_{\infty} =0 \Longleftrightarrow \lim_{\lambda \to \infty} \|\lambda R(\lambda, L)f -f\|_{\infty} =0 \Longleftrightarrow f\in \overline{D(L)}. $$
\end{Corollary}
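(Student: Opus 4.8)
The plan is to prove the chain $(1)\Rightarrow(2)\Rightarrow(3)\Rightarrow(1)$, where $(1)$, $(2)$, $(3)$ are the three conditions in the order stated in the corollary. The only nontrivial ingredients are formula \eqref{L1}, estimate \eqref{Lip}, and the uniform bound $\|P_t\|_{\mathcal L(C_b(X))}\leq M e^{\beta t}$.

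For $(1)\Rightarrow(2)$ I would start from \eqref{L1}. Since $\lambda\int_0^\infty e^{-\lambda t}\,dt=1$ for $\lambda>0$, this gives
$$\lambda R(\lambda,L)f(x)-f(x)=\lambda\int_0^\infty e^{-\lambda t}\big(P_tf(x)-f(x)\big)\,dt,\quad \lambda>\beta,$$
hence $\|\lambda R(\lambda,L)f-f\|_\infty\leq\lambda\int_0^\infty e^{-\lambda t}\|P_tf-f\|_\infty\,dt$. Given $\eps>0$, pick $\delta>0$ with $\|P_tf-f\|_\infty\leq\eps$ for $0<t\leq\delta$: the part of the integral over $(0,\delta)$ is then $\leq\eps$, while over $(\delta,+\infty)$ one uses $\|P_tf-f\|_\infty\leq(Me^{\beta t}+1)\|f\|_\infty$, so that the contribution is bounded by $\lambda\|f\|_\infty\big(Me^{-(\lambda-\beta)\delta}/(\lambda-\beta)+e^{-\lambda\delta}/\lambda\big)$, which tends to $0$ as $\lambda\to+\infty$. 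Thus $\limsup_{\lambda\to\infty}\|\lambda R(\lambda,L)f-f\|_\infty\leq\eps$ for every $\eps>0$, i.e. $\lambda R(\lambda,L)f\to f$.

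For $(2)\Rightarrow(3)$: since the range of $R(\lambda,L)$ is $D(L)$, each element $\lambda R(\lambda,L)f$ lies in $D(L)$; if these elements converge to $f$ in $C_b(X)$, then $f\in\overline{D(L)}$. For $(3)\Rightarrow(1)$: if $f\in D(L)$, estimate \eqref{Lip} gives $\|P_tf-f\|_\infty\leq t\,M\max\{e^\beta,1\}\|Lf\|_\infty\to 0$ as $t\to 0$. For a general $f\in\overline{D(L)}$, fix $\eps>0$ and $g\in D(L)$ with $\|f-g\|_\infty\leq\eps$; using the uniform bound $\|P_t\|_{\mathcal L(C_b(X))}\leq M':=M\max\{e^\beta,1\}$ for $0<t\leq 1$, write
$$\|P_tf-f\|_\infty\leq\|P_t(f-g)\|_\infty+\|P_tg-g\|_\infty+\|g-f\|_\infty\leq(M'+1)\eps+\|P_tg-g\|_\infty,$$
and let $t\to 0$ to obtain $\limsup_{t\to 0}\|P_tf-f\|_\infty\leq(M'+1)\eps$; since $\eps$ is arbitrary, $(1)$ holds.

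The argument is essentially routine. The one point that needs a little care is the tail estimate in $(1)\Rightarrow(2)$: there one must exploit that $\lambda\,e^{-(\lambda-\beta)\delta}\to 0$ as $\lambda\to+\infty$, i.e. the exponential growth bound on $\|P_t\|$ is exactly what makes the Laplace transform \eqref{L1} converge and the remainder over $(\delta,+\infty)$ vanish. The uniform boundedness of $\{P_t:0<t\leq 1\}$, used in $(3)\Rightarrow(1)$, comes from the same bound.
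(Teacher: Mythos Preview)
Your proof is correct and follows essentially the same route as the paper: the implication $(1)\Rightarrow(2)$ via the Laplace representation and a split of the integral at $\delta$, the trivial step $(2)\Rightarrow(3)$, and $(3)\Rightarrow(1)$ by first treating $f\in D(L)$ via \eqref{Lip} and then extending to the closure using the uniform bound on $\|P_t\|$ for $t\in(0,1]$. Your tail estimate in $(1)\Rightarrow(2)$ is in fact slightly more carefully written than the paper's.
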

\begin{proof}
For every $f\in D(L)$ we have $\lim_{t\to 0} \|P_tf -f\|_{\infty} = \lim_{\lambda \to \infty} \|\lambda R(\lambda, L)f -f\|_{\infty} =0$, respectively by \eqref{Lip} and by the equality $ \lambda R(\lambda, L)f = R(\lambda, L)Lf + f$. Since there is $C>0$ such that $\|P_t-I\|_{\mathscr L(C_b(X))} \leq C$ for every $t\in (0, 1)$, $\|\lambda R(\lambda, L)- I\|_{\mathscr L (C_b(X))} \leq C$ for every $\lambda > \beta +1$, one gets as well $\lim_{t\to 0} \|P_tf -f\|_{\infty} = \lim_{\lambda \to \infty} \|\lambda R(\lambda, L)f -f\|_{\infty} =0$ for every $f\in  \overline{D(L)}$. 

Conversely, if $ \lim_{\lambda \to \infty} \|\lambda R(\lambda, L)f -f\|_{\infty} =0$ then $f\in \overline{D(L)}$ since $\lambda R(\lambda, L)f\in D(L)$ for every $\lambda >0$. If $\lim_{t\to 0} \|P_tf -f\|_{\infty} =0$, for every $\eps >0$ let $\delta >0$ be such that $\|P_tf -f\|_{\infty} \leq \eps$ for every $t\in [0, \delta]$. For each $\lambda >\beta $ and $x\in X$ we have
$$\lambda R(\lambda, L)f(x)  -f(x) = \left( \int_0^\delta + \int_\delta^{\infty}\right) \lambda e^{-\lambda t}(P_tf (x) - f(x))\,dt, $$
so that 
 $\|\lambda R(\lambda, L)f -f\|_{\infty}  \leq \delta \eps +   \sup_{t\geq \delta} \lambda e^{-\lambda t}(M e^{\beta t } +1)\|f\|_{\infty}$, 
and therefore $\lim_{\lambda\to \infty}\|\lambda R(\lambda, L)f -f\|_{\infty} =0$, which implies    $f\in \overline{D(L)}$. 
\end{proof}

Now we consider   H\"older continuity. 
It is well known that for every strongly continuous  semigroup $P_t$ in a Banach space $E$ and for every $f\in E$, $\alpha \in (0, 1)$,  the function $t\mapsto P_tf$ belongs to $C^{\alpha}([0, T]; E)$ for some/all $T \in (0, +\infty)$ if and only if $f\in (E, D(L))_{\alpha, \infty}$, where $L$ is the infinitesimal generator of $P_t$. 
This characterization goes back to the pioneering paper by Lions and Peetre \cite{LP} and may be found in (almost) any book about interpolation theory. See e.g. \cite[Sect. 1.13]{T}.  
It still holds for analytic semigroups not necessarily strongly continuous at $0$ (\cite[Sect. 2.2]{L}) and for some non analytic semigroups in spaces of continuous and bounded functions (\cite[Ch. 5]{L1}). In fact in the proof of the next Proposition \ref{Pr:LionsPeeetre} we use a result  from \cite{L1}. 
\begin{Proposition}
\label{Pr:LionsPeeetre}
If  $\beta \leq 0$, 
for every $f\in C_b(X)$ and $\alpha\in (0, 1)$ the following conditions are equivalent:
\begin{itemize}
\item[(i)] $[f]^{(1)}_{\alpha}:=\ds \sup_{ t\in (0, 1)} t^{-\alpha} \|P_t f-f\|_{\infty}  < +\infty$,
\\
\item[(ii)] $[f]^{(2)}_{\alpha}:= \ds \sup_{ t>0}  t^{-\alpha} \|P_t f-f\|_{\infty} <+\infty$,
\\
\item[(iii)] $[f]^{(3)}_{\alpha}:=\sup_{\lambda >0} \| \lambda ^{\alpha} LR(\lambda, L)f\|_{\infty} <+\infty$,
\\
\item[(iv)] $f\in (C_b(X), D(L))_{\alpha, \infty}$,
\end{itemize}
and  the norms $f\mapsto \|f\|_{\infty} + [f]^{(1)}_{\alpha} $, $f\mapsto \|f\|_{\infty} + [f]^{(2)}_{\alpha} $, $f\mapsto \|f\|_{\infty} + [f]^{(3)}_{\alpha} $ are equivalent to the norm of $(C_b(X), D(L))_{\alpha, \infty}$. 

If $\beta >0$, for every $f\in C_b(X)$ and $\alpha\in (0, 1)$ condition  (i) is equivalent to (iv), and the norm $f\mapsto \|f\|_{\infty} + [f]^{(1)}_{\alpha} $  is equivalent to the norm of $(C_b(X), D(L))_{\alpha, \infty}$. 
\end{Proposition}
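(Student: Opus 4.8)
The plan is to establish, in the case $\beta\le 0$, the full cycle of equivalences (i)$\Leftrightarrow$(ii)$\Leftrightarrow$(iii)$\Leftrightarrow$(iv) with comparable (semi)norms, and then to reduce the case $\beta>0$ to the case $\beta\le 0$ by replacing $P_t$ with a suitable exponentially rescaled semigroup. The only inputs will be the resolvent representation \eqref{L1}, the commutation/integral formulas of Lemma \ref{Le:midnight} (in particular \eqref{midnight} and \eqref{Lip}), and elementary integral estimates; no analyticity is used, and \eqref{Lip} will play the role that the analytic bound $\|LP_t\|\le C/t$ plays for analytic semigroups. A remark used repeatedly is that $t\le t^{\alpha}$ for $t\in(0,1)$, $\alpha\in(0,1)$, which is what makes the ``slow'' resolvent and semigroup bounds harmless.

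Assume first $\beta\le 0$, so that $\|P_t\|_{\mathcal L(C_b(X))}\le M$ for all $t\ge 0$ and $(0,+\infty)$ lies in the resolvent set of $L$. Then (ii)$\Rightarrow$(i) is trivial and (i)$\Rightarrow$(ii) is immediate, since for $t\ge 1$ one has $t^{-\alpha}\|P_tf-f\|_\infty\le\|P_tf-f\|_\infty\le(M+1)\|f\|_\infty$. For (ii)$\Rightarrow$(iii) I will use the identity $\lambda^{\alpha}LR(\lambda,L)f=\lambda^{\alpha}\big(\lambda R(\lambda,L)f-f\big)=\lambda^{\alpha}\int_0^\infty\lambda e^{-\lambda t}(P_tf-f)\,dt$, valid by \eqref{L1}, together with $\int_0^\infty\lambda e^{-\lambda t}t^{\alpha}\,dt=\Gamma(\alpha+1)\lambda^{-\alpha}$, which yields $[f]^{(3)}_{\alpha}\le\Gamma(\alpha+1)[f]^{(2)}_{\alpha}$. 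For (iii)$\Rightarrow$(iv) I will estimate the $K$-functional directly: for $t\in(0,1)$ set $\lambda=1/t$ and split $f=a+b$ with $b:=\lambda R(\lambda,L)f\in D(L)$ and $a:=f-b=-LR(\lambda,L)f$; condition (iii) controls $\|a\|_\infty$ and $\|Lb\|_\infty=\lambda\|LR(\lambda,L)f\|_\infty$ by $t^{\alpha}[f]^{(3)}_\alpha$ and $t^{\alpha-1}[f]^{(3)}_\alpha$ respectively, while $\|b\|_\infty\le M\|f\|_\infty$, so that (using $t\le t^{\alpha}$) $K(t,f;C_b(X),D(L))\le\|a\|_\infty+t\|b\|_{D(L)}\le t^{\alpha}\big(M\|f\|_\infty+2[f]^{(3)}_{\alpha}\big)$; for $t\ge1$ the trivial bound $K\le\|f\|_\infty\le t^\alpha\|f\|_\infty$ suffices. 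Finally, for (iv)$\Rightarrow$(ii), for each $t>0$ I pick a near-optimal splitting $f=a_t+b_t$ with $b_t\in D(L)$ and $\|a_t\|_\infty+t\|b_t\|_{D(L)}\le 2t^{\alpha}\|f\|_{(C_b(X),D(L))_{\alpha,\infty}}$, and estimate $\|P_tf-f\|_\infty\le(M+1)\|a_t\|_\infty+\|P_tb_t-b_t\|_\infty$, where by \eqref{midnight} and $\|P_s\|\le M$ one has $\|P_tb_t-b_t\|_\infty\le Mt\|Lb_t\|_\infty$; this gives $\|P_tf-f\|_\infty\le(2M+2)t^{\alpha}\|f\|_{(C_b(X),D(L))_{\alpha,\infty}}$. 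Chaining the four implications, and using the standard embedding $(C_b(X),D(L))_{\alpha,\infty}\hookrightarrow C_b(X)$, gives the equivalence of the four norms.

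For $\beta>0$, fix $\omega>\beta$ and set $Q_t:=e^{-\omega t}P_t$. Then $Q_t$ is again a semigroup with $\|Q_t\|_{\mathcal L(C_b(X))}\le Me^{(\beta-\omega)t}$ (growth bound $\beta-\omega<0$) and $(t,x)\mapsto Q_tf(x)$ continuous, so the case already treated applies to $Q_t$. From \eqref{L1} one reads off that the generator $L_Q$ of $Q_t$ satisfies $R(\lambda,L_Q)=R(\lambda+\omega,L)$, hence $L_Q=L-\omega I$, $D(L_Q)=D(L)$ with equivalent graph norms, and consequently $(C_b(X),D(L_Q))_{\alpha,\infty}=(C_b(X),D(L))_{\alpha,\infty}$ with equivalent norms. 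Moreover, writing $Q_tf-f=e^{-\omega t}(P_tf-f)+(e^{-\omega t}-1)f$ and symmetrically $P_tf-f=e^{\omega t}(Q_tf-f)+(e^{\omega t}-1)f$, and using $|e^{\pm\omega t}-1|\le C_\omega t\le C_\omega t^{\alpha}$ for $t\in(0,1)$, one sees that $[f]^{(1)}_{\alpha}$ for $P_t$ and for $Q_t$ are comparable up to additive multiples of $\|f\|_\infty$; combining this with the equivalence (i)$\Leftrightarrow$(iv) for $Q_t$ (from the previous paragraph) yields (i)$\Leftrightarrow$(iv) for $P_t$ and the corresponding norm equivalence.

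The main obstacle is not any individual estimate — each is elementary — but the fact that generalized Mehler semigroups need not be analytic, so the classical interpolation characterizations cannot be quoted verbatim: one must verify directly that \eqref{Lip} is exactly the substitute needed for the missing analytic gradient bound, and one must be careful in the regimes ($t\ge 1$, or $\lambda$ near $0$) where the relevant quantities are merely bounded rather than small, which is where the inequality $t\le t^{\alpha}$ on $(0,1)$ enters. The second delicate point is the $\beta>0$ reduction, in particular the identification $D(L_Q)=D(L)$ with equivalent norms; alternatively, one could invoke \cite[Ch.\ 5]{L1}, where non-analytic semigroups of this type are handled, as the reference in the statement suggests.
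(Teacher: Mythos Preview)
Your proof is correct and follows essentially the same route as the paper's: the same trivial (i)$\Leftrightarrow$(ii), the same Laplace-transform computation for (ii)$\Rightarrow$(iii), the same splitting $f=a+b$ with \eqref{midnight}/\eqref{Lip} for (iv)$\Rightarrow$(ii), and the same exponential rescaling $P_t\mapsto e^{-\omega t}P_t$ for $\beta>0$.

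The one genuine difference is the step (iii)$\Rightarrow$(iv). The paper does not argue this directly: it invokes \cite[Prop.~3.1]{L1}, which characterizes $(C_b(X),D(L))_{\alpha,\infty}$ via the resolvent quantity $\sup_{\lambda>0}\|\lambda^{\alpha}LR(\lambda,L)f\|_\infty$ for any sectorial-type operator without assuming $D(L)$ dense. You instead give a direct $K$-functional estimate, choosing $\lambda=1/t$ and decomposing $f=(-LR(\lambda,L)f)+\lambda R(\lambda,L)f$; this is exactly the standard proof behind that cited proposition, so you have unpacked the black box rather than taken a different path. The payoff is that your argument is fully self-contained, at the modest cost of a few more lines; the paper's version is shorter but relies on an external reference whose proof is, in substance, the computation you wrote out.
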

\begin{proof}
Let $\beta \leq 0$. Since $\|P_t\|_{\mathcal L (C_b(X))} \leq M$ for every $t$, the equivalence between (i) and (ii) is obvious, as well as the inequalities $[f]^{(1)}_{\alpha} \leq [f]^{(2)}_{\alpha} \leq \max \{ [f]^{(1)}_{\alpha}, \,(M+1)\|f\|_{\infty} \}$ for every $f\in C_b(X)$ and $\alpha\in (0, 1)$. 

Let us prove that (ii) implies (iii). For every $\lambda >0$ we have $ LR(\lambda, L)f = \lambda R(\lambda, L)f -f$, so that for every $x\in X$ 
$$| \lambda ^{\alpha} LR(\lambda, L)f(x) | = \left| \int_0^{+\infty} \lambda ^{\alpha +1} t^{\alpha} e^{-\lambda t}\frac{P_tf(x) - f(x)}{t^{\alpha}}\,dt . \right|$$
Therefore, 
$$\|\lambda ^{\alpha} LR(\lambda, L)f\|_{\infty} \leq \Gamma(\alpha +1)[f]^{(1)}_{\alpha}$$
so that (iii) holds, and  $[f]^{(3)}_{\alpha}\leq \Gamma(\alpha +1)[f]^{(1)}_{\alpha}$. 

The equivalence between (iii) and (iv), as well as the relevant norms,  follows from \cite[Prop. 3.1]{L1}, where no assumption on the density of $D(L)$ was made. 

If (iv) holds, for every $t>0$ and for every decomposition $f= a+b$, with $a\in C_b(X)$, $b\in D(L)$, we have $ \|P_tb - b\|_{\infty}\leq t\|Lb\|_{\infty}$ by 
\eqref{Lip}, and therefore
$$\frac{\|P_tf - f\|_{\infty}}{t^\alpha} \leq \frac{\|P_ta - a\|_{\infty}}{t^\alpha} + \frac{\|P_tb - b\|_{\infty}}{t^\alpha} \leq  \frac{2\|  a\|_{\infty}}{t^\alpha} +
 \frac{t\|Lb\|_{\infty}}{t^\alpha} . $$
Taking the infimum over all the decompositions we get  
$$\frac{\|P_tf - f\|_{\infty}}{t^\alpha} \leq 2 t^{-\alpha} K(t,f,C_b(X), D(L)), \quad t>0, $$
so that (ii) holds, and $[f]^{(2)}_{\alpha}\leq 2 \|f\|_{(C_b(X), D(L))_{\alpha, \infty}}$.

If $\beta >0$   we consider  the semigroup $S_t: e^{-\beta t}P_t$, whose generator is $L_\beta:= L-\beta I :D(L)\mapsto C_b(X)$, 
and $R(\lambda, L_\beta) = R(\lambda + \beta, L)$ for $\lambda >0$. So, conditions (i) to (iv) are equivalent for $S_t$. 
Since the graph norm of $L_\beta$ is equivalent to the graph norm of $L$, 
we still obtain   (i) $\Longleftrightarrow$ (iv), with equivalence of the norms $f\mapsto \|f\|_{\infty} + [f]^{(1)}_{\alpha} $,   $f\mapsto \|f\|_{(C_b(X), D(L))_{\alpha, \infty}}$. 
\end{proof}

Using the semigroup law for $P_t$ yields that for every $T>0$ the function $t\mapsto  P_tf =:u(t)$ belongs to $C^\alpha([0,T]; C_b(X))$ if and only if $f\in (C_b(X), D(L))_{\alpha, \infty}$. If   $\beta \leq 0$, in such a case $u\in C^\alpha ([0, +\infty); C_b(X))$. We may go on, characterizing the initial functions $f$ such that $t\mapsto P_tf$ is more regular. 
  
\begin{Corollary}
For every $f\in C_b(X)$ and $\alpha\in (0, 1)$, the function $t\mapsto P_tf $ belongs to $C^1([0, +\infty);$
$C_b(X))$ if and only if $f\in D(L)$ and $Lf\in \overline{D(L)}$; it belongs to $C^{1+\alpha }([0, T];C_b(X))$ for some/every $T>0$ if and only if $f\in D(L)$ and $Lf\in (C_b(X), D(L))_{\alpha, \infty}$. 
\end{Corollary}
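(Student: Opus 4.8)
The plan is to reduce the $C^1$ and $C^{1+\alpha}$ statements for $t \mapsto P_t f$ to the already-established results of Lemma \ref{Le:midnight} and Proposition \ref{Pr:LionsPeeetre} applied not to $f$ itself but to $Lf$. The starting point is the fundamental identity \eqref{midnight}: for $f \in D(L)$, we have $P_t f - f = \int_0^t P_s L f\, ds$ pointwise, and since $s \mapsto P_s L f(x)$ is continuous (indeed $(s,x) \mapsto P_s g(x)$ is continuous for every $g \in C_b(X)$, here $g = Lf$), the map $t \mapsto P_t f(x)$ is differentiable with derivative $P_t L f(x)$. So as soon as $f \in D(L)$, the candidate for the derivative of the $C_b(X)$-valued curve $t \mapsto P_t f$ is $t \mapsto P_t L f$.

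For the $C^1$ part, I would argue as follows. If $f \in D(L)$ and $g := Lf \in \overline{D(L)}$, then by Corollary \ref{Cor:strong_cont} the curve $t \mapsto P_t g$ is continuous at $0$ in the sup norm, hence (using the semigroup law and $\|P_t\|_{\mathcal L(C_b(X))} \le M e^{\beta t}$, or just the analogous argument with time shifts) continuous on all of $[0,+\infty)$ with values in $C_b(X)$. Combined with the pointwise differentiability and pointwise derivative formula above, the standard fact that a curve with continuous (Banach-space-valued) derivative is $C^1$ gives that $t \mapsto P_t f$ is $C^1([0,+\infty); C_b(X))$ with derivative $t \mapsto P_t L f$. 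Conversely, if $t \mapsto P_t f \in C^1([0,+\infty); C_b(X))$, then in particular the difference quotients $(P_t f - f)/t$ converge in $C_b(X)$ as $t \to 0$; by the characterization \eqref{L1} of $R(\lambda, L)$ via the Laplace transform (or directly, since $L$ is closed and $\frac{1}{t}\int_0^t P_s(\cdot)\,ds \to \mathrm{id}$ on the range of $R(\lambda,L)$), the limit forces $f \in D(L)$ with $Lf$ equal to the derivative at $0$; and the derivative $t \mapsto P_t L f$ being continuous at $0$ in sup norm gives $Lf \in \overline{D(L)}$ again by Corollary \ref{Cor:strong_cont}.

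For the $C^{1+\alpha}$ part, the key observation is that once $f \in D(L)$, the curve $v(t) := P_t f$ has derivative $v'(t) = P_t L f$, so $v \in C^{1+\alpha}([0,T]; C_b(X))$ if and only if the curve $t \mapsto P_t L f$ belongs to $C^{\alpha}([0,T]; C_b(X))$. Then I would invoke the equivalence (i)$\Leftrightarrow$(iv) of Proposition \ref{Pr:LionsPeeetre} — valid whether or not $\beta \le 0$ — applied to the function $g = Lf \in C_b(X)$: the curve $t \mapsto P_t g$ lies in $C^{\alpha}$ on a (equivalently, every) bounded interval iff $g \in (C_b(X), D(L))_{\alpha,\infty}$. (The reduction of $C^{\alpha}$-regularity of $t \mapsto P_t g$ on $[0,T]$ to condition (i) uses the semigroup law exactly as in the remark preceding this corollary: $\|P_{t+s} g - P_s g\|_\infty = \|P_s(P_t g - g)\|_\infty \le M e^{\beta T}\|P_t g - g\|_\infty$.) Combining, $v \in C^{1+\alpha}([0,T];C_b(X))$ iff $f \in D(L)$ and $Lf \in (C_b(X), D(L))_{\alpha,\infty}$. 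For the "only if" direction one again first extracts $f \in D(L)$ from the fact that $C^{1+\alpha}$ implies $C^1$, which was just handled.

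The main obstacle, such as it is, is the careful bookkeeping in the converse directions: from $C^1$ (resp. $C^{1+\alpha}$) regularity of the curve one must rigorously deduce $f \in D(L)$, which means identifying the $C_b(X)$-limit of $(P_t f - f)/t$ with an element of $D(L)$ in the resolvent sense of \eqref{L1} rather than the infinitesimal-generator sense — this is where one uses that $L$ is closed and commutes with $P_s$ and with $R(\lambda,L)$ (Lemma \ref{Le:midnight}). Everything else is a direct transcription of Corollary \ref{Cor:strong_cont} and Proposition \ref{Pr:LionsPeeetre} with $f$ replaced by $Lf$, together with the elementary calculus fact relating $C^{1(+\alpha)}$-regularity of a curve to $C^{0(+\alpha)}$-regularity of its derivative.
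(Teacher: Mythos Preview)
Your proposal is correct and follows essentially the same route as the paper: the forward directions use \eqref{midnight} together with Corollary~\ref{Cor:strong_cont} and Proposition~\ref{Pr:LionsPeeetre} applied to $Lf$, and for the converse the paper carries out precisely the Laplace-transform argument you mention first (after noting $u'(t)=P_t u'(0)$ from the semigroup law, it integrates by parts in \eqref{L1} to get $\lambda R(\lambda,L)f = f + R(\lambda,L)u'(0)$, hence $f\in D(L)$ with $Lf=u'(0)$). The only difference is that the paper makes that converse step explicit rather than leaving it as ``closedness plus commutativity'', but the underlying identity is the same.
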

\begin{proof}
Set $u(t):= P_tf$. If $f\in D(L)$ and $Lf\in  \overline{D(L)} $ then $s\mapsto P_sLf$ belongs to $C([0, +\infty); C_b(X))$, so that $u$  belongs to $C^1([0, +\infty);C_b(X))$ by \eqref{midnight} and $u'(t)= P_tLf =Lu(t)$. If in addition $Lf\in (C_b(X), D(L))_{\alpha, \infty}$, then $u'\in C^{\alpha }([0, T];C_b(X))$ for every $T>0$ and therefore $u\in C^{1+\alpha }([0, T];C_b(X))$ for every $T>0$. 

Conversely, assume that $u$ is continuously differentiable. Then $u'(t) = \lim_{h\to 0^+}P_t(( P_hf - f)/h) = P_t u'(0)$. Since $u'$ is continuous, $u'(0)\in \overline{D(L)}$ by Corollary \ref{Cor:strong_cont}, and moreover integrating by parts in \eqref{L} we get, for every $\lambda >\beta$, 
$$\lambda R(\lambda, L)f = \int_0^{\infty} e^{-\lambda t} u'(t) dt + f = R(\lambda, L)u'(0) + f$$
which yields $f = R(\lambda, L)( \lambda f -  u'(0)) \in D(L)$ and $R(\lambda, L)Lf = \lambda R(\lambda, L)f  -f = R(\lambda, L)u'(0) $ so that $Lf=u'(0) \in 
 \overline{D(L)}$.  
 If in addition $u'(t) = P_tLf$ belongs to $C^\alpha ([0,T]; C_b(X))$ for some $T>0$, then $Lf\in (C_b(X), D(L))_{\alpha, \infty}$ by Proposition  \ref{Pr:LionsPeeetre} and therefore $u\in C^{1+\alpha }([0, T];C_b(X))$ for  every $T>0$. 
  \end{proof}

\section{Generalized Mehler semigroups}

Throughout this section  $P_t$ is the generalized Mehler semigroup defined by  \eqref{P_t},
where $T_t$ is any strongly continuous semigroup of bounded operators  on  $X$, and $\{\mu_t:\; t\geq 0\}$ is a family of Borel probability measures in $X$ such that $\mu_0= \delta_0$  , $t\mapsto \mu_t$ is weakly continuous in $[0, +\infty)$ and   \eqref{semigruppo} holds. 

Notice that 
\begin{equation}
\label{contraz}
\|P_t\|_{ \mathscr L(C_b(X))} = 1, \;\;t>0, \quad \|R(\lambda, L)\|_{ \mathscr L(C_b(X))} = \frac{1}{\lambda}, \;\;\lambda >0. 
\end{equation}
The inequalities $\leq$ are an immediate consequence of \eqref{P_t}, the equalities follow taking $f\equiv 1$. Another immediate consequence of the representation formula \eqref{P_t} is that $P_t$ preserves $C^k_b(X)$ for every $k\in \N$. In particular, 
$$DP_tf(x) = T_t^* \int_X Df(T_tx+y)\mu_t(dy), \quad f\in C^1_b(X), \;x\in X, $$
and therefore there are $M>0$, $\omega\in \R$ such that 
\begin{equation}
\label{C^1}
\|DP_tf \|_{L^{\infty} (X, X^*)} \leq Me^{\omega t}  \|D f \|_{ L^{\infty} (X, X^*)}, \quad t>0,\; f\in C^1_b(X). 
\end{equation}
Similarly, for every  $k\in \N$, 
\begin{equation}
\label{C^k}
\|D^kP_tf \|_{L^{\infty} (X, \mathscr L^k(X))} \leq Me^{k\omega t}  \|D^k f \|_{ L^{\infty} (X,  \mathscr L^k(X))}, \quad t>0,\; f\in C^k_b(X). 
\end{equation}

It is not hard to check that for every $f\in C_b(X)$ the function $(t,x)\mapsto P_tf(x)$ is continuous in $[0, +\infty)\times X$. See e.g. \cite[Lemma 2.1]{BRS}. Therefore the results of Section \ref{Sect:good} hold for $P_t$.

Notice that the semigroup $S_t$ defined by $S_tf(x) := f(T_tx)$ is itself a generalized Mehler semigroup, with $\mu_t = \delta_0$ for every $t$. 
We denote by $L_0$ its generator, and by $Y_0$ the subspace of strong continuity of $S_t$, namely
$$Y_0 := \{ f\in C_b(X):\; \lim_{t\to 0} \|f(T_t\cdot) - f\|_{\infty} =0\}. $$
For $\alpha\in (0, 1)$ we denote by $Y_\alpha$ the subspace of $\alpha$-H\"older continuity of $S_t$, namely
$$Y_\alpha  := \left\{ f\in C_b(X):\; [f]_{Y_\alpha} := \sup_{t>0} \frac{\|f(T_t\cdot ) - f\|_{\infty}}{ t^{\alpha}} <+\infty \right\}, $$
endowed with the norm
$$\|f\|_{Y_\alpha}:= \|f\|_{\infty} +  [f]_{Y_\alpha}. $$
By Proposition  \ref{Pr:LionsPeeetre} it coincides with the 
interpolation space $(C_b(X), D(L_0))_{\alpha, \infty} $, and the respective norms are equivalent. 

Notice that if $g\in R(\lambda, L)(C^1_b(X))$ for some $\lambda >0$ and $x\in D(A)$, integrating by parts in \eqref{L} we get $L_0g(x) = g'(x)(Ax)$, so that  $L_0$ is a realization of a drift operator. 

Going back to the general case, 
Corollary \ref{Cor:strong_cont} gives an abstract characterization of the subspace of strong continuity of $P_t$. Under some additional assumptions we prove an explicit characterization of it. 

\begin{Proposition}
\label{Pr:strong_cont}
Assume that $P_t$ maps $C_b(X)$ into $BUC(X)$, for every $t>0$. Then 
$$ \overline{D(L)} = BUC(X) \cap Y_0. $$
\end{Proposition}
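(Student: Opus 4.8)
The plan is to prove the two inclusions separately. For the inclusion $\overline{D(L)} \subset BUC(X)\cap Y_0$, I would first observe that $D(L) = R(\lambda,L)(C_b(X))$ for any fixed $\lambda > 0$, and that under the hypothesis $P_t(C_b(X))\subset BUC(X)$, the resolvent $R(\lambda,L)f = \int_0^\infty e^{-\lambda t}P_tf\,dt$ is an integral (in the sup norm, as a Bochner-type average of uniformly continuous functions, after checking the integrand is suitably measurable) of functions in $BUC(X)$, hence lands in $BUC(X)$ since $BUC(X)$ is a closed subspace of $C_b(X)$. So $D(L)\subset BUC(X)$, and since $BUC(X)$ is closed, $\overline{D(L)}\subset BUC(X)$. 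For the $Y_0$ part, note $Y_0 = \overline{D(L_0)}$ by Corollary \ref{Cor:strong_cont} applied to $S_t$, so it suffices to show $D(L)\subset \overline{D(L_0)}$; in fact I would show $D(L)\subset D(L_0)$ is false in general but that $R(\lambda,L)f \in Y_0$ directly, by estimating $\|S_\tau R(\lambda,L)f - R(\lambda,L)f\|_\infty$. Using the semigroup relation \eqref{semigruppo} one has, for $f\in C_b(X)$, an identity relating $S_\tau P_t f$ and $P_t f$ up to a term controlled by the continuity of $t\mapsto \mu_t$; the cleanest route is probably to write $P_tf(T_\tau x) = \int_X f(T_{t+\tau}x + y)\mu_t(dy)$ and compare with $P_{t+\tau}f(x) = \int_X f(T_{t+\tau}x + y)\mu_{t+\tau}(dy)$, so that $S_\tau P_t f - P_{t+\tau}f \to 0$ in sup norm as $\tau\to 0$ (uniformly in $t$ on compacts) by weak continuity of $\mu$ together with the $BUC$ property making this uniform in $x$; combined with $\|P_{t+\tau}f - P_t f\|_\infty\to 0$ when $f\in\overline{D(L)}$ (Corollary \ref{Cor:strong_cont}), this gives $\overline{D(L)}\subset Y_0$.

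For the reverse inclusion $BUC(X)\cap Y_0 \subset \overline{D(L)}$, by Corollary \ref{Cor:strong_cont} it is enough to show $\lim_{t\to 0}\|P_tf - f\|_\infty = 0$ for every $f\in BUC(X)\cap Y_0$. I would split
$$
P_tf(x) - f(x) = \big(P_tf(x) - f(T_tx)\big) + \big(f(T_tx) - f(x)\big).
$$
The second term tends to $0$ uniformly in $x$ precisely because $f\in Y_0$. For the first term, $P_tf(x) - f(T_tx) = \int_X \big(f(T_tx+y) - f(T_tx)\big)\mu_t(dy)$, which is bounded in absolute value by $\int_X \omega_f(\|y\|)\,\mu_t(dy)$ where $\omega_f$ is the modulus of uniform continuity of $f$ (here is where $f\in BUC(X)$ enters). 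Since $\mu_t \to \delta_0$ weakly as $t\to 0$ and $\omega_f$ is bounded and continuous with $\omega_f(0)=0$, this integral tends to $0$; one makes this rigorous by the standard truncation $\int_X \omega_f(\|y\|)\mu_t(dy) \le \omega_f(\delta) + \|f\|_\infty\,\mu_t(\{\|y\|>\delta\})$ — wait, $\omega_f$ need not be monotone a priori, so I would instead use $\omega_f(r)\le 2\|f\|_\infty$ always and fix $\delta$ so that $\omega_f(r)\le\varepsilon$ for $r\le\delta$, giving $\int \le \varepsilon + 2\|f\|_\infty\,\mu_t(\{\|y\|\ge\delta\})$, and then invoke weak convergence $\mu_t\to\delta_0$ to send $\mu_t(\{\|y\|\ge\delta\})\to 0$ (using that $\{\|y\|\ge\delta\}$ is closed, or passing to a slightly larger open set, via the portmanteau theorem).

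The main obstacle I anticipate is the uniformity in $x$ needed in the first inclusion: weak convergence $\mu_t\to\delta_0$ gives pointwise-in-$x$ control of $\int_X(f(T_tx+y)-f(T_tx))\mu_t(dy)$ for free only through boundedness, and one genuinely needs the $BUC$ hypothesis — more precisely the uniform continuity of $f$ together with the fact that $P_tf\in BUC(X)$ — to upgrade this to sup-norm control. A secondary technical point is justifying that $R(\lambda,L)f$, defined by the scalar formula \eqref{L} pointwise in $x$, actually lies in $BUC(X)$; I would handle this by a Riemann-sum approximation argument (as in the proof of Lemma \ref{Le:midnight}), noting finite Riemann sums of $e^{-\lambda t}P_tf$ are in $BUC(X)$ and converge uniformly on $X$ to $R(\lambda,L)f$ because $t\mapsto P_tf(x)$ is continuous and bounded uniformly in $x$. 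Neither difficulty is deep, but the bookkeeping to get everything uniform in $x$ is the part that requires care.
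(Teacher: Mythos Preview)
Your argument for the inclusion $BUC(X)\cap Y_0 \subset \overline{D(L)}$ is exactly the paper's: split $P_tf - f = (P_tf - S_tf) + (S_tf - f)$, control the first piece via uniform continuity of $f$ plus $\mu_t\to\delta_0$, and the second via $f\in Y_0$.

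For the forward inclusion $\overline{D(L)}\subset BUC(X)\cap Y_0$, however, you take a longer road than necessary. The paper does not touch the resolvent or Riemann sums at all: since $f\in\overline{D(L)}$ gives $\|P_tf-f\|_\infty\to 0$ (Corollary~\ref{Cor:strong_cont}) and each $P_tf\in BUC(X)$, one gets $f\in BUC(X)$ immediately as a uniform limit. Then the \emph{same} intermediate estimate you already proved for the reverse inclusion---namely $\|P_tf - S_tf\|_\infty\to 0$ for $f\in BUC(X)$---applies to $f$ itself, and the triangle inequality $\|S_tf-f\|_\infty\le \|S_tf-P_tf\|_\infty + \|P_tf-f\|_\infty$ gives $f\in Y_0$ in one line. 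Your detour through $P_{t+\tau}f$ works too, but notice that $S_\tau P_tf - P_{t+\tau}f = S_\tau g - P_\tau g$ with $g=P_tf\in BUC(X)$, so it is literally the intermediate estimate again; the phrasing ``by weak continuity of $\mu$'' is misleading, since weak convergence $\mu_{t+\tau}\to\mu_t$ alone does not give uniformity over the translates $\{f(z+\cdot):z\in X\}$.

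There is also a genuine soft spot in your $BUC$ argument: continuity of $(t,x)\mapsto P_tf(x)$ and uniform-in-$x$ boundedness do \emph{not} imply that Riemann sums of $\int_0^\infty e^{-\lambda t}P_tf\,dt$ converge uniformly in $x$, so your justification that $R(\lambda,L)f\in BUC(X)$ is incomplete as stated. It can be repaired (e.g.\ by showing $\{P_tf:t\in[a,b]\}$ is equi-uniformly-continuous for $0<a<b$ via $P_tf=P_{t-a}(P_af)$), but the paper's one-line argument $f=\lim_{t\to 0}P_tf$ avoids the issue entirely.
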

\begin{proof}
Let $f\in  BUC(X)$. Fixed any $\eps >0$ let $r$ be such that $|f(z+y)-f(y)|\leq \eps$ for every $z\in X$ and $y\in B(0, r)$. 
Then for every $t>0$ and $x\in X$ we have
$$\begin{array}{lll}
|P_tf(x) - f(T_tx)| & \leq & \ds \int_X |f(T_tx+y) - f(T_tx)| \,\mu_t(dy) = 
\\
\\
& \leq & \ds \left( \int_{B(0, r)} +  \int_{X\setminus B(0, r)}\right) |f(T_tx+y) - f(T_tx)| \,\mu_t(dy)
\\
\\
&\leq & \eps + 2\|f\|_{\infty} \mu_t(X\setminus B(0, r)). 
\end{array}$$
Since $\mu_t$ weakly converges to $\delta_0$, $\lim_{t\to 0} \mu_t(X\setminus B(0, r))=0$. Therefore, for $t$
 small enough we have $|P_tf(x) - f(T_tx)|\leq 2\eps$, so that 
 \begin{equation}
 \label{intermediate}
 \lim_{t\to 0} \|P_tf - f(T_t\cdot)\|_{\infty} =0, \quad f\in BUC(X). 
 \end{equation}

Now, let $f\in  \overline{D(L)} $. Then $\|P_tf -f\|_{\infty} \to 0$ as $t\to 0$ by Corollary \ref{Cor:strong_cont} , and since $P_tf\in BUC(X)$ for $t>0$, also $f\in BUC(X)$. 
Moreover, $\|f(T_t\; \cdot )- f  \|_{\infty} \leq  \| f(T_t\;\cdot) -P_tf \|_{\infty} + \|P_tf -f\|_{\infty}$ vanishes as $t\to 0$ by \eqref{intermediate} and Corollary \ref{Cor:strong_cont}. 

Conversely, if $f\in BUC(X) \cap Y_0$ then $\lim_{t\to 0}\|P_tf -f\|_{\infty} \to 0$ by \eqref{intermediate}. 
\end{proof}

As for strong continuity, also for H\"older continuity we  give an explicit characterization under further assumptions. 
Preliminarly we  recall some definitions and properties of Borel measures in Banach spaces, taken from \cite{BogaDiff}. 

A Borel probability measure $\mu$  in $X$ is called {\em Fomin differentiable} along $v\in X$ if for every Borel set $B$ the incremental ratio 
$ (\mu (B+tv) - \mu(B))/t  $ has finite limit  as $t\to 0$. Such a limit is a signed measure, called  $d_v\mu(B)$; setting $\mu_v(B):= \mu(B+v)$ for every Borel set $B$  we have  $\lim_{t\to 0}\left\| \frac{\mu_{tv}- \mu}{t} - d_v\mu\right\| =0$,
where $\|\cdot\|$ denotes the total variation norm. 

Moreover, $d_v\mu$ is absolutely continuous with respect to $\mu$. The density $\beta^{\mu}_v\in L^1(X, \mu)$ is called {\em Fomin derivative}   of $\mu$ along $v$, and it satisfies
\begin{equation}
\label{Fomin}
\int_X \frac{\partial f}{\partial v}\,\mu(dx) = - \int_X \beta^{\mu}_vf\,\mu(dx), \quad f\in C^1_b(X). 
\end{equation}
Conversely, if \eqref{Fomin} holds for some $\beta^{\mu}_v\in L^1 (X, \mu)$ and for every $f\in C^1_b(X)$, then $\mu$  is Fomin differentiable along $v$ (\cite[Thm. 3.6.8]{BogaDiff}).

Under Hypothesis \ref{Hyp:LR} it was proved in \cite{LR} that $P_tf$ has Gateaux derivatives of any order, for every $t>0$ and for every $f\in C_b(X)$. 
To be more precise,  in \cite{LR} it was assumed that $X$ is separable; however this condition was used only to guarantee that every Borel measure in $X$ is Radon, in order to apply the results on differentiable measures contained in \cite[\S 3.3]{BogaDiff}.   
Here we prove better smoothing properties, showing that $P_t$ is strong-Feller (namely, that it maps $B_b(X)$ into $C_b(X)$), and that all the Gateaux derivatives of $P_tf$ are in fact Fr\'echet derivatives.

\begin{Lemma}
\label{strong-Feller}
Let  Hypothesis \ref{Hyp:LR} hold. For each $f\in B_b(X)$, $P_tf\in C^k_b(X)$ for every $t>0$ and $k\in \N$. Moreover for every $k\in \N$, the function $(t,x)\mapsto P_tf(x)$ belongs to $C((0, +\infty)\times X)$ if $f\in B_b(X)$, and to  $C([0, +\infty)\times X)$ if $f\in C_b(X)$. 
\end{Lemma}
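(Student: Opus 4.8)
The plan is to establish the $C^k_b$-regularity of $P_tf$ for bounded Borel $f$ by deriving, from Hypothesis \ref{Hyp:LR}, integration-by-parts formulas that express the Fr\'echet derivatives of $P_tf$ as integrals against the Fomin derivatives $\beta_{t,h}$, and then to handle joint continuity in $(t,x)$ by combining the semigroup law \eqref{semigruppo} with the smoothing just obtained.

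\emph{Step 1: the first-order derivative and the strong-Feller property.} Fix $t>0$ and $f\in C^1_b(X)$. Since $\mu_t$ is Fomin differentiable along $T_th$ with derivative $\beta_{t,h}$, the identity \eqref{Fomin} applied to the function $x\mapsto f(T_tx+\cdot)$ (after a change of variables $z=T_tx+y$) gives a formula of the shape
$$
\frac{\partial}{\partial h}\,P_tf(x) = \int_X f(T_tx+y)\,\beta_{t,h}(y)\,\mu_t(dy),
$$
so that, using \eqref{ipotesi_intro},
$$
\Bigl| \frac{\partial}{\partial h}\,P_tf(x) \Bigr| \le \|f\|_\infty\,\|\beta_{t,h}\|_{L^1(X,\mu_t)} \le \frac{C e^{\omega t}}{t^{\theta}}\,\|f\|_\infty\,\|h\|.
$$
The right-hand side is linear and bounded in $h$ and does not involve any derivative of $f$, so by density of $C^1_b(X)$ in $C_b(X)$ (in the mixed topology / bounded pointwise sense used in \cite{LR}) the formula extends first to $f\in C_b(X)$ and then, by another approximation argument on $f$, to bounded Borel $f$; the resulting Gateaux derivative is linear and continuous in $h$, uniformly in $x$, which upgrades it to a Fr\'echet derivative and shows $P_tf\in C^1_b(X)$ with $\|DP_tf\|_\infty \le C e^{\omega t}t^{-\theta}\|f\|_\infty$. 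In particular $P_tf$ is continuous, i.e. $P_t$ is strong-Feller.

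\emph{Step 2: higher-order derivatives by iteration.} For $k\ge 2$ I would use the semigroup property $P_t = P_{t/k}\circ\cdots\circ P_{t/k}$ ($k$ factors) together with \eqref{semigruppo}: writing $\mu_t = (\mu_{t/k}\circ T_{(k-1)t/k}^{-1}) \ast \cdots \ast \mu_{t/k}$, each of the $k$ constituent measures is again Fomin differentiable along the range of the relevant $T_s$, with the bound \eqref{ipotesi_intro} (with $s = t/k$), because Fomin differentiability and the $L^1$-bound are stable under convolution and under the push-forward/composition appearing there. Differentiating the composition $P_{t/k}\cdots P_{t/k}f$ and moving one derivative onto each factor via the first-order formula of Step 1 yields
$$
\|D^kP_tf\|_{L^\infty(X,\mathscr L^k(X))} \le \Bigl(\frac{C e^{\omega t/k}}{(t/k)^{\theta}}\Bigr)^{k}\|f\|_\infty \le \frac{C_k e^{\omega t}}{t^{k\theta}}\,\|f\|_\infty,
$$
and the same approximation argument as before shows these are genuine Fr\'echet derivatives, so $P_tf\in C^k_b(X)$ for every $t>0$ and $k\in\N$. (This is essentially the route of \cite{LR}, reorganized so as to produce Fr\'echet rather than merely Gateaux derivatives and to accommodate Borel $f$.)

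\emph{Step 3: joint continuity.} For $f\in B_b(X)$, fix $t_0>0$ and $x_0\in X$. Write $P_tf(x) = P_{t-s}(P_s f)(x)$ with $0<s<t_0$ small and fixed; by Step 2, $g:=P_sf\in C^1_b(X)\subset BUC$-on-balls, and $(t,x)\mapsto P_{t-s}g(x)$ is continuous on $(s,+\infty)\times X$ by \cite[Lemma 2.1]{BRS} (continuity of $(\tau,x)\mapsto P_\tau g(x)$ for continuous bounded $g$). Hence $(t,x)\mapsto P_tf(x)\in C((0,+\infty)\times X)$. If moreover $f\in C_b(X)$, then continuity up to $t=0$ is exactly \cite[Lemma 2.1]{BRS}, giving membership in $C([0,+\infty)\times X)$.

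The main obstacle I expect is Step 2 — more precisely, rigorously justifying that one may differentiate $k$ times under all the integral/convolution signs and then \emph{transfer} each differentiation onto a separate factor while keeping the constants under control. One must check that the constituent measures in the convolution decomposition of $\mu_t$ inherit Hypothesis \ref{Hyp:LR} along the appropriate directions, that convolution of Fomin-differentiable measures behaves as expected with $\beta_{(\mu\ast\nu)} = \beta_\mu \ast \nu$ in the relevant $L^1$ sense (this is where one invokes \cite[\S 3.3]{BogaDiff} and where the Radon hypothesis on $\mu_t$ is used), and that the passage from Gateaux to Fr\'echet derivatives is legitimate. The continuity of $x\mapsto D^kP_tf(x)$ for Borel $f$ — needed to conclude $P_tf\in C^k_b$ rather than merely that the derivatives exist and are bounded — follows from the integral representation once one observes the integrand depends continuously on $x$ via $T_t$ and dominated convergence applies thanks to the uniform $L^1$-bound on $\beta_{t,h}$.
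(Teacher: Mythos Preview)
Your Step 3 is essentially the paper's argument, and the iteration idea in Step 2 is the right one. But Step 1 has two genuine gaps. First, the upgrade from Gateaux to Fr\'echet is not valid as stated: a Gateaux derivative that is linear and bounded in $h$, even with a bound uniform in $x$, need not be a Fr\'echet derivative; what is required is continuity of $x\mapsto D_GP_tf(x)$ with values in $X^*$. You acknowledge this in your final paragraph, but the justification you give there---dominated convergence applied to $f(T_tx+y)\beta_{t,h}(y)$---fails for Borel $f$, since the integrand is not continuous in $x$. Second, the approximation route from $C^1_b$ to $C_b$ to $B_b$ is shaky in infinite dimension: $C^1_b(X)$ is not sup-norm dense in $C_b(X)$, and passing the derivative formula through a bounded-pointwise limit is exactly the kind of step that needs the representation you are trying to establish.

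The paper avoids both problems by reversing the order of the argument. It first proves, with no differentiation and no approximation, that $P_tf$ is Lipschitz for \emph{every} $f\in B_b(X)$: writing $P_tf(x+h)-P_tf(x)=\int f(T_tx+z)\,((\mu_t)_{T_th}-\mu_t)(dz)$ and invoking the total-variation bound $\|(\mu_t)_{T_th}-\mu_t\|\le\|\beta_{t,h}\|_{L^1(X,\mu_t)}$ from \cite[Thm.~3.3.7(i)]{BogaDiff} gives $|P_tf(x+h)-P_tf(x)|\le Ce^{\omega t}t^{-\theta}\|f\|_\infty\|h\|$. So $P_tf\in BUC(X)$. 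Then, for $g\in BUC(X)$, the paper uses the explicit iterated-integral formula for $D^n_GP_tg$ from \cite[Prop.~3.3(ii)]{LR} and shows that $x\mapsto D^n_GP_tg(x)$ is continuous in $\mathcal L^n(X)$ \emph{because $g$ is uniformly continuous}; this continuity is what upgrades Gateaux to Fr\'echet. Finally, for Borel $f$ one writes $P_tf=P_{t/2}(P_{t/2}f)$ with $P_{t/2}f\in BUC(X)$ by the Lipschitz step, and the $BUC$ result applies. The missing idea in your proposal is precisely this bootstrap: get $BUC$ first by a measure-theoretic estimate that needs no regularity of $f$, and only then prove continuity of the derivative in $x$.
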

\begin{proof} 
Fix $f\in B_b(X)$. For every  $x$, $h\in X$ we have 
$$P_tf(x+h)  = \int_X f(T_tx + T_th +y)\mu_t(dy) =  \int_X f(T_tx + z )(\mu_t)_{T_th}(dz), $$
so that 
$$|P_tf(x+h) - P_tf(x)| = \left| \int_X f(T_tx + z )((\mu_t)_{T_th}(dz)-\mu_t(dz)\right| \leq \|f\|_{\infty} \| (\mu_t)_{T_th} -\mu_t\|. $$
By \cite[Thm. 3.3.7(i)]{BogaDiff} we have $ \| (\mu_t)_{T_th} -\mu_t\| \leq  \| \beta_{t,h}\|_{L^1(X, \mu_t)} $, so that 
$$|P_tf(x+h) - P_tf(x)| \leq \frac{C e^{\omega t}}{t^\theta }\|h\|, \quad t>0, \; h\in X. $$
Therefore, $P_tf$ is Lipschitz continuous for $t>0$. Let us prove that in fact it belongs to $C^k_b(X)$ for every $k\in \N$. 

In \cite{LR} we remarked that if $g\in BUC(X)$, then $P_tg$ is Fr\'echet differentiable at every $x\in X$, since the Gateaux derivative is continuous with values in $X^*$. Now we extend this remark to derivatives of arbitrary order $n$, through the representation formula for the $n$-th order Gateaux derivative $D^n_GP_tg$ of \cite[Prop. 3.3(ii)]{LR}, 
$$\begin{array}{lll}
 D^n_GP_tg(x)(h_1,\ldots,h_n)  & = & \ds (-1)^n\int_X\cdot\cdot\cdot\int_X g(T_tx+T_{\frac{n-1}{n}t}y_1+\cdot\cdot\cdot+T_{\frac t n}y_{n-1}+y_n)\cdot 
 \\
 \\
& &\quad \quad  \quad\cdot  \beta_{ t/n, h_n}(y_n)\cdot\cdot\cdot\beta_{t/n,h_1} (y_1)\mu_{t/n}(dy_n)\cdot\cdot\cdot\mu_{t/n}(dy_1)  . 
\end{array}$$
So, for $x$, $x_0\in X$  we have 
$$\begin{array}{l}
(D^n_GP_tg(x) - D^n_GP_tg(x_0))(h_1,\ldots,h_n)| \leq 
\\
\\
\ds \int_X\cdot\cdot\cdot\int_X |g(T_tx+T_{\frac{n-1}{n}t}y_1+\cdot\cdot\cdot+T_{\frac t n}y_{n-1}+y_n) - g(T_tx_0+T_{\frac{n-1}{n}t}y_1+\cdot\cdot\cdot+T_{\frac t n}y_{n-1}+y_n)| \cdot
\\
\\
\quad \quad  \quad\cdot | \beta_{ t/n, h_n}(y_n)| \cdot\cdot\cdot |\beta_{t/n,h_1} (y_1)|\mu_{t/n}(dy_n)\cdot\cdot\cdot\mu_{t/n}(dy_1)
\\
\\
\leq (Ce^{\omega t/n} n^\theta t^{-\theta})^n \|h_1\| \cdot\cdot\cdot  \|h_n\|\, \sup_{z\in X} |g(T_tx +z) - g(T_tx_0 +z)|
\end{array}$$
By the uniform continuity of $g$, $D^n_GP_tg$ is continuous at $x_0$ with values in $\mathcal L^n(X)$, and therefore it is a Fr\'echet derivative. 

Now, let $f\in B_b(X)$. For $t>0$, the function $g= P_{t/2}f$ belongs to $BUC(X)$, by the first part of the proof, and therefore $P_tf = P_{t/2}g$ has Fr\'echet derivatives of any order. 

The continuity of the function $[0, +\infty)\times X\mapsto \R$, $(t,x)\mapsto P_tf(x)$  if $f\in C_b(X)$ was proved in \cite[Lemma 2.1]{BRS}. 
If $f\in  B_b(X)$, $P_{t_0}f \in C_b(X)$ for every $t_0>0$, and therefore $(t,x)\mapsto P_tf(x) = (P_{t-t_0}P_{t_0}f )(x)$ is continuous in $[t_0, +\infty)\times X$. 
Since $t_0$ is arbitrary, $(t,x)\mapsto P_tf(x) \in C((0, +\infty)\times X)$. 
\end{proof}

By Lemma \ref{strong-Feller}, the pointwise estimates on the Gateaux derivatives of $P_tf$ in \cite{LR} are in fact estimates on the Fr\'echet derivatives. They amount to 
\begin{equation}
\label{nth-derivatives1}
\|D^n P_tf(x)\|_{\mathcal L^n(X)} \leq K_n \max\{ 1,  t^{-n\theta} \} \|f\|_{\infty}, \quad t>0, \; x\in X, \; f\in B_b(X), 
\end{equation}
for some $K_n>0$ (\cite[Prop.  3.3(ii)]{LR}). By Lemma 2.1(i) of \cite{LR}, with $Y= \mathcal L^n(X)$, we obtain estimates for H\"older seminorms, namely for every $n\in \N$, $\alpha \in (0, 1) $ there exists $K_{n,\alpha}>0$ such that 
\begin{equation}
\label{nth-derivativesHolder}
[D^n P_tf]_{C^\alpha (X,  \mathcal L^n(X))} \leq K_{n,\alpha} \max\{ 1,  t^{-(n+\alpha)\theta} \} \|f\|_{\infty}, \quad t>0, \; x\in X, \; f\in B_b(X). 
\end{equation}

Moreover, Lemma \ref{strong-Feller} and Theorem 3.8(i) of \cite{LR} yield
\begin{equation}
\label{D(L)}
 D(L)  \subset \left\{ \begin{array}{ll}C^{1/\theta}_b(X), & 1/\theta \notin \N, 
 \\
 \\
 Z^{1/\theta}_b(X), & 1/\theta \in \N, 
 \end{array}\right.
 \end{equation}
with continuous embedding.

\begin{Proposition}
\label{pr:basic}
If Hypothesis  \ref{Hyp:LR}  holds, we have
 $$(C_b(X), D(L))_{\alpha, \infty} \subset \left\{ \begin{array}{ll}C^{\alpha/\theta}_b(X), & \alpha/\theta \notin \N, 
 \\
 \\
 Z^{\alpha/\theta}_b(X), & \alpha/\theta \in \N, 
 \end{array}\right. $$
 with continuous embedding.
\end{Proposition}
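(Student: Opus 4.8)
The plan is to combine the inclusion $D(L) \subset C^{1/\theta}_b(X)$ (or $Z^{1/\theta}_b(X)$) from \eqref{D(L)} with the embeddings \eqref{embedding} of Proposition \ref{Prop:inclusione}, using the Reiteration Theorem. First I would set $\beta = 1/\theta$ and observe that \eqref{D(L)} states $D(L) \hookrightarrow C^\beta_b(X)$, so by elementary properties of real interpolation functors (monotonicity in the second space) we have
\begin{equation}
(C_b(X), D(L))_{\alpha, \infty} \subset (C_b(X), C^\beta_b(X))_{\alpha, \infty}.
\end{equation}
Then Proposition \ref{Prop:inclusione} with $\gamma = \alpha$ gives $(C_b(X), C^\beta_b(X))_{\alpha, \infty} \subset C^{\alpha\beta}_b(X) = C^{\alpha/\theta}_b(X)$ when $\alpha/\theta \notin \N$, and $\subset Z^{\alpha/\theta}_b(X)$ when $\alpha/\theta \in \N$, with continuous embeddings in both cases. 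Composing these two embeddings yields exactly the asserted inclusion.

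The one subtlety I need to be careful about is that \eqref{D(L)} gives $D(L) \hookrightarrow C^{1/\theta}_b(X)$ or $Z^{1/\theta}_b(X)$ depending on whether $1/\theta$ is an integer, whereas the target space in the statement depends on whether $\alpha/\theta$ is an integer, and these two conditions need not agree. When $1/\theta \in \N$, I would use that $Z^{1/\theta}_b(X) \hookrightarrow C^\beta_b(X)$ for every $\beta < 1/\theta$ (and in fact $Z^{1/\theta}_b(X)$ sits between $C^{1/\theta - \varepsilon}_b(X)$ and $C^{1/\theta}_b(X)$ in the appropriate sense), or more cleanly, I would note that the proof of Proposition \ref{Prop:inclusione} can be run with $C^\beta_b(X)$ replaced by $Z^\beta_b(X)$ since the only estimates used there, \eqref{stima0,beta} and \eqref{stimak}, follow from the corresponding estimates for $u(t)$ in the Zygmund norm together with Lemma \ref{C^k}-type interpolation inequalities; alternatively, since $Z^{1/\theta}_b(X)$ embeds in $C^\beta_b(X)$ for all $\beta<1/\theta$ and $(C_b(X), D(L))_{\alpha,\infty} = (C_b(X), D(L))_{\alpha',\infty}$ is not literally monotone in $\alpha$, I would instead invoke the Reiteration Theorem in the form \eqref{reiterazione} with the class $J_\theta$ statement. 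The cleanest route, which I would take, is simply to cite that \cite[Thm. 3.8(i), Thm. 3.9(i)]{LR} already prove the analogue of Proposition \ref{Prop:inclusione} for the pair $(C_b(X), D(L))$ directly — the author's own remark after \eqref{stimak} ("the proof is now similar to the ones of Theorems 3.8(i) and 3.9(i) of \cite{LR}") signals that the argument of Proposition \ref{Prop:inclusione} is modeled on exactly that, so it transplants verbatim with $u(t) \in D(L)$ and the derivative estimates \eqref{nth-derivatives1}, \eqref{nth-derivativesHolder} replacing \eqref{stimak}.

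Concretely, the cleanest self-contained argument: by Proposition \ref{medie}, any $f \in (C_b(X), D(L))_{\alpha,\infty}$ is $f = \int_0^\infty u(t)\,t^{-1}\,dt$ with $\|u(t)\|_\infty \leq C t^\alpha \|f\|$ and $\|u(t)\|_{D(L)} \leq C t^{\alpha - 1}\|f\|$. Since $u(t) \in D(L)$, estimates \eqref{nth-derivatives1} and \eqref{nth-derivativesHolder} apply to $u(t) = P_{t}(\cdots)$? — no, $u(t)$ is a general element of $D(L)$, not of the form $P_s g$. So instead I would use that $D(L) \hookrightarrow C^\beta_b(X)$ with $\beta = 1/\theta$ from \eqref{D(L)} to get $\|D^k u(t)\|_{L^\infty} \leq C t^{\alpha - k/\beta}\|f\|$ via Lemma \ref{C^k} and the Zygmund analogue when $\beta \in \N$, which are precisely estimates \eqref{stima0,beta}--\eqref{stimak} with $C^\beta_b(X)$ in place of the interpolation space. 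From here the splitting argument of Proposition \ref{Prop:inclusione} (split $f = \int_0^\lambda + \int_\lambda^\infty$, estimate increments/second differences, optimize $\lambda = \|x-y\|^\beta$) goes through word for word, and Corollary \ref{Cor:J} converts the resulting seminorm bound into a norm bound. The main obstacle — and it is mild — is the bookkeeping for the integer case $\alpha/\theta \in \N$, where one must track second differences of $D^{n-1}f$ rather than first differences and handle the sub-cases $\beta \in (n, n+1)$ versus $\beta \geq n+1$ exactly as in the proof of Proposition \ref{Prop:inclusione}; no new idea is needed.
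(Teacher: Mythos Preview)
Your treatment of the case $1/\theta\notin\N$ is exactly the paper's: embed $D(L)\hookrightarrow C^{1/\theta}_b(X)$ via \eqref{D(L)}, use monotonicity of the interpolation functor, then apply Proposition~\ref{Prop:inclusione}.

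For the case $1/\theta\in\N$ your proposal diverges from the paper, and there is a soft spot. Your ``cleanest self-contained argument'' requires the estimate $\|D^h u(t)\|_{L^\infty(X,\mathcal L^h(X))}\le C\,t^{\alpha-h\theta}$ for $u(t)\in D(L)$; to get the correct exponent $h\theta=h/\beta$ (rather than $h/(\beta-1)$, which is what the trivial inclusion $Z^\beta_b\subset C^{\beta-1}_b$ would yield) you need a Zygmund version of Lemma~\ref{C^k}, i.e.\ $\|D^h f\|_{L^\infty}\le C\|f\|_\infty^{1-h/\beta}\|f\|_{Z^\beta_b}^{h/\beta}$. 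You assert this ``Zygmund analogue'' holds but do not prove it, and the paper does not supply it either. (It is true, but establishing it is a nontrivial extra step.)

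The paper avoids this issue entirely by a different mechanism. When $1/\theta\in\N$, instead of trying to pass through $Z^{1/\theta}_b(X)$, the paper picks any \emph{non-integer} $\beta\in(\alpha/\theta,\,1/\theta)$ and proves directly that $C^\beta_b(X)\in J_{\theta\beta}(C_b(X),D(L))$. This is done by writing an arbitrary $\varphi\in D(L)$ as $\varphi=\int_0^\infty e^{-(\lambda+1)t}P_tf\,dt$ with $f=(\lambda+1)\varphi-L\varphi$, differentiating under the integral, and invoking the smoothing estimates \eqref{nth-derivatives1} and \eqref{nth-derivativesHolder} to get $\|\varphi\|_{C^\beta_b}\le C(\lambda^{\theta\beta}\|\varphi\|_\infty+\lambda^{\theta\beta-1}\|(L-I)\varphi\|_\infty)$; optimizing in $\lambda$ gives the $J_{\theta\beta}$ inequality. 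Reiteration \eqref{reiterazione} then yields $(C_b(X),D(L))_{\alpha,\infty}\subset(C_b(X),C^\beta_b(X))_{\alpha/(\theta\beta),\infty}$, and Proposition~\ref{Prop:inclusione} finishes. Interestingly, you do mention ``invoke the Reiteration Theorem in the form \eqref{reiterazione} with the class $J_\theta$ statement'' as one of your options, but you do not identify the key ingredient that makes it work here, namely the resolvent representation combined with the semigroup smoothing bounds.
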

 \begin{proof} If $ 1/\theta \notin \N$, the statement is a direct consequence of the embedding \eqref{D(L)} and of Proposition \ref{Prop:inclusione}, recalling that if $F_1\subset F_2\subset E$ with continuous embeddings then $(E, F_1)_{\alpha, \infty} \subset (E, F_2)_{\alpha, \infty}$ with continuous embedding, for every $\alpha \in (0, 1)$. 
 
If $1/\theta\in \N$, we show that for every non integer $\beta\in (0,  1/\theta)$  the space $C^{\beta}_b(X)$ belongs to the class $J_{\theta \beta}$ between $C_b(X)$ and $D(L)$. 

Let $\beta = k + \sigma$, with $k\in \N \cup \{0\}$ and $\sigma \in (0, 1)$. 
 
For every $\varphi \in D(L)\setminus\{0\}$, set  $f = \lambda \varphi - (L- I)\varphi $,  so that  $\varphi (x) = \int_0^{\infty} e^{-(\lambda +1 )t}P_tf (x)dt $ for every $x\in X$. 
By proposition 3.7 of \cite{LR} we have  $D^j\varphi (x)(h_1, \ldots, h_j) = \int_0^{\infty} e^{-(\lambda +1 )t}(D^hP_tf )(x)(h_1, \ldots, h_j)dt $ for every 
$j \leq k$, $(h_1, \ldots, h_j) \in X^j$ and $x\in X$. By estimates \eqref{nth-derivatives1}, for every $j\leq k$ we have
$$\begin{array}{lll}
|D^j\varphi (x)(h_1, \ldots, h_j) | & \leq & \ds  \left( \int_0^1 \frac{e^{-(\lambda +1)t}}{t^{\theta j}} dt + \int_1^\infty e^{-(\lambda +1)t}dt\right) K_j \| f\|_\infty
\prod_{l=1}^j\|h_l\|
\\
\\
& \leq & \ds \left( \frac{ \Gamma (1-\theta j)}{(\lambda +1)^{(1-\theta j)}} + \frac{1}{\lambda +1}\right) K_j \| f\|_\infty \prod_{l=1}^j\|h_l\|
\end{array}$$
so that there exists $C_1>0$ such that $\|D^j\varphi\|_{L^\infty (X, \mathcal L^j(X))} \leq C_1  \lambda^{\theta\beta -1}\| f\|_\infty$ for every $j\leq k$. Similarly, using \eqref{nth-derivativesHolder} with $n=k$ and $\alpha =\sigma$, we get $[D^k\varphi]_{C^\sigma(X, \mathcal L^k(X))} \leq C_2 \lambda^{\theta\beta -1}\| f\|_\infty$ for some $C_2>0$. Summing up, there exists $C>0$ such that 
$$\| \varphi\|_{C^\beta _b(X)} \leq C \lambda^{\theta\beta -1}\| f\|_\infty \leq C (  \lambda^{\theta\beta }\|\varphi\|_\infty +  \lambda^{\theta\beta -1}\|(L-I)\varphi\|_\infty), \quad \lambda >0. $$
Choosing $\lambda = \|(L-I)\varphi\|_\infty/\|\varphi\|_\infty $ we get 
$$\| \varphi\|_{C^\beta _b(X)} \leq 2C  \|(L-I)\varphi\|_\infty^{\theta\beta} \|\varphi\|_\infty^{1-\theta\beta} \leq 2C  \| \varphi\|_{D(L)}^{\theta\beta} \|\varphi\|_\infty^{1-\theta\beta}, $$
so that $C^\beta _b(X)$ belongs to  the class $J_{\theta \beta}$ between $C_b(X)$ and $D(L)$.

 Now we fix $\beta \in (\alpha/\theta,  1/\theta)$. 
 By the Reiteration Theorem, $(C_b(X), D(L))_{\alpha, \infty}$ is continuously embedded in $(C_b(X), C^{\beta}_b(X))_{\alpha /(\theta\beta), \infty}$. In its turn, 
$(C_b(X), C^{\beta}_b(X))_{\alpha /(\theta\beta), \infty}$ is continuously embedded in  $C^{\alpha/\theta}_b(X)$ or in $Z^{\alpha/\theta}_b(X)$ by Proposition \ref{Prop:inclusione}, and the statement follows. 
\end{proof}

Under further (optimal) estimates on the moments of $\mu_t$ the spaces $(C_b(X), D(L))_{\alpha, \infty}$ may be characterized.

\begin{Proposition}
\label{pr:basic2}
Let Hypothesis \ref{Hyp:LR} hold, and assume in addition that there exist $\gamma$, $C >0$  such that  
\begin{equation}
\label{moments}
 \int_X \|x\|^\gamma \,\mu_t(dx) \leq Ct^{\gamma \theta}, \quad 0<t\leq 1. 
\end{equation}
Then for every $\alpha\in (0,  1\wedge  \theta)\cap (0, \gamma \theta]$ we have
\begin{equation}
\label{caratt1}
(C_b(X), D(L))_{\alpha, \infty} = C^{\alpha/\theta}_b(X)\cap Y_{\alpha},
 \end{equation}
and the respective norms are equivalent. 

If $\theta <1$ and moreover the measures $\mu_t$ are centered for every $t>0$, \eqref{caratt1} holds for 
every $\alpha\in (0,  1\wedge 2\theta)\cap (0, \gamma \theta]$, $\alpha\neq \theta$, still with equivalence of the respective norms. 
 %
 %
 \end{Proposition}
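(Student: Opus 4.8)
The plan is to compare $P_t$ with the shift semigroup $S_tf(x)=f(T_tx)$, using \eqref{moments} to control the difference $P_tf-S_tf$. The first ingredient I would record is an elementary consequence of \eqref{moments}: since each $\mu_t$ is a probability measure and $s\mapsto s^{\sigma/\gamma}$ is concave for $0<\sigma\le\gamma$, Jensen's inequality gives, for $0<t\le1$,
$$\int_X\|y\|^\sigma\,\mu_t(dy)\le\Big(\int_X\|y\|^\gamma\,\mu_t(dy)\Big)^{\sigma/\gamma}\le C^{\sigma/\gamma}\,t^{\sigma\theta},\qquad 0<\sigma\le\gamma.$$
Taking $\sigma=\alpha/\theta$, which lies in $(0,\gamma]$ precisely because $\alpha\le\gamma\theta$ and in $(0,1)$ because $\alpha<\theta$, together with the bound $|f(T_tx+y)-f(T_tx)|\le[f]_{C^{\alpha/\theta}(X)}\|y\|^{\alpha/\theta}$ valid for $f\in C^{\alpha/\theta}_b(X)$, I would obtain the key estimate
$$\|P_tf-S_tf\|_\infty=\sup_{x\in X}\Big|\int_X\big(f(T_tx+y)-f(T_tx)\big)\mu_t(dy)\Big|\le[f]_{C^{\alpha/\theta}(X)}\int_X\|y\|^{\alpha/\theta}\,\mu_t(dy)\le C_1\,\|f\|_{C^{\alpha/\theta}_b(X)}\,t^{\alpha}$$
for $0<t\le1$ and all $f\in C^{\alpha/\theta}_b(X)$, with $C_1$ depending only on $C,\gamma,\theta,\alpha$.

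With this in hand, \eqref{caratt1} for $\alpha\in(0,1\wedge\theta)\cap(0,\gamma\theta]$ follows from the earlier results. Recall that by \eqref{contraz}, $P_t$ is a contraction semigroup, so $\beta=0$ and the full equivalence of Proposition \ref{Pr:LionsPeeetre} applies. For the inclusion $\subset$: if $f\in(C_b(X),D(L))_{\alpha,\infty}$ then $f\in C^{\alpha/\theta}_b(X)$ by Proposition \ref{pr:basic} (here $\alpha/\theta<1$ is not an integer), and $\sup_{0<t\le1}t^{-\alpha}\|P_tf-f\|_\infty<+\infty$ by Proposition \ref{Pr:LionsPeeetre}; combining these with the key estimate, $\|S_tf-f\|_\infty\le\|S_tf-P_tf\|_\infty+\|P_tf-f\|_\infty\le C\,t^\alpha$ for $0<t\le1$, while $\|S_tf-f\|_\infty\le2\|f\|_\infty\le2\|f\|_\infty\,t^\alpha$ for $t\ge1$, so $f\in Y_\alpha$. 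For the inclusion $\supset$: if $f\in C^{\alpha/\theta}_b(X)\cap Y_\alpha$, then $\|P_tf-f\|_\infty\le\|P_tf-S_tf\|_\infty+\|S_tf-f\|_\infty\le\big(C_1\|f\|_{C^{\alpha/\theta}_b(X)}+[f]_{Y_\alpha}\big)t^\alpha$ for $0<t\le1$, hence $f\in(C_b(X),D(L))_{\alpha,\infty}$ by Proposition \ref{Pr:LionsPeeetre}. Tracking the constants along these two chains, together with the norm equivalences in Propositions \ref{Pr:LionsPeeetre} and \ref{pr:basic}, yields the equivalence of norms.

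For the second assertion ($\theta<1$ and $\mu_t$ centered) the subrange $\alpha<\theta$ is already covered, so the task is $\alpha\in(\theta,1\wedge2\theta)\cap(0,\gamma\theta]$, where $\alpha/\theta=1+\rho$ with $\rho\in(0,1)$. Here the crude bound on $|f(T_tx+y)-f(T_tx)|$ is too weak, so I would exploit centering. Since $\gamma\ge\alpha/\theta>1$, each $\mu_t$ has finite first moment (again by Jensen), so $\int_X\ell(y)\,\mu_t(dy)=0$ for every $\ell\in X^*$ by symmetry, and in particular $\int_X Df(T_tx)(y)\,\mu_t(dy)=0$. For $f\in C^{\alpha/\theta}_b(X)$, the first-order Taylor estimate $|f(z+y)-f(z)-Df(z)(y)|\le(1+\rho)^{-1}[Df]_{C^\rho(X,X^*)}\|y\|^{1+\rho}$ then gives
$$\|P_tf-S_tf\|_\infty=\sup_{x\in X}\Big|\int_X\big(f(T_tx+y)-f(T_tx)-Df(T_tx)(y)\big)\mu_t(dy)\Big|\le C_2\,\|f\|_{C^{\alpha/\theta}_b(X)}\int_X\|y\|^{\alpha/\theta}\,\mu_t(dy),$$
and the moment estimate with $\sigma=\alpha/\theta\le\gamma$ bounds the right-hand side by $C_3\|f\|_{C^{\alpha/\theta}_b(X)}\,t^\alpha$ for $0<t\le1$. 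This is again the key estimate, so the very same two-sided argument as above applies verbatim, with Proposition \ref{pr:basic} now providing $f\in C^{\alpha/\theta}_b(X)$ with $\alpha/\theta\in(1,2)\setminus\N$ (this is exactly where the hypothesis $\alpha\neq\theta$ enters, to avoid the Zygmund case), giving \eqref{caratt1} and the norm equivalence on this larger range.

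All the computations are routine; the points requiring care are that the moment bound \eqref{moments}, stated only for $0<t\le1$, is indeed used only there (Proposition \ref{Pr:LionsPeeetre}(i) and the $t\ge1$ part of the $Y_\alpha$ seminorm do not involve $\mu_t$), and, in the centered case, the justification that the barycenter of $\mu_t$ vanishes and that $Df(T_tx)$ may be pulled inside the integral, which is legitimate because $\int_X\|y\|\,\mu_t(dy)<+\infty$ whenever $\gamma>1$. I expect this last point to be the only genuine subtlety; the remainder is bookkeeping of constants.
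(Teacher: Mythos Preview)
Your proof is correct and follows the same route as the paper: estimate $\|P_tf-S_tf\|_\infty$ via the moment bound (H\"older/Jensen on \eqref{moments}, and in the centered case subtracting the first-order term $Df(T_tx)(y)$), then run the triangle inequality $\|P_tf-f\|_\infty\le\|P_tf-S_tf\|_\infty+\|S_tf-f\|_\infty$ and conclude both inclusions via Propositions~\ref{Pr:LionsPeeetre} and~\ref{pr:basic}. You are in fact more careful than the paper on the inclusion $(C_b(X),D(L))_{\alpha,\infty}\subset Y_\alpha$: the paper asserts the whole $\subset$ direction follows from Proposition~\ref{pr:basic}, but that proposition only gives the H\"older-space part, and the $Y_\alpha$ part needs exactly the key estimate you wrote down.
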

\begin{proof} 
Proposition \ref{pr:basic} yields the embeddings $\subset$. Concerning the embeddings $\supset$, since $\alpha/\theta  \leq \gamma$, assumption \eqref{moments} and the H\"older inequality yield
\begin{equation}
\label{stima_momento}
\int_X \|y\|^{\alpha/\theta} \,\mu_t(dy) \leq Ct^{\alpha }, \quad   0<t\leq 1.
\end{equation}

Fix $f\in  C^{\alpha/\theta}_b(X)\cap Y_{\alpha}$. 
If $\alpha <\theta $, for every $t\in (0, 1]$ and $x\in X$ using \eqref{stima_momento} we get 
$$|P_tf(x) - f(T_tx)|  \leq  \int_X |f(T_tx+y) - f(T_tx)| \,\mu_t(dy) =   \int_{X } [f]_{C^{\alpha/\theta}} \|y\|^{\alpha/\theta}   \,\mu_t(dy)
 \leq C \|f\|_{C^{\alpha/\theta}_b(X)} t^{\alpha }. $$
Since $f\in Y_\alpha$, 
\begin{equation}
\label{triang}
|P_tf(x) - f(x)|  \leq  |P_tf(x) - f(T_tx)|  + |f(T_tx) - f(x)| \leq C  \|f\|_{C^{\alpha/\theta}_b(X)}  t^{\alpha } + [f]_{Y\alpha} t^{\alpha }, \quad 0<t\leq 1, \; x\in X, 
\end{equation}
and the first statement follows from Proposition   \ref{Pr:LionsPeeetre}. 

Let now $\theta <1$ and assume that  the measures $\mu_t $ are centered for every $t>0$. Then, 
$\int_X \varphi (y)\mu_t(dy)$ $ =0$ for every $\varphi\in X^*$ and $t>0$. Take  $\alpha \in (0,1) \cap (\theta, 2\theta) \cap (0, \gamma\theta]$. 
For every  $f\in  C^{\alpha/\theta}_b(X)\cap Y_\alpha $ and  $t\in (0, 1]$,   $x\in X$, we have
$$|P_tf(x) - f(T_tx)| = \left|  \int_X f(T_tx+y) - f(T_tx) - Df(T_tx)(y)  \,\mu_t(dy) \right|, $$
so that, using the estimate  
$$|f(z+y) - f(z) - Df(z)(y) |  = \left|\int_0^1(Df(z+ \sigma y)( y)  - Df(z)(y))d\sigma \right| \leq [Df]_{C^{\alpha/\theta -1}(X, X^*)}\|y\|^{\alpha/\theta}, $$
that holds for every $ z$, $y\in X$, and \eqref{stima_momento},  we get 
$$|P_tf(x) - f(T_tx)|   \leq    [Df]_{C^{\alpha/\theta -1}(X, X^*)} \int_X \|y\|^{\alpha/\theta}\mu_t(dy) 
\leq  Ct^{\alpha }\|f\|_{C^{\alpha/\theta}_b(X)}, $$
and, as before, \eqref{triang} holds, so that $f\in (C_b(X), D(L))_{\alpha, \infty}$ by Proposition  \ref{Pr:LionsPeeetre}. 
\end{proof}

\section{Examples}
\label{Examples}

\subsection{Ornstein-Uhlenbeck operators in infinite dimension}
\label{Sect.OU}

Here $X$ is an infinite dimensional separable Banach space and the measures $\mu_t$ are Gaussian and centered. 
We refer to the book \cite{Boga} for the general theory of Gaussian measures in Banach spaces. 

Ornstein-Uhlenbeck semigroups are defined as follows. 
We fix a strongly continuous semigroup $T_t$ in $X$ generated by a linear operator $A:D(A)\subset X\mapsto X$, 
and an operator $Q\in \mathscr L( X^*, X)$ which is  non-negative (namely, $f(Qf )\geq 0$ for every $f\in X^*$) and symmetric (namely, $f(Qg) = g(Qf)$ for every $f$, $g\in X^*$). We assume that the operators $Q_t$ defined by 
$$Q_t := \int_0^t T_sQT_s^*\,ds, \quad t>0, $$
are Gaussian covariances, and for all $t>0$ we denote by $\mu_t$ the Gaussian measure with mean $0$ and covariance $Q_t$. In this case the measures $\mu_t$ satisfy \eqref{semigruppo}, 
and taking $\mu_0:=\delta_0$ the mapping $t\mapsto \mu_t$ is weakly continuous in $[0, +\infty)$. The
semigroup $P_t$ defined by  \eqref{P_t} is called {\em Ornstein-Uhlenbeck semigroup}. 

As well known, if $X$ is a Hilbert space any bounded self-adjoint, non-negative operator is the covariance of a Gaussian measure if and only if its trace is finite. 
If $X$ is just a Banach space, establishing whether a given non-negative symmetric operator in $\mathscr L( X^*, X)$ is a Gaussian covariance is not as simple;  necessary and sufficient conditions are in \cite{vNW}. 

The theory of Ornstein-Uhlenbeck operators and semigroups in infinite dimensional spaces is very rich; we refer  to the book \cite{DPZbrutto} for the basic theory  in Hilbert spaces, to the survey paper \cite{GVN} for the theory in Banach spaces, and to the more recent survey paper \cite{LP} for an up-to-date account. 

Hypothesis \ref{Hyp:LR} is satisfied provided $T_t (X)$ is contained in the Cameron-Martin space $H_t$
of $\mu_t$ for every $t>0$, and there exist $C$, $\theta >0$, $\omega \in \R$ such that 
\begin{equation}
\|T_tx\|_{H_t} \leq \frac{Ce^{\omega t}}{t^{\theta}}\|x\|, \quad t>0, \; x\in X. 
\label{holder}
\end{equation}
See  \cite[\S 5.1]{LR} for more details. 
Propositions  \ref{Pr:strong_cont} and  \ref{pr:basic} yield

\begin{Proposition}
\label{conseguenze_OU}
Assume that  \eqref{holder} holds. Then  
$$\overline{D(L)} = BUC(X)\cap Y_{0}. $$
Moreover, for every $\alpha \in (0, 1)$, 
 $$(C_b(X), D(L))_{\alpha, \infty} \subset \left\{ \begin{array}{ll}C^{\alpha/\theta}_b(X), & \alpha/\theta \notin \N, 
 \\
 \\
 Z^{\alpha/\theta}_b(X), & \alpha/\theta \in \N, 
 \end{array}\right. $$
 with continuous embedding.
\end{Proposition}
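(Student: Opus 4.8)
The plan is to derive Proposition~\ref{conseguenze_OU} directly as a special case of the general results already established, by checking that the hypotheses of Propositions~\ref{Pr:strong_cont} and~\ref{pr:basic} are met whenever \eqref{holder} holds. First I would recall that, for Ornstein-Uhlenbeck semigroups with Gaussian measures $\mu_t$, the condition $T_t(X)\subset H_t$ together with \eqref{holder} is exactly the situation treated in \cite[\S 5.1]{LR}, where it is shown that each $\mu_t$ is Radon (recall $X$ is separable) and Fomin differentiable along $T_th$, with the Fomin derivative $\beta_{t,h}$ satisfying $\|\beta_{t,h}\|_{L^1(X,\mu_t)}\le \|T_th\|_{H_t}$; hence \eqref{holder} gives precisely estimate \eqref{ipotesi_intro} with the same constants $C$, $\omega$ and with the given $\theta$. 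This verifies Hypothesis~\ref{Hyp:LR}.

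Next, from Hypothesis~\ref{Hyp:LR} and Lemma~\ref{strong-Feller} the semigroup $P_t$ is strong-Feller and, in particular, maps $C_b(X)$ into $C^1_b(X)\subset BUC(X)$ for every $t>0$. This is exactly the hypothesis required by Proposition~\ref{Pr:strong_cont}, whose conclusion $\overline{D(L)} = BUC(X)\cap Y_0$ is the first assertion. The second assertion, the embedding of $(C_b(X),D(L))_{\alpha,\infty}$ into $C^{\alpha/\theta}_b(X)$ (or into $Z^{\alpha/\theta}_b(X)$ when $\alpha/\theta\in\N$) with continuous embedding, is then immediate from Proposition~\ref{pr:basic}, which has Hypothesis~\ref{Hyp:LR} as its sole assumption; nothing further needs to be verified.

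There is essentially no obstacle here: the proposition is a packaging statement, and the only point requiring a word of care is the reduction of \eqref{holder} to \eqref{ipotesi_intro}, i.e.\ recalling the inequality $\|\beta_{t,h}\|_{L^1(X,\mu_t)}\le\|T_th\|_{H_t}$ (valid for Gaussian measures by, e.g., \cite[Thm. 3.3.7(i)]{BogaDiff} combined with the Cameron-Martin formula, or directly as in \cite[\S 5.1]{LR}) and the separability of $X$ which guarantees that Borel measures are Radon. Once these are in place, the proof is a one-line invocation of the two cited Propositions, and I would write it as such.
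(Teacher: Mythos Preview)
Your proposal is correct and follows exactly the route taken in the paper: the proposition is stated there as an immediate consequence of Propositions~\ref{Pr:strong_cont} and~\ref{pr:basic}, once one knows (from \cite[\S 5.1]{LR}) that \eqref{holder} implies Hypothesis~\ref{Hyp:LR}. Your only addition is the explicit remark that Lemma~\ref{strong-Feller} supplies the hypothesis $P_t(C_b(X))\subset BUC(X)$ needed for Proposition~\ref{Pr:strong_cont}, which the paper leaves implicit.
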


 Therefore, for any $f\in C_b(X)$ and $\alpha\in (0,1)$, $\alpha/\theta \notin \N$ we have
$$\sup_{t>0, \,x\in X}\frac{ |P_tf(x) -f(x)|}{t^\alpha} <\infty \Longrightarrow f\in C^{\alpha/\theta}_b(X), $$
while if $\alpha/\theta \in \N$, 
$$\sup_{t>0, \,x\in X}\frac{ |P_tf(x) -f(x)|}{t^\alpha} <\infty \Longrightarrow f\in Z^{\alpha/\theta}_b(X). $$

We recall that if $X$ is a Hilbert space the Cameron-Martin space of $\mu_t$ is just the range of $Q_t^{1/2}$, therefore \eqref{holder} holds if and only if  $T_t(X)\subset Q_t^{1/2}(X)$ for every $t$  and $\sup_{0<t\leq 1}t^\theta \|Q_t^{-1/2}T_t\|_{\mathcal L (X)}<+\infty$, where $Q_t^{-1/2}$ is the pseudo-inverse of $Q_t$. 

Basic examples such that  \eqref{holder} holds, with any $\theta \geq 1/2$, were given in \cite[Ex. 6.2.11]{DPZbrutto}. 
A general family of Ornstein-Uhlenbeck semigroups  such that  \eqref{holder} holds with $\theta =1/2$ was considered  in \cite{ABP}, where it was  proved that if
$t\mapsto P_tf \in C^{\alpha} ([0, +\infty); C_b(X))$ with $\alpha <1/2$,  then $f\in 
 C^{2\alpha }_b(X) $. The norm $\|f\|_\infty + [f]^{(2)}_\alpha $ was called ``semigroup norm" there.  

However, in such examples condition \eqref{moments} is not satisfied if $X$ is infinite dimensional, for any $\gamma >0$, so that Proposition \ref{pr:basic2} is not applicable. 

Instead, if $X=\R^N$ and $Q=Q^*>0$,  Hypothesis \ref{Hyp:LR} holds with $\theta = 1/2$ and \eqref{moments} holds for any $\gamma >0$. The equalities 
 $\overline{D(L)} = BUC(\R^N)\cap Y_{0}$ and $(C_b(\R^N), D(L))_{\alpha, \infty} = C^{2\alpha }_b(\R^N) \cap Y_\alpha$ for $\alpha \in (0, 1)\setminus \{1/2\}$,  $(C_b(\R^N), D(L))_{1/2, \infty} = Z^{1}_b(\R^N) \cap Y_{1/2}$ were proved in \cite{DPL}, so that applying Proposition  \ref{pr:basic2}  does not give any new information.

\subsection{Ornstein-Uhlenbeck operators with fractional diffusion in finite dimension}
\label{Sect.OU_diff_fraz}

Here we take $X= \R^N$ and we fix a symmetric positive definite matrix $Q$, a matrix $A$,  and any $s\in(0,1)$. The corresponding 
Ornstein-Uhlenbeck 
operator with fractional diffusion is given by 
\begin{equation}
\label{OUfraz}
(\mathcal L u)(x) =  \frac{1}{2}( {\text Tr}^s(QD^2u) )(x) + \langle Ax, \nabla u(x)\rangle , \quad x\in \R^N. 
\end{equation}
where  Tr$^s(QD^2)$ is the pseudo-differential operator with symbol $- \langle Q\xi, \xi\rangle^s$. 
The associated semigroup is given by (e.g., \cite{AB,PZ})
$$P_t f(x)  = \int_{\R^N} f(e^{tA}x + y) g_t(y)dy, \quad t>0, \; f\in C_b(\R^N), \; x\in \R^N, $$
where
\begin{equation}
\label{g_t}
g_t(y)  =  \frac{1}{(2\pi)^N} \int_{\R^N} e^{- \frac{1}{2}\int_0^t \|Q^{1/2}e^{\sigma A^*}\xi \|^{2s} d \sigma } e^{ -i\langle \xi, y\rangle}d\xi, \quad t>0, \;y\in \R^n. 
\end{equation}
So, $P_t$ is written in the form \eqref{P_t}, with $T_t = e^{-tA}$, $\mu_t(dy) := g_t(y)dy$, for $t>0$. For $t=0$ we set $P_0=I$ and $\mu_0=\delta_0$. 

It has been checked in  \cite[Sect. 4.2]{LR} that  $P_t$ is a generalized Mehler semigroup, and moreover   the function $g_t$ 
belongs to $W^{1,1}(\R^N)$ for every $t>0$ and 
\begin{equation}
\label{LR}
 \sup_{0<t\leq 1} t^{1/(2s)} \left\| \frac{\partial g_t}{\partial x_k}\right\|_{L^1(\R^N)} <+\infty, \quad k=1, \ldots, N, 
 \end{equation}
so that  Hypothesis \ref{Hyp:LR}  is satisfied with $\theta =1/(2s)$. Moreover, since $g_t(y) =g_t(-y)$ for every $y\in \R^N$ and $t>0$, then each $\mu_t$ is centered.  
Corollary \ref{Cor:strong_cont}  and Proposition
\ref{Pr:strong_cont} yield  

\begin{Proposition}
\label{strong_cont-s}
For every $f\in C_b(\R^N)$, we have
$$\lim_{t\to 0} \|P_tf -f\|_{\infty} =0   \Longleftrightarrow  f\in \overline{D(L)} \Longleftrightarrow f\in BUC(\R^N)\;  \text{and} \; \lim_{t\to 0}\|f(e^{tA}\cdot) -f\|_{\infty} =0. $$
\end{Proposition}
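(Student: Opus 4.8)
The plan is to recognize that Proposition~\ref{strong_cont-s} is an immediate application of the abstract results already established in the paper, once we verify that the relevant hypotheses hold in the present setting. First I would note that, as recalled just before the statement, $P_t$ is a generalized Mehler semigroup with $T_t = e^{-tA}$ and $\mu_t(dy) = g_t(y)\,dy$, and that Hypothesis~\ref{Hyp:LR} holds with $\theta = 1/(2s)$ thanks to \eqref{LR} and \cite[Sect.~4.2]{LR}. In particular the general machinery of Section~\ref{Sect:good} applies to $P_t$ (since $(t,x)\mapsto P_tf(x)$ is continuous, by \cite[Lemma 2.1]{BRS} or by Lemma~\ref{strong-Feller}), so Corollary~\ref{Cor:strong_cont} gives directly the first equivalence: $\lim_{t\to 0}\|P_tf-f\|_\infty = 0 \iff f\in\overline{D(L)}$.

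For the second equivalence I would invoke Proposition~\ref{Pr:strong_cont}, which asserts $\overline{D(L)} = BUC(X)\cap Y_0$ whenever $P_t$ maps $C_b(X)$ into $BUC(X)$ for every $t>0$. The one thing to check is precisely this smoothing property. But under Hypothesis~\ref{Hyp:LR}, Lemma~\ref{strong-Feller} tells us that for every $f\in B_b(X)$ (in particular every $f\in C_b(X)$) and every $t>0$ we have $P_tf\in C^k_b(X)\subset BUC(X)$; in fact $P_tf$ is Lipschitz continuous for $t>0$, as shown in the proof of that lemma via the bound $|P_tf(x+h)-P_tf(x)|\le Ce^{\omega t}t^{-\theta}\|h\|$. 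Hence the hypothesis of Proposition~\ref{Pr:strong_cont} is satisfied, and we conclude $\overline{D(L)} = BUC(\R^N)\cap Y_0$, which is exactly the statement $f\in BUC(\R^N)$ and $\lim_{t\to 0}\|f(e^{tA}\cdot)-f\|_\infty = 0$ once we recall that $Y_0$ is defined as $\{f\in C_b(X):\lim_{t\to 0}\|f(T_t\cdot)-f\|_\infty = 0\}$ with $T_t = e^{-tA}$ here.

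There is no real obstacle: the proof is purely a matter of assembling Corollary~\ref{Cor:strong_cont}, Lemma~\ref{strong-Feller}, and Proposition~\ref{Pr:strong_cont}. The only point requiring a sentence of justification is that Hypothesis~\ref{Hyp:LR} is in force, which is where $X = \R^N$ (so that all Borel measures are Radon) and estimate \eqref{LR} enter; these were already verified in \cite{LR} and recalled above. One might also remark, for completeness, that $T_t = e^{-tA}$ is trivially strongly continuous and that $t\mapsto\mu_t$ is weakly continuous with $\mu_0 = \delta_0$, so that $P_t$ genuinely falls within the framework of Section~4; but these facts are also already asserted in the text preceding the proposition. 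Thus the proof reduces to citing the three results in sequence.
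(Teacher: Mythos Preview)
Your proposal is correct and matches the paper's own treatment exactly: the paper does not give a separate proof of Proposition~\ref{strong_cont-s} but simply records that it is yielded by Corollary~\ref{Cor:strong_cont} and Proposition~\ref{Pr:strong_cont}, once Hypothesis~\ref{Hyp:LR} has been verified (as done just above via \eqref{LR}). Your additional remark that Lemma~\ref{strong-Feller} supplies the smoothing $P_t:C_b\to BUC$ needed for Proposition~\ref{Pr:strong_cont} is the right justification and is implicit in the paper.
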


To study H\"older continuity of $t\mapsto P_tf$ through  Proposition \ref{pr:basic2} it 
remains to check that  \eqref{moments} holds, for some $\gamma >0$.

\begin{Lemma} 
\label{Le:momentiOU}
For every $\gamma < 2s$ there is $C=C(\gamma)$ such that
\begin{equation}
\label{momenti_OUfraz}
\int_{\R^N} \|y\|^\gamma \,g_t(y)dy \leq Ct^{\gamma/(2s)}, \quad 0<t\leq 1. 
\end{equation}
\end{Lemma}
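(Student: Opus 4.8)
The plan is to exploit the fact that, by \eqref{g_t}, $g_t$ is the inverse Fourier transform of $e^{-\psi_t}$, where $\psi_t(\xi):=\tfrac12\int_0^t\|Q^{1/2}e^{\sigma A^*}\xi\|^{2s}\,d\sigma$; concretely this gives $\int_{\R^N}e^{i\langle\xi,y\rangle}g_t(y)\,dy=e^{-\psi_t(\xi)}$, and since $\psi_t(\xi)$ is real and nonnegative, taking real parts and using $\int_{\R^N}g_t=1$ yields the exact identity $\int_{\R^N}(1-\cos\langle\xi,y\rangle)g_t(y)\,dy=1-e^{-\psi_t(\xi)}$. First I would record the classical integral representation of the power function: for $0<\gamma<2$ there is $c_{N,\gamma}\in(0,\infty)$ with $\|y\|^\gamma=c_{N,\gamma}^{-1}\int_{\R^N}\frac{1-\cos\langle\xi,y\rangle}{\|\xi\|^{N+\gamma}}\,d\xi$ for all $y\in\R^N$, obtained by rotating $y$ onto a coordinate axis and rescaling (the integral defining $c_{N,\gamma}$ converges because $\gamma<2$ near the origin and $\gamma>0$ at infinity). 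Since $\gamma<2s$ and $s<1$ force $\gamma<2$, this representation applies in the whole range of the statement.

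Next, I would substitute this representation into $\int_{\R^N}\|y\|^\gamma g_t(y)\,dy$ and apply Tonelli's theorem, which is legitimate since $g_t\ge0$ and the remaining integrand is nonnegative, to obtain
$$\int_{\R^N}\|y\|^\gamma g_t(y)\,dy=c_{N,\gamma}^{-1}\int_{\R^N}\frac{1-e^{-\psi_t(\xi)}}{\|\xi\|^{N+\gamma}}\,d\xi.$$
It then remains to estimate $\psi_t$. Because $Q^{1/2}$ is a bounded matrix and $\{e^{\sigma A^*}:\sigma\in[0,1]\}$ is bounded in operator norm, there is $\Lambda>0$ with $\|Q^{1/2}e^{\sigma A^*}\xi\|\le\Lambda^{1/(2s)}\|\xi\|$ for all $\sigma\in[0,1]$ and $\xi\in\R^N$; hence $\psi_t(\xi)\le\Lambda t\|\xi\|^{2s}$ for $0<t\le1$, and therefore $1-e^{-\psi_t(\xi)}\le\min\{\Lambda t\|\xi\|^{2s},1\}$ using $1-e^{-u}\le\min\{u,1\}$.

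Finally I would insert this bound and compute in polar coordinates: with $r_0:=(\Lambda t)^{-1/(2s)}$ and $\omega_{N-1}$ the area of the unit sphere,
$$\int_{\R^N}\frac{\min\{\Lambda t\|\xi\|^{2s},1\}}{\|\xi\|^{N+\gamma}}\,d\xi=\omega_{N-1}\left(\Lambda t\int_0^{r_0}r^{2s-1-\gamma}\,dr+\int_{r_0}^{\infty}r^{-1-\gamma}\,dr\right).$$
The first integral converges at the origin precisely because $2s-\gamma>0$, the second at infinity because $\gamma>0$, and both terms equal a constant depending only on $N,\gamma,s$ times $(\Lambda t)^{\gamma/(2s)}$; collecting constants gives $\int_{\R^N}\|y\|^\gamma g_t(y)\,dy\le C(\gamma)\,t^{\gamma/(2s)}$ for $0<t\le1$, as claimed.

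The step I expect to require the most care is the bookkeeping around the Fourier side: checking that $e^{-\psi_t}$ really is the characteristic function of $\mu_t$ with the sign and normalization conventions of \eqref{g_t} (here one uses $\psi_t(\xi)=\psi_t(-\xi)$ and the lower bound $\psi_t(\xi)\ge\kappa t\|\xi\|^{2s}$ to ensure $e^{-\psi_t}\in L^1$, so that \eqref{g_t} makes sense and Fourier inversion applies), that $\psi_t$ is real so the cosine identity is exact, and that Tonelli is applicable. Everything after that — the upper bound on $\psi_t$ and the one-dimensional integral — is elementary, and the only hypotheses actually used are that $Q$ is bounded (positive definiteness is not even needed for the upper bound), that $e^{\sigma A^*}$ is bounded on $[0,1]$, and that $0<\gamma<2s$.
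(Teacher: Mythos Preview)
Your argument is correct and takes a genuinely different route from the paper's proof. The paper proceeds probabilistically: it first reduces to $Q=I$ by a change of variables, then interprets $\mu_t$ as the law of the solution $X_t(0)$ of the SDE $dX_t=AX_t\,dt+dL_t$ driven by a $2s$-stable L\'evy process $\{L_t\}$; a Gronwall argument gives $|X_t|\le C\sup_{0\le\tau\le t}|L_\tau|$, and then the scaling property of the running supremum of $\{L_t\}$ together with the known moment bound $\E[(\sup_{0\le\tau\le1}|L_\tau|)^\gamma]<\infty$ for $\gamma<2s$ yields $\E(|X_t|^\gamma)\le C\,t^{\gamma/(2s)}$.

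Your approach is purely Fourier-analytic: the fractional-Laplacian representation $\|y\|^\gamma=c_{N,\gamma}^{-1}\int(1-\cos\langle\xi,y\rangle)\,\|\xi\|^{-N-\gamma}\,d\xi$ combined with the characteristic-function identity $\int(1-\cos\langle\xi,y\rangle)g_t(y)\,dy=1-e^{-\psi_t(\xi)}$ reduces the moment to a single $\xi$-integral that is estimated directly from the elementary upper bound $\psi_t(\xi)\le\Lambda t\|\xi\|^{2s}$. This avoids the stochastic machinery entirely, does not require the preliminary reduction to $Q=I$, and, as you note, uses only boundedness of $Q^{1/2}e^{\sigma A^*}$ on $[0,1]$ for the upper bound (positivity of $Q$ enters only to justify $e^{-\psi_t}\in L^1$ so that the inversion formula \eqref{g_t} defines a genuine density). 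The paper's approach, on the other hand, situates the estimate within the well-developed theory of stable processes and would extend more readily to drifts that are not linear; your approach is more self-contained and arguably the natural one given that $g_t$ is introduced via its Fourier transform.
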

\begin{proof}
By the change of variables $Q^{1/2}\xi = \eta$ in \eqref{g_t} we obtain
$$g_t(y) =    \frac{1}{(2\pi)^N(\text{Det}\,Q)^{1/2}} \int_{\R^N} e^{- \frac{1}{2}\int_0^t \| e^{\sigma B^*}\eta \|^{2s} d \sigma } e^{ -i\langle \xi, Q^{-1/2}y\rangle}d\xi,$$
with $B= Q^{1/2}AQ^{-1/2}$, and  the proof of  \eqref{momenti_OUfraz} is reduced to the case $Q=I$, just using the inverse change of variables $ Q^{-1/2}y =x$ in the left-hand side of \eqref{momenti_OUfraz}. 

So, without loss of generality we may assume $Q=I$, in which case 
$P_t$  is the transition semigroup of a stochastic differential equation,  
\begin{equation}
\label{stoch0}
dX_t = AX_t +  dL_t, \quad X_0 =x, 
\end{equation}
where $\{L_t:\;t\geq 0\}$ is a $2s$-stable standard  L\'evy process, whose laws $\nu_t$ have Fourier transforms $\widehat{\nu}_t(h)= e^{-t |h|^{2s}}$ for every $t>0$. See  \cite{PZ} for 
this explicit example, and \cite{B,S} for the general theory of L\'evy processes.  So, we have
\begin{equation}
\label{trans}
P_tf(x) = \E (f(X_t)), \quad t>0, 
\end{equation}
$X_t(x) $ being the (unique) mild solution to \eqref{stoch0}, and 
$$\int_{\R^N} |y|^\gamma g_t(y)\, dy = P_t(|\cdot|^\gamma )(0) ) = \E (|X_t(0)|^\gamma), \quad t>0. $$
Estimates for the above integrals are classical if $A=0$,  in which case $X_t = x + L_t$ and 
$\int_{\R^N} |y|^\gamma g_t(y)dy = \E (|L_t|^\gamma) \leq c t^{\gamma/(2s)}$ for $\gamma <2s$, while $\int_{\R^N} |y|^\gamma g_t(y)dy =+\infty$ for $\gamma \geq 2s$ (e.g.,  \cite[Ex. 25.10]{S}). 

In the general case,  by \eqref{stoch0} (with $x=0$) we obtain  $X_t- L_t = \int_0^t AX_\tau\,d\tau$, so that 
$|X_t| \leq  |L_t| + \|A\| \int_0^t |X_\tau|\,d\tau $,  and by the Gronwall Lemma there exists $C>0$ such that 
 $$|X_t| \leq C\sup_{0<\tau \leq t}|L_\tau|, \quad 0<t\leq 1, $$
and therefore 
\begin{equation}
\label{confronto}
\E (|X_t|^\gamma) \leq  C\E[(\sup_{0\leq \tau \leq t}|L_\tau|)^\gamma], \quad 0<t\leq 1. 
\end{equation}
To estimate the right hand side, we notice that for every $t\in (0, 1]$ we have
$$\E[\sup_{0\leq \tau \leq t}|L_\tau|]^\gamma = \E[\sup_{0\leq \tau \leq 1}|L_{t\tau}|^\gamma] = t^{\gamma/(2s )}  \E[(\sup_{0\leq \tau \leq 1} t^{-1/(2s)}|L_{t\tau}|)^\gamma ]. $$
It is known (e.g. \cite[Ch. 8]{B}) that the process $\{L^*_r:= \sup_{0\leq \tau \leq r} |L_\tau|:\; r\geq 0\}$ enjoys the scaling property of index $2s$, namely for every $k>0$ the rescaled process $\{k^{-1/(2s)}L^*_{kr}:\; r\geq 0\}$ has the same finite dimensional distributions of $\{L^*_r:\; r\geq 0\}$; in particular for every $r>0$ and $k>0$  the random variables $k^{-1/(2s)}L^*_{kr}$ and  $L^*_r$ have the same law. 
Taking $k=t$, $r=1$, we obtain 
$$ \E[(\sup_{0\leq \tau \leq 1} t^{-1/(2s)}|L_{t\tau}|)^\gamma ] =  \E[(\sup_{0\leq \tau \leq 1} |L_{\tau}|)^\gamma ]$$
and the latter is finite, due to \cite[Ex. 25.10, Thm. 25.18]{S}. Replacing in \eqref{confronto}, we get $\E (|X_t|^\gamma) \leq Ct^{\gamma/(2s )} $, and \eqref{momenti_OUfraz} follows. 
\end{proof}

Applying the results of Sections 3 and 4 we obtain the following proposition. We recall that  $Y_\alpha = \{ f\in C_b(\R^N):\; \sup_{t>0, \,x\in \R^N} t^{-\alpha}| f(e^{tA}x) - f(x)|<+\infty\}$. 

\begin{Proposition}
(i) For every $s\in (0,1)$ we have
$$\overline{D(L)} = \{ f\in BUC(\R^N):\; \lim_{t\to 0} \|f(e^{tA}\cdot )-f\|_{\infty} =0\}, $$
so that, given any $f\in C_b(\R^N)$, $P_tf$ converges uniformly to $f$ as $t\to 0$ if and only if $f\in BUC(\R^N)$ and $\lim_{t\to 0} \|f(e^{tA}\cdot) -f\|_{\infty} =0$. 

(ii) For every $s\in (0,1)$ and $\alpha \in (0, 1)\setminus \{1/(2s)\}$ we have
$$(C_b(\R^N), D(L))_{\alpha, \infty} = C^{2s\alpha}_b(\R^N) \cap Y_{\alpha} $$
so that, given any $f\in C_b(\R^N)$, we have $\sup_{t>0}t^{-\alpha}\|P_tf -f\|_{\infty} <+\infty$ if and only if $ C^{2s\alpha}_b(\R^N) \cap Y_{\alpha}$. 
%
\end{Proposition}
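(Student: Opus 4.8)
The plan is to assemble the final Proposition directly from the machinery already developed, treating the two parts separately.

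\textbf{Part (i).} First I would invoke Proposition~\ref{Pr:strong_cont}, whose hypothesis is that $P_t$ maps $C_b(\R^N)$ into $BUC(\R^N)$ for every $t>0$. This is guaranteed here by Lemma~\ref{strong-Feller}: under Hypothesis~\ref{Hyp:LR}, $P_tf\in C^k_b(\R^N)$ for every $f\in B_b(\R^N)$, $t>0$, $k\in\N$, and the estimates \eqref{nth-derivatives1} show in particular that $P_tf$ has bounded first derivative, hence $P_tf\in BUC(\R^N)$. We have already noted that \eqref{LR} gives Hypothesis~\ref{Hyp:LR} with $\theta=1/(2s)$. Therefore Proposition~\ref{Pr:strong_cont} applies and yields $\overline{D(L)}=BUC(\R^N)\cap Y_0$, which in the notation of the present example is exactly $\{f\in BUC(\R^N):\lim_{t\to 0}\|f(e^{tA}\cdot)-f\|_\infty=0\}$ since $T_t=e^{tA}$. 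The final "so that" clause is just the restatement of Corollary~\ref{Cor:strong_cont}, which identifies $\overline{D(L)}$ with the subspace on which $P_tf\to f$ uniformly.

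\textbf{Part (ii).} Here I would apply Proposition~\ref{pr:basic2}. Its first hypothesis, Hypothesis~\ref{Hyp:LR}, holds with $\theta=1/(2s)$ as above. Its second hypothesis, the moment bound \eqref{moments}, is furnished for every $\gamma<2s$ by Lemma~\ref{Le:momentiOU}. Since $g_t(y)=g_t(-y)$, each $\mu_t$ is centered, so the stronger conclusion of Proposition~\ref{pr:basic2} is available whenever $\theta<1$, i.e. whenever $s>1/2$. I would then argue as follows: for $\alpha\in(0,1)\setminus\{1/(2s)\}$, pick any $\gamma<2s$ with $\alpha\le\gamma\theta=\gamma/(2s)$, which is possible precisely because $\alpha<1$ forces $\alpha/\theta=2s\alpha<2s$. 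If $\alpha<\theta$ we land in the range $(0,1\wedge\theta)\cap(0,\gamma\theta]$ covered by the first assertion of Proposition~\ref{pr:basic2}; if $\theta<\alpha<1$ (only possible when $\theta<1$, i.e. $s>1/2$) and since the measures are centered, we land in the range $(0,1\wedge 2\theta)\cap(0,\gamma\theta]$, $\alpha\ne\theta$, covered by the second assertion (note $\alpha<1\le 2\theta$ automatically when $\theta\ge 1/2$, and when $\theta>1$ there is nothing to prove beyond $\alpha<1$). Either way \eqref{caratt1} gives $(C_b(\R^N),D(L))_{\alpha,\infty}=C^{\alpha/\theta}_b(\R^N)\cap Y_\alpha=C^{2s\alpha}_b(\R^N)\cap Y_\alpha$, with equivalent norms. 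The final "so that" clause translates this via Proposition~\ref{Pr:LionsPeeetre}: the condition $\sup_{t>0}t^{-\alpha}\|P_tf-f\|_\infty<+\infty$ is exactly condition (ii) there (valid since $\beta=0\le 0$ for a contraction semigroup), equivalent to $f\in(C_b(\R^N),D(L))_{\alpha,\infty}$.

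\textbf{Main obstacle.} The only real bookkeeping is checking that the case $\alpha>\theta$ in part~(ii) genuinely fits inside the hypotheses of the centered-measures half of Proposition~\ref{pr:basic2}, i.e. verifying $\alpha<2\theta$ and $\alpha\le\gamma\theta$ for a suitable $\gamma<2s$, and handling the mild subtlety that when $\theta\ge 1$ (i.e. $s\le 1/2$) the range $\alpha>\theta$ inside $(0,1)$ is empty so only the first assertion is needed. Since $2s\alpha<2s$ always holds for $\alpha<1$, one can choose $\gamma$ with $2s\alpha<\gamma<2s$, so $\alpha<\gamma/(2s)=\gamma\theta$; and $\alpha<1\le 2\theta$ whenever $s\le 1$, which is our standing assumption. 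Thus every $\alpha\in(0,1)\setminus\{1/(2s)\}$ is covered, and no genuinely new argument beyond citing the established propositions is required.
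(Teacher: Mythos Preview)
Your proposal is correct and follows essentially the same route as the paper: part~(i) via Corollary~\ref{Cor:strong_cont} and Proposition~\ref{Pr:strong_cont}, part~(ii) via Propositions~\ref{Pr:LionsPeeetre} and~\ref{pr:basic2} together with Lemma~\ref{Le:momentiOU}. You simply spell out in more detail than the paper why the hypotheses of Proposition~\ref{pr:basic2} are met for every $\alpha\in(0,1)\setminus\{1/(2s)\}$, splitting into $\alpha<\theta$ and $\alpha>\theta$ and choosing an appropriate $\gamma\in[2s\alpha,2s)$; the paper compresses all of this into a one-line citation.
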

\begin{proof}
Statement (i) is a consequence of Corollary \ref{Cor:strong_cont} and Proposition \ref{Pr:strong_cont}. Statement (ii), with $\alpha \neq 1/(2s)$, follows from Propositions \ref{Pr:LionsPeeetre} and  \ref{pr:basic2}, since the assumptions of  Proposition \ref{pr:basic2} are satisfied with $\theta = 1/(2s)$ by the above discussion and  Lemma \ref{Le:momentiOU}. 
 \end{proof}

\begin{Remark}
{\em The results in this section hold also for $A=0$. In this case if we take $Q= 2^{s}I$ the operator $-L$ is just the realization of the fractional Laplacian $(-\Delta)^s$ in $C_b(\R^N)$. However the characterization of the interpolation spaces $D_L(\alpha, \infty)$ may be obtained for free from the general theory of powers of operators and the characterization for the Laplacian (the case of general $Q>0$  may be easily reduced to this one). }

{\em Indeed, for every Banach space $X$ and for every linear operator $T:D(T)\subset X\mapsto X$
such that $\sup_{\lambda >0}\lambda \|(\lambda I +T)^{-1}\|_{\mathcal L(X)}<+\infty$, we have the continuous embeddings  $(X, D(T))_{s, 1}\subset D(-T)^s
\subset (X, D(T))_{s, \infty}$ for every $s\in (0, 1)$. See \cite[Sect. 1.14.1]{T}, \cite[Prop. 4.7]{L1}; in both books it is assumed that $0\in\rho(T)$, but an inspection to the proof shows that this assumption is not essential. By the Reiteration Theorem we get $(X,  D(T)^s)_{\alpha, \infty} = (X, D(T))_{\alpha s, \infty}$ for every $\alpha\in (0,1)$, with equivalence of the respective norms. In our  case,  $T=-\Delta$,  we have (e.g., \cite[Thm. 3.1.12]{L})
$$(X, D(T))_{\theta, \infty} = \left\{ \begin{array}{ll}
C^{2\theta}_b(\R^N), & \theta\neq 1/2, 
\\
\\
Z^{1}_b(\R^N), & \theta = 1/2.  
\end{array}\right. $$
As a consequence, for every $\alpha \in (0, 1)$  we get
$$(X, D((-\Delta)^s)_{\alpha, \infty} = \left\{ \begin{array}{ll}
C^{2\alpha s}_b(\R^N), & \alpha \neq 1/(2s), 
\\
\\
Z^{1}_b(\R^N), &  \alpha = 1/(2s). 
\end{array}\right. $$}
\end{Remark}

 \section{Acknowledgements}
The Author is a member of GNAMPA-INdAM. 
Thanks are due to E. Priola  for suggestions about \S \ref{Sect.OU_diff_fraz}.


\begin{thebibliography}{99}

 \bibitem{Ap} {\sc D. Applebaum},  {\it Infinite dimensional Ornstein-Uhlenbeck processes driven by L\'evy processes}, Prob. Surveys {\bf 12} (2015), 33-54. 
 
\bibitem{ABP} {\sc S.A. Athreya, R.F. Bass, E.A. Perkins}, {\it H\"older norm estimates for elliptic operators on finite and infinite dimensional spaces}, Trans. Amer. Math. Soc. {\bf 357} (2005), 5001-5029. 

 
 \bibitem{AB} {\sc P. Alphonse, J. Bernier}, {\it Smoothing properties of fractional Ornstein-Uhlenbeck semigroups and null controllability}, Bull. Sci. Math. {\bf 165} (2020), 102914.    

 \bibitem{B}  {\sc J. Bertoin}, {\rm L\'evy Processes}, Cambridge Univ. Press, 1996.
  
 \bibitem{Boga} {\sc V.I. Bogachev}, {\rm Gaussian Measures}, American Mathematical Society, 1998.
 
\bibitem{BogaDiff} {\sc V.I. Bogachev}, {\rm Differentiable Measures and the Malliavin Calculus}, American Mathematical Society, 2010.

\bibitem{BRS} {\sc V.I. Bogachev, M. R\"ockner, B. Schmuland}, {\it Generalized Mehler semigroups and applications}, Prob. Th. Rel. Fields
{\bf 105} (1996), 193-225. 


\bibitem{C} {\sc S. Cerrai},  {\rm Second Order PDEs in Finite and Infinite Dimension. A probabilistic approach}, Lecture Notes in Mathematics, 1762. Springer-Verlag, Berlin, 2001.




\bibitem{ACM} {\sc A. Chojnowska--Michalik}, {\it On processes of Ornstein--Uhlenbeck type in Hilbert space}, Stochastics  {\bf 21} (1987), no. 3, 251-286. 



  
 \bibitem{DPC} {\sc G. Da Prato, P. Cannarsa}, {\it Infinite dimensional elliptic equations with H\"older continuous coefficients}, Adv. Diff. Equations {\bf 1} (1996), 425-452. 
  
 \bibitem{DPL} {\sc G. Da Prato, A. Lunardi}, {\it On the Ornstein-Uhlenbeck operator in spaces of continuous functions}, J. Funct. Anal. {\bf 131} (1995), 94-114. 

 \bibitem{DPZbrutto} {\sc G. Da Prato, J. Zabczyk},  Second Order Partial Differential Equations in Hilbert spaces, 
 Cambridge Univ. Press, Cambridge, 2002. 

 \bibitem{DL} {\sc D. A. Dawson, Z. Li}, {\it Skew convolution semigroups and affine Markov processes}, Ann. Probab. \textbf{34} (2006), no. 3, 1103-1142.

 \bibitem{DLScS} {\sc D. A. Dawson, Z. Li, B. Schmuland, W. Sun}, {\it Generalized Mehler semigroups and catalytic branching processes with immigration}, Potential Anal. \textbf{21} (2004), no. 1, 75-97.

  
 \bibitem{FR} {\sc M. Fuhrman, M. R\"ockner}, {\it Generalized Mehler semigroups: the non-Gaussian case}, Potential Anal. {\bf 12} (2000), 1-47. 

\bibitem{GVN} {\sc B. Goldys, J. Van Neerven}, {\it Transition semigroups of Banach space valued Ornstein-Uhlenbeck processes},  Acta Appl. Math. {\bf 76} (2003),  283-330. Revised version ArXiv 0606785v1. 


  
\bibitem{LR1} {\sc   P. Lescot, M. R\"ockner}, {\it Generators of Mehler type semigroups as pseudo-differential operators}, Infin. Dimens. Anal. Quantum Probab. Relat. Top. {\bf 5} (2002),  297-315. 
 
\bibitem{LR2} {\sc   P. Lescot, M. R\"ockner}, {\it Perturbations of generalized Mehler semigroups and applications to  stochastic heat equations with Levy noise and singular drift},  Potential Anal. {\bf 20} (2004), 317-344. 

\bibitem{ZL} {\sc Z. Li}, {\rm Measure-valued branching Markov processes}, Probability and its Applications (New York). Springer, Heidelberg, 2011.

\bibitem{LP} {\sc J.L.~Lions, J.~Peetre}: {\it Sur une classe d'espaces d'interpolation},
Publ. I.H.E.S. {\bf 19} (1964), 5--68.

\bibitem{L} {\sc  A. Lunardi}, {\rm  Analytic semigroups and optimal regularity in parabolic equations},  Birkh\"auser, Basel, 1995. Second Edition 2013. 
 
 \bibitem{L1} {\sc  A. Lunardi}, {\rm   Interpolation Theory. Third edition},    Lecture Notes. Scuola Normale Superiore di Pisa (New Series), 16. Edizioni della Normale, Pisa, 2018. 

 \bibitem{L0} {\sc  A. Lunardi}, {\it  Schauder estimates for a class of degenerate elliptic and parabolic operators with unbounded coefficients in $\R^n$}, 
 Ann. Sc. Norm. Sup. Pisa,  IV  {\bf 24} (1997), 133-164.

\bibitem{LR} {\sc   A. Lunardi, D. Pallara}, {\it   Ornstein-Uhlenbeck semigroups in infinite dimension}, Phil. Trans. A {\bf 378} (2020), 20190620. 

\bibitem{LR} {\sc   A. Lunardi, M. R\"ockner}, {\it Schauder theorems for a class of (pseudo-)differential operators on finite and infinite dimensional state spaces},  ArXiv:1907.06237. To appear in J. London Math. Soc. 



\bibitem{OR} {\sc S.-X. Ouyang, M. R\"ockner}, {\it Time inhomogeneous generalized Mehler semigroups and skew convolution equations}, Forum Math. \textbf{28} (2016), no. 2, 339-376.

\bibitem{ORW} {\sc S.-X. Ouyang, M. R\"ockner, F.-Y. Wang}, {\it Harnack inequalities and applications for Ornstein--Uhlenbeck semigroups with jump}, Potential Anal. \textbf{36} (2012), no. 2, 301-315.

\bibitem{P} {\sc E. Priola}, {\it Pathwise uniqueness for singular SDEs driven by stable processes}, Osaka J. Math. {\bf 49} (2012), 421-447. 

\bibitem{PvN} 
 {\sc  E. Priola, J. M. A. M. van Neerven},  
{\it  Norm discontinuity and spectral properties of Ornstein-Uhlenbeck semigroups}, J. Evol. Equ.  {\bf 5} (2005), 557-576. 

\bibitem{PZ} 
 {\sc  E. Priola, J. Zabcyzk}, {\it Densities for Ornstein-Uhlenbeck
processes with jumps},  Bull. Lond. Math. Soc. {\bf 41} (2009), 41-50.  

\bibitem{RW} {\sc  M. R\"ockner, F.-Y. Wang}, {\it Harnack and functional inequalities for generalized Mehler semigroups}, J. Funct. Anal. {\bf 203} (2003), 237-261. 



 \bibitem{S} {\sc K. Sato}, {\rm L\'evy processes and infinitely divisible distributions}, Cambridge Studies in Advanced Mathematics, 68. Cambridge University Press, Cambridge, 2013. 

\bibitem{SchSun}{\sc B. Schmuland, W. Sun}, {\it On the equation $\mu_{t+s}=\mu_s*T_s\mu_t$}, Statist. Probab. Lett. \textbf{52} (2001),  183-188.






 \bibitem{T} {\sc H. Triebel} {\rm Interpolation Theory, Function Spaces, Differential Operators}, North-Holland, Amsterdam, 1978. 



\bibitem{vNW} {\sc J. Van Neerven, L. Weis}, {\it Stochastic integration of functions with
values in a Banach space}, Studia Math. {\bf 166} (2005), 132-170. 




 \end{thebibliography}
\end{document}